\documentclass[11pt,twoside,reqno,centertags,
tikz,border=3mm]{amsart}

\usepackage{amsmath,amsthm,amsfonts,amssymb,amsrefs,
ascmac 
}
\usepackage{enumerate,color}



\usepackage{etoolbox} 


\pagestyle{myheadings}
\textwidth = 15cm
\textheight = 22cm
\voffset= -20 true pt
 \oddsidemargin=0true in
 \evensidemargin=0true in



\makeatletter

\@addtoreset{equation}{section}
\makeatother

\theoremstyle{plain} 
\newtheorem{thm}{Theorem}[section]
\newtheorem{lem}[thm]{Lemma}
\newtheorem{cor}[thm]{Corollary}


\theoremstyle{definition} 
\newtheorem{definition}[thm]{Definition}
\newtheorem{rem}[thm]{Remark}


\newcommand{\Z}{\mathbb {Z}}

\newcommand{\R}{\mathbb{R}}

\newcommand{\ep}{\varepsilon}




\newcommand{\supp}{\rm{supp}}

\def\div{ \hbox{\rm div}\,  }

\def\supp{\, \hbox{\rm supp}\,  }
\def\id{\hbox{\rm Id}}

\def\cC{{\mathcal C}}

\let\tilde=\widetilde




\newcommand{\eqsp}[1]{{\begin{equation}\begin{aligned}#1\end{aligned}\end{
equation}}}

\mathsurround 1pt

\title[Well-posedness and time-decay estimates of CNSK]{Global well-posedness and time-decay estimates  of the compressible Navier-Stokes-Korteweg system in critical Besov spaces}
\date\today

\author[N. Chikami]{Noboru Chikami}
\address[N. Chikami]
{Graduate School of Engineering Science, Osaka University,
1-3 Machikaneya-cho, Toyonaka, Osaka, 560-8531, Japan.}
\email{noboru.chikami@sigmath.es.osaka-u.ac.jp}

\author[T. Kobayashi]{Takayuki Kobayashi}
\address[T. Kobayashi]
{Graduate School of Engineering Science, Osaka University,
1-3 Machikaneyamacho, Toyonaka, Osaka, 560-8531, Japan.}
\email{kobayashi@sigmath.es.osaka-u.ac.jp}


\begin{document}

\begin{abstract}
We consider the compressible Navier-Stokes-Korteweg system 
describing the dynamics of a liquid-vapor mixture with diffuse interphase. 
The global solutions are established under linear stability conditions 
in critical Besov spaces. 
In particular, the sound speed may be greater than or equal to zero. 
By fully exploiting the parabolic property of the linearized system for all 
frequencies, we see that there is no loss of derivative usually induced by the 
pressure for the standard isentropic compressible Navier-Stokes system. 
This enables us to apply Banach's fixed point theorem to show the 
existence of global solution. 
Furthermore, we obtain the optimal decay rates of the global solutions 
in the $L^2(\R^d)$-framework.  
\end{abstract}
\maketitle


\section{Compressible Navier-Stokes-Korteweg system}
We consider the following barotropic compressible Navier-Stokes-Korteweg system endowed with an internal local capillarity:
\begin{equation}
\label{CNSKu}
\left\{\begin{aligned}
&\partial_t \rho + \div (\rho u) = 0, \\
&\partial_t (\rho u) + \div (\rho u\otimes u) + \nabla (P(\rho) )
    =  \div (\tau(\rho, \nabla u) + K(\rho)), \\
&(\rho,u)|_{t=0} = (\rho_0,u_0), 
\end{aligned}\right.
\end{equation} 
where $(t,x) \in \R_+ \times \R^d,$ $d\ge2$, 
$\rho = \rho(t,x)\in\R_+$ and $u=u(t,x)\in\R^d$ are unknowns 
representing the fluid density and the velocity vector field, respectively. 
The viscous strain tensor $\tau$, in general, is given by
\begin{equation*}
\tau = \tau(\rho, D u)= 2\mu(\rho) D(u) + \lambda(\rho) \div u \ \id , 
\end{equation*}
with $\id$ denoting an identity matrix and 
and $D(u)$ designating the deformation tensor defined by 
\begin{equation*}
D(u):=\frac{1}{2}(\nabla u+ \,^t (\nabla u)) \quad \text{with} \quad 
(\nabla u)_{ij} := \partial_i u^j,
\end{equation*}
where $\,^t (\nabla u)$ denotes the transposed matrix of $\nabla u.$
For simplicity we assume that 
the viscosity coefficients $\lambda$ and $\mu$ are given constants 
satisfying $\mu>0$ and $\lambda + 2 \mu>0$. 
We write $\mathcal{L}u = \div(2\mu Du) + \nabla (\lambda \div u)$.  
When $\mu$ and $\lambda$ are constants, then we see that 
$\div (\tau) = \mathcal{L}u = \mu \Delta u +(\mu+\lambda) \nabla \div u$.  
The pressure is assumed to be barotropic, i.e., 
$P$ is a sufficiently smooth function of $\rho$. 

In the general Korteweg system, 
the capillary term is given by $\div (K(\rho)),$ where 
$K(\rho)$ is the Korteweg tensor defined by 
\begin{equation}\nonumber
K(\rho) = \frac{\kappa(\rho)}{2} (\Delta \rho^2 - |\nabla \rho|^2) \id - \kappa(\rho) \nabla \rho \otimes \nabla\rho, 
\end{equation}
where $\nabla \rho \otimes \nabla\rho$ stands for the 
tensor product $(\partial_j \rho \, \partial_k \rho )_{jk}$.
The coefficient $\kappa$ may depend on $\rho$ in general. 
Here, we assume the case of positive constant coefficient 
$\kappa(\rho) \equiv \kappa$. 
We assume that there is no vacuum, and $\rho$ tends to a positive constant $\rho_*$ at spatial infinity. 

The system describes the dynamics of a liquid-vapor 
mixture in the setting of the Diffuse Interface approach: 
between the two phases lies a thin region of continuous transition 
and the phase changes are described through the variations of the density 
(with for example a {\sl Van der Waals pressure}). 
Historically, the system is first proposed by J. D. Van der Waals (1893), 
and later Korteweg tensor is introduced by D. J. Korteweg (1901).   
The system considered here is a version reformulated by J. E. Dunn and J. Serrin \cite{DunSer1985} in 1985. 

An important features of System \eqref{CNSKu} is that 
the pressure is non-monotone in general. 
Simply put, $P'(\rho_*)$ is no longer a positive number, 
which inevitably leads to linear instability for some value of $\rho_*.$ 
As such, most mathematical results (in the whole space) 
until now deal with the condition that $P'(\rho_*)$ is strictly positive, 
except for local-in-time results. 

There are numerous works dedicated to the study of the 
system with the capillarity term. 
For known well-posedness results for System \eqref{CNSKu}, 
global smooth solutions are established by \cite{HatLi1994, HatLi1996}. 
Decay estimates along with the analysis of diffusive wave phenomenon 
are considered by T. Kobayashi and K. Tsuda \cite{KobTsu2018}. 
The latter also obtains the existence of time-periodic solutions along with 
their stability in \cite{Tsu2016}. 
See also \cite{Kot2014, TanZha2014} for the existence and decay results. 
In the critical framework, R. Danchin and B. Desjardins 
\cite{DanDes2001} first obtain the well-posedness in critical Besov spaces. 
Especially, they consider a general non-monotone pressure to 
show local existence of the solution. 
For results on non-local capillary terms and convergence to various models, 
we refer to the works by F. Charve and B. Haspot 
\cite{ChaHas2011, ChaHas2013, Cha2014}. 
In \cite{TanGao2016}, T. Tang and H. Gao obtain 
the global solution for System \eqref{CNSKu} 
in $\mathbb{T}^d$ under a non-monotone pressure law. 

Two difficulties usually arise in the analysis of 
the compressible viscous fluids. One is the loss of derivative 
induced by the pressure term and the other is the appearance of the 
second order nonlinear term. Both of them preclude the application 
of standard semi-group theory and necessitate the laborious energy estimates. 
In contrast to the compressible viscous fluids without the capillary term, 
however, System \eqref{CNSKu} is known to have one mathematical upside, 
which is that there is no longer a loss of derivative due to the parabolic smoothing effect for both the velocity and the density. 

We may check that a {\sl scaling invariance} holds for System \eqref{CNSKu}:
namely a family of rescaled functions $(\rho,u)\to (\rho_{\nu},u_{\nu})$ with
\begin{equation}\label{CNSK:sc.inv}
\rho_{\nu} (t,x)=\rho(\nu^2 t,\nu x) \quad\text{and}\quad 
u_{\nu} (t,x)=\nu u(\nu^2 t,\nu x) 
\end{equation}
leaves \eqref{CNSKu} invariant provided that $P$ is changed into $\nu^2 P$. 
The pressure term, to some extent, can be regarded 
as a lower order term in our setting and therefore 
it is possible to solve the system in the {\sl scale-critical function spaces}, 
i.e., the spaces in which the norm is kept invariant under the transforms 
\eqref{CNSK:sc.inv}. 
Moreover, we clarify in this paper that it suffices to assume that the pressure 
is merely non-negative, i.e., $P'(\rho*)\ge0,$ in order to obtain global solutions 
in critical Besov spaces. 
We establish a priori estimates for the linearized system that is 
{\sl uniform} with respect to the parameter $P'(\rho_*)\ge0$ 
(See Lemma \ref{lCNSKp}). 
We also show the decay estimates of such global small solutions 
under additional $L^1(\R^d)$-type regularity assumption. 

We remark that although we focus our analysis in the 
$L^2(\R^d)$-setting, it is not difficult to extend our decay results 
to the $L^p(\R^d)$-framework in the spirit of \cite{DanXu2017}. 
In fact, F. Charve, R. Danchin and J. Xu 
have recently shown the existence of solution for System \eqref{CNSK} in 
$L^p(\R^d)$-framework along with the Gevrey analyticity 
of the solution \cite{ChDaXu2018}. As a corollary of the analyticity, they obtain 
decay estimates of higher order norms with respect to a critical data. 
Note that contrary to \cite{ChDaXu2018} 
our decay estimates deal with the {\sl decay rates of critical 
norms with respect to a $L^1(\R^d)$ data}, which naturally 
requires more labour and cannot be deduced from their results. 
Furthermore, our results for the critical case $P'(\rho_*)=0$ 
seems completely new. 

\subsection{Notations and function spaces}
Hereafter, we denote by $L^p(\R^d),$ $1\le p\le\infty,$ 
the standard Lebesgue spaces on 
the $d$-dimensional Euclidean space $\R^d$, 
and by $\ell^p(\Z)$ the set of sequences with summable $p$-th powers. 
Unless otherwise stated, we assume $d\ge2$ throughout the paper. 
We abbreviate $\|\cdot\|_{L^p(\R^d)} = \|\cdot\|_{L^p}$ 
whenever the dimension $d$ is not relevant, 
and denote the time-space Lebesgue norm by 
$\|\cdot\|_{L^q(0,T;L^p)} = \|\cdot\|_{L^q_TL^p}.$ 
An infinitely differentiable, complex-valued function $f$ on $\R^d$ 
is called a Schwartz function if for every pair of multi-indices $\alpha$ 
and $\beta$ there exists a positive constant $C_{\alpha,\beta}$ such that 
$\sup_{x\in\R^d} |x^\alpha\partial^\beta f| = C_{\alpha,\beta} < \infty.  $
We denote the set of all Schwartz functions on $\R^d$ by $\mathcal{S}(\R^d)$
and its topological dual by $\mathcal{S}'(\R^d).$ Elements in $\mathcal{S}'(\R^d)$
are called tempered distributions. 
We define the fractional derivative operator by 
\begin{equation}\nonumber
\Lambda^s := (-\Delta)^{s/2} = \mathcal{F}^{-1} |\cdot|^s \mathcal{F}
\end{equation}
for $s\in\R,$ where $\mathcal{F}u = \widehat u$ denotes the Fourier transform of $u \in \mathcal{S}'(\R^d).$

We define a subspace of $\mathcal{S}'(\R^d)$ which lays a basic foundation of our analysis. For the details, we refer to \cite{BCD2011}. 
\begin{definition}[\cite{BCD2011} P22]
\label{d:S_h}
We denote by $\mathcal{S}'_h(\R^d)$ the space of tempered distributions $u$ such that 
$ \displaystyle
\lim_{\lambda\to\infty} \|\mathcal{F}^{-1} [\theta(\lambda \cdot) \widehat u]\|_{L^\infty} = 0 
$
for any $\theta$ in $\mathcal{S}(\R^d).$
\end{definition}

In what follows, we denote by $C$ a generic constant that may change from line to line. 
We write $A\cong B$ whenever there exists a positive constant $c$ such that $c^{-1} A \le B \le c A.$ 
We also denote $A\lesssim B$ whenever there exists arbitrary harmless constant 
$C$ such that $A \le C B$.
Constants depending on parameters $a,b,\ldots,c$ are denote by $C(a,b,\ldots,c)$. 
We agree that $p'$ always denotes the H\"older conjugate of $p,$ i.e., $\frac1{p}+\frac1{p'}=1,$ with the convention $\frac{1}{\infty}=0.$ 
Vectors in this article are understood as column vectors in general; 
however, we omit the sign of transpose $^t$ whenever it is clear from 
the context or simply irrelevant. 
We express a function $u$ of variables $(t,x)$ as $u$, 
$u(t)$ or $u(t,x)$ depending on circumstances. 

\subsubsection{Besov spaces}
Let $\{ \phi_j \}_{j\in \Z}$ be a homogeneous Littlewood-Paley dyadic decomposition. 
Namely, let $\widehat\phi \in \mathcal{S}(\R^d)$ be a non-negative, radially symmetric function that satisfies
\[
\supp \widehat{\phi} \subset \{ \xi \in \R^d; 2^{-1} \le |\xi |\le 2\}
\quad
\text{and} \quad \sum_{j\in \Z} \widehat{\phi}(2^{-j}\xi) = 1 
\quad\text{for all} \quad \xi\neq 0.
\]
Setting $\widehat{\phi_j} (\xi) :=\widehat{\phi} (2^{-j}\xi)$
and $\displaystyle \widehat{\Phi_{j-1}}(\xi):= 1 - \sum_{k \ge j} \widehat{\phi_k} (\xi)$
for $j\in \Z$, we have
\begin{equation}\nonumber
\supp \widehat{\Phi_{j-1}} 
      \subset\{\xi\in\R^d \,;\, |\xi| \le 2^j \} 
  \quad\text{and}\quad
  \supp \widehat{\phi_j} 
      \subset\{\xi\in\R^d \,;\, 2^{j-1} \le |\xi| \le 2^{j+1} \}
\end{equation}
for all $j\in\Z$. 
Given an element $u$ in $\mathcal{S}'_h(\R^d)$ and a (homogeneous) Littlewood-Paley decomposition $\{\phi_j\}_{j\in\Z},$ 
we define the (homogeneous) Littlewood-Paley localization operator by 
\begin{equation}\label{def:h-LP-proj}
\dot \Delta_j u:= \mathcal{F}^{-1}[\widehat\phi_j \widehat u] 
\end{equation}
and the frequency cut-off operator by 
$\displaystyle \dot S_j u :=  \sum_{j'\le j} \Delta_{j'} u = \mathcal{F}^{-1} [\widehat\Phi_j \widehat u]$ for $j\in\Z.$ 

\begin{definition}[Homogeneous Besov space]
For $s\in\R$ and $1\le p, \sigma \le \infty$ we define the homogeneous Besov spaces $\dot{B}^{s} _{p,\sigma}(\R^d)$ as follows:
\begin{equation}\nonumber
\dot{B}^s_{p,\sigma}(\R^d) := \{u\in \mathcal{S}'_h(\R^d)  \ ; \ \|u \|_{\dot{B}^{s}_{p,\sigma}(\R^d)}<\infty\}, \quad
\|u \|_{\dot{B}^{s}_{p,\sigma}(\R^d)} 
:= \left\| \left\{2^{js} \| \dot\Delta_j u\|_{L^p}\right\}_{j\in\Z} \right\|_{\ell^\sigma}.
\end{equation}
For notational convenience, we abbreviate 
$\|\cdot\|_{\dot{B}^{s}_{p,\sigma}(\R^d)} = \|\cdot\|_{\dot{B}^{s}_{p,\sigma}}.$
\end{definition}
More properties of Besov spaces may be found in e.g. \cites{BCD2011}.
\medbreak
For some fixed $j_0\in\Z,$ we
denote by $\displaystyle u_h := \dot S_{j_0} u$ the low frequencies of $u,$  
 and by $u_h:=u-u_h$ the high frequencies  of $u.$ 
We also need the notation
\begin{equation}\label{eq:LH1}
\|u\|^l_{\dot B^s_{2,1}}:= \sum_{j\leq j_0} 2^{js}\|\dot\Delta_ju\|_{L^2}
\quad\hbox{and}\quad
\|u\|^h_{\dot B^\sigma_{2,1}}:= \sum_{j\geq j_0-1} 2^{j\sigma}\|\dot\Delta_j u\|_{L^2}.
\end{equation}
The small overlap between low and high frequencies are to ensure that 
\begin{equation}\nonumber
\|u_l\|_{\dot B^s_{2,1}}\leq C \|u\|^l_{\dot B^s_{2,1}}\quad\hbox{and}\quad
\|u_h\|_{\dot B^\sigma_{2,1}}\leq C \|u\|^h_{\dot B^\sigma_{2,1}}.
\end{equation}

Lastly, we introduce the so-called Chemin-Lerner spaces. 
\begin{definition}[Chemin-Lerner spaces, \cites{ChLe95, BCD2011}]
\label{d:CheLer.sp}
For $t_1,t_2>0$, $s\in\R$, $1\le \sigma,\rho\le \infty$, we set 
$$
\|u\|_{\widetilde{L^\rho}(t_1,t_2 ; \dot B^s_{p,\sigma})}
:= \left\| \{2^{js} 
\|\dot\Delta_j u\|_{L^\rho(t_1,t_2;L^p)} \}_{j\in\Z} \right\|_{\ell^{\sigma}}.
$$
We then define the space $\widetilde{L^\rho}(t_1,t_2;\dot B^s_{p,\sigma}(\R^d))$ 
as the set of tempered distributions $u$ over 
$(t_1,t_2)\times \R^d$ such that 
$\lim_{j\to -\infty} \dot S_j u = 0$ in $\mathcal{S}'((t_1,t_2)\times\R^d)$
and $\|u\|_{\widetilde{L^\rho}(t_1,t_2;\dot B^s_{p,\sigma})} < \infty$. 
When $t_1=0$ and $t_2 = T,$ we denote 
$\widetilde{L^\rho_T}(\dot B^s_{p,\sigma}(\R^d))=\widetilde{L^\rho}(0,T; \dot B^s_{p,\sigma}(\R^d))$ with the shortened expression of the norm $\|\cdot\|_{\widetilde{L^\rho_T}(\dot B^s_{p,\sigma})}.$
\end{definition}
In what follows, we agree that
\begin{equation}\label{d:tldC.sp}
\widetilde\cC([0,T];\dot B^s_{2,1}):=\bigl\{v\in\cC([0,T];\dot B^s_{2,1})\:; \|v\|_{\widetilde{L^\infty_T}(\dot B^{s}_{2,1})}<\infty\bigr\}\cdotp
\end{equation}
\subsection{Momentum formulation and main results}
Setting $m :=\rho u,$ we recast System \eqref{CNSKu} in 
the momentum formulation, which reads as follows:
\begin{equation}\label{CNSK}\tag{CNSK}
\left\{\begin{aligned}
&\partial_t \rho + \div m = 0,\\
&\partial_t m + \div(\rho^{-1} m\otimes m) + \nabla P
    =  \mathcal{L}(\rho^{-1} m) +  \div(K(\rho)), \\
&(\rho,m)|_{t=0} = (\rho_0,m_0).
\end{aligned}\right.
\end{equation}
The clear difference from the velocity formulation is that now the 
mass conservation law is linear while the capillary term is nonlinear. 
Also notable is that there is $\rho^{-1}$ inside the second-order term, 
thereby requiring more regularity to the density {\sl a priori}. 
This poses no obstacle in the actual analysis since, as is well-known, 
there is a parabolic smoothing effect for the density in System \eqref{CNSK}. 
In this paper, we perform our analysis in the momentum formulation. 
In fact, the momentum formulation turns out to be 
essential for our analysis for the case $\gamma=0$; 
Particularly, the fact the the nonlinear terms may be written in divergence form 
greatly helps our analysis in $d=3,$ as we see in the proof of Lemma \ref{l:CNSK.nonl.est.tld}. 

\subsubsection{Global existence}
Throughout this paper, we set $\gamma:= P'(\rho_*).$ 
We first state a result of well-posedness for $d\ge2$ and $\gamma\ge0,$
which is contained in \cite{DanDes2001}. 
Under smallness of the initial data, the solution 
is globally extended when $\gamma>0$. 
\begin{thm}[\cite{DanDes2001}]\label{t:L2.WP}
Let $d\ge 2$ and let $P$ be a smooth function such that $\gamma\ge0$. 
For sufficiently small initial data such that 
$$
((\gamma+\Lambda)(\rho_0-\rho_*), m_0) \in \dot{B}^{\frac{d}{2}-1}_{2,1}(\R^d),
$$ 
there exists a time $T>0$ and a unique 
solution $(\rho,m)$ to \eqref{CNSK} satisfying 
\begin{equation}\nonumber
((\gamma+\Lambda) (\rho-\rho_*), m)\in \widetilde{\cC}([0,T];\dot B^{\frac{d}{2}-1}_{2,1}(\R^d)) 
		\cap L^1(0,T;\dot B^{\frac{d}{2}+1}_{2,1}(\R^d)).
\end{equation}
Moreover, if $\gamma>0$ then $T=\infty$ and there exists a constant $C$ 
depending on $d,$ $\mu,$ $\lambda,$ $\rho_*$ and $P$ such that  
\begin{equation}\nonumber\begin{aligned}
\|((\gamma+\Lambda) (\rho-\rho_*),  m)
	\|_{\widetilde{L^\infty}(0,\infty;\dot{B}^{\frac{d}{2}-1}_{2,1})}
&+ \|\Lambda^2 ((\gamma+\Lambda) (\rho-\rho_*),  m)
	\|_{L^1(0,\infty;\dot{B}^{\frac{d}{2}-1}_{2,1})} \\
&\quad\le C \|((\gamma+\Lambda)(\rho_0-\rho_*), m_0)\|_{\dot{B}^{\frac{d}{2}-1}_{2,1}}. 
\end{aligned}\end{equation}
\end{thm}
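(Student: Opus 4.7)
The plan is to follow the Danchin-type paradigm for critical Besov well-posedness: rewrite the system in perturbation variables, establish maximal-regularity estimates for the linearization around $(\rho_*,0)$ via a Lyapunov functional at the frequency-localized level, and close a Banach fixed-point argument in a well-chosen resolution space. I will work throughout in the momentum formulation \eqref{CNSK}.

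First, set $a := \rho - \rho_*$ and rewrite \eqref{CNSK} as
\begin{equation*}
\left\{\begin{aligned}
&\partial_t a + \div m = 0,\\
&\partial_t m - \frac{1}{\rho_*}\cL m + \gamma \nabla a - \kappa \rho_* \nabla \Delta a = F(a,m),
\end{aligned}\right.
\end{equation*}
where $F(a,m)$ collects the quadratic-and-higher source terms from the convective part, the variable-coefficient viscous term, the pressure deviation $\nabla(P(\rho) - P(\rho_*) - \gamma a)$, and the nonlinear part of $\div K(\rho)$. The key algebraic feature is that in Fourier variables the operator $\gamma\nabla - \kappa\rho_*\nabla\Delta$ carries the symbol $i\xi(\gamma + \kappa\rho_*|\xi|^2)$, so the natural coupling between density and momentum is governed by the multiplier $\gamma + \Lambda^2$, which motivates the use of $(\gamma+\Lambda)a$ as the natural density unknown in the statement.

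For the linear a priori estimate — essentially the content of the announced Lemma \ref{lCNSKp} — I would perform a frequency-localized energy computation: apply $\dot\Delta_j$, pair the density equation with $(\gamma - \kappa\rho_*\Delta)\dot\Delta_j a$ and the momentum equation with $\dot\Delta_j m$, and then add a suitably small cross term of the form $\int \dot\Delta_j m\cdot\nabla\dot\Delta_j a\,dx$. The cross term exploits the antisymmetric coupling between the two equations to generate a damping contribution comparable to $2^{2j}\|(\gamma+\Lambda)\dot\Delta_j a\|_{L^2}^2$, while the viscous term supplies the dissipation $2^{2j}\|\dot\Delta_j m\|_{L^2}^2$. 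Summation with weights $2^{j(d/2-1)}$ in $\ell^1$ then yields the required critical maximal-regularity bound. The nonlinear terms in $F$ are controlled by standard product laws and composition estimates in $\dot B^{d/2}_{2,1}$, which is a critical algebra embedded in $L^\infty$: each term factorizes as a product in which at least one factor is small in the resolution space, allowing an absorption into the linear part.

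The main obstacle is arranging the Lyapunov estimate with the correct uniform dependence on $\gamma$. One has to track both $\gamma$ and the dyadic frequency $2^j$ through the cross-term computation and show that the mixing factor $(\gamma+\Lambda)$ diagonalizes the linear flow consistently across the Besov scale: at high frequencies the full parabolic character of $(a,m)$ is dictated by the capillarity and kills the derivative loss that would otherwise be induced by the pressure; at low frequencies the linearized flow behaves like a strongly damped wave system whenever $\gamma>0$, which furnishes the global-in-time integrability needed to run the fixed-point argument on the whole half-line. Once these ingredients are in place, local existence for any $\gamma\ge0$ follows from a standard iteration on a small time interval, and the global extension under smallness together with $\gamma>0$ follows by combining the linear maximal regularity on $(0,\infty)$ with the nonlinear bounds and a continuity argument.
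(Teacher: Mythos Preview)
Your proposal is correct and follows essentially the same route as the paper: a frequency-localized Lyapunov computation for the linearized system (the paper carries it out in Fourier variables on the compressible component $v=\Lambda^{-1}\div m$ after a Helmholtz split, but your physical-space pairing with $(\gamma-\kappa\rho_*\Delta)\dot\Delta_j a$ and the cross term $\int\dot\Delta_j m\cdot\nabla\dot\Delta_j a$ is the Plancherel-equivalent description), followed by standard product/composition estimates in $\dot B^{d/2}_{2,1}$ and a contraction/fixed-point argument in the resolution space $CL_T$. The only cosmetic difference is that the paper closes directly via a Banach fixed-point lemma tailored to bi- and tri-linear maps (Lemma~\ref{l:fxd.pt.bi.tri.lin}), allowing $T=\infty$ immediately when $\gamma>0$, whereas you phrase the global step as a continuation argument; both are standard and equivalent here.
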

\begin{rem}
As observed in \cite{DanDes2001}, as far as a local solution is concerned, 
the smallness of the initial data and the condition $\gamma\ge0$ 
may be removed. 
\end{rem}

The question remains whether we may obtain a global solution 
when the sound speed is zero, i.e., when $\gamma = 0.$ 
It turns out that restricting the dimension to $d\ge3$, we have the global 
well-posedness for the case under an additional 
low-frequency assumption. 
\begin{thm}\label{t:L2.WP.szero}
Let $d\ge 3$ and let $P$ be a smooth function such that $\gamma=0$. 
For sufficiently small initial data such that 
$$
(\Lambda  (\rho_0-\rho_*), m_0) 
\in \dot{B}^{\frac{d}{2}-3}_{2,1}(\R^d)\cap \dot{B}^{\frac{d}{2}-1}_{2,1}(\R^d),
$$ 
there exists a unique solution $(\rho,m)$ to \eqref{CNSK} satisfying 
\begin{equation}\nonumber
(\Lambda (\rho-\rho_*), m)
\in \widetilde{\cC}([0,\infty);\dot{B}^{\frac{d}{2}-3}_{2,1}(\R^d)\cap
			\dot B^{\frac{d}{2}-1}_{2,1}(\R^d)) 
		\cap \widetilde{L^1}(0,\infty;
		\dot{B}^{\frac{d}{2}-1}_{2,1}(\R^d)\cap 
		          \dot B^{\frac{d}{2}+1}_{2,1}(\R^d)).
\end{equation}
Moreover, there exists a positive constant $C$ 
depending on $d,$ $\mu,$ $\lambda,$ $\rho_*$ and $P$ such that  
\begin{equation}\nonumber\begin{aligned}
\|(\Lambda (\rho-\rho_*),  m)
	\|_{\widetilde{L^\infty}(0,\infty;\dot{B}^{\frac{d}{2}-1}_{2,1})}
&+ \|\Lambda^2 ((\Lambda (\rho-\rho_*),  m)
	\|_{\widetilde{L^1}(0,\infty;\dot{B}^{\frac{d}{2}-1}_{2,1})} \\
&\quad\le C \|((\Lambda (\rho_0-\rho_*), m_0)\|_{\dot{B}^{\frac{d}{2}-1}_{2,1}}. 
\end{aligned}\end{equation}
\end{thm}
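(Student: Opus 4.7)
The plan is to prove Theorem \ref{t:L2.WP.szero} by a Banach fixed point argument in a hybrid Besov framework that compensates, at low frequencies, for the absence of the pressure-induced smoothing when $\gamma=0$. Define the resolution space
\begin{equation*}
E := \Bigl\{(\rho,m) \, : \, (\Lambda(\rho-\rho_*), m) \in \widetilde{\cC}\bigl([0,\infty);\dot B^{\frac d2-3}_{2,1}\cap\dot B^{\frac d2-1}_{2,1}\bigr)\cap\widetilde{L^1}\bigl(0,\infty;\dot B^{\frac d2-1}_{2,1}\cap\dot B^{\frac d2+1}_{2,1}\bigr)\Bigr\},
\end{equation*}
endowed with the natural norm $\|\cdot\|_E$ combining these four pieces. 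The low-index piece $\dot B^{\frac d2-3}_{2,1}$ provides the extra low-frequency decay needed in the $\gamma=0$ regime; the choice $d\ge3$ ensures that this low index leaves us within Besov product and composition rules.

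First I would fix $(\rho,m)\in E$ in a small ball, set $a:=\rho-\rho_*$ and solve the linearized problem
\begin{equation*}
\left\{\begin{aligned}
&\partial_t\widetilde a + \rho_*^{-1}\div \widetilde m = F(\rho,m),\\
&\partial_t\widetilde m - \mathcal{L}(\rho_*^{-1}\widetilde m) + \rho_*\,\kappa\,\nabla\Delta \widetilde a = G(\rho,m),\\
&(\widetilde a,\widetilde m)|_{t=0}=(\rho_0-\rho_*, m_0),
\end{aligned}\right.
\end{equation*}
with $\gamma=0$ and $F,G$ the nonlinear remainders from \eqref{CNSK}. Lemma \ref{lCNSKp}, applied separately at regularity $\frac d2-3$ and $\frac d2-1$, supplies a priori estimates that are \emph{uniform in} $\gamma\ge0$ (this uniformity, emphasized by the authors, is precisely what allows the argument to pass to the critical case). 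Summing both endpoints yields a bound of the solution map in $\|\cdot\|_E$ by the data plus the nonlinear norms.

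Next I would control $F$ and $G$ in $\widetilde{L^1}_t(\dot B^{\frac d2-3}_{2,1})$ and $\widetilde{L^1}_t(\dot B^{\frac d2-1}_{2,1})$. Writing $\rho^{-1}=\rho_*^{-1}+h(a)a$ with $h$ smooth, $h(0)=-\rho_*^{-2}$, all nonlinearities are of the schematic form $\div(h(a) m\otimes m)$, $\mathcal{L}\bigl(h(a)a\,m\bigr)$, and the capillary remainder $\div(K(\rho)-K(\rho_*))$, which is a sum of terms of the type $\nabla(|\nabla a|^2)$ and $\div(\nabla a\otimes\nabla a)$. Each of these has an outer divergence that can be placed on the lowest-regularity side: at the high-frequency index this yields standard tame/algebra estimates in $\dot B^{\frac d2-1}_{2,1}$ via Bony's paraproduct together with composition bounds for $h$; at the low-frequency index $\frac d2-3$, the same divergence structure, together with the product rule
\begin{equation*}
\|fg\|_{\dot B^{\frac d2-3}_{2,1}} \lesssim \|f\|_{\dot B^{\frac d2-1}_{2,1}}\|g\|_{\dot B^{\frac d2-1}_{2,1}}
\quad\text{for } d\ge 3,
\end{equation*}
closes the estimates (the restriction $d\ge 3$ enters here to keep the exponent $\frac d2-3$ within the admissible range for Besov multipliers). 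Interpolation between $L^\infty_t$ and $L^1_t$ in the Chemin--Lerner scale provides the missing intermediate norms.

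With these two ingredients, the self-map is a contraction on a small enough ball of $E$; Banach's fixed point theorem then yields a unique global $(\rho,m)$ in $E$, and the final global norm inequality is an immediate consequence of the linear estimate applied to the solution itself. The main obstacle I expect is the low-frequency nonlinear estimate in $\widetilde{L^1}_t(\dot B^{\frac d2-3}_{2,1})$: without the pressure to regularize low frequencies, the argument hinges on using the momentum (divergence) form of every nonlinear term and on the parabolic gain of two full derivatives produced by $\kappa\nabla\Delta a$ and $\mathcal Lm$ \emph{uniformly across all frequencies}, which is exactly the point isolated in Lemma \ref{lCNSKp}.
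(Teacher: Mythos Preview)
Your strategy is essentially the paper's: work in the hybrid space $\widetilde{CL}_T$ combining regularity $\frac d2-3$ and $\frac d2-1$, apply the linear estimate (Lemma~\ref{l:lFH}/Corollary~\ref{c:lL2}) at both indices, bound the nonlinearity $N$ in $\widetilde{L^1_t}(\dot B^{\frac d2-3}_{2,1}\cap\dot B^{\frac d2-1}_{2,1})$ using divergence structure plus product rules, and close by the fixed-point Lemma~\ref{l:fxd.pt.bi.tri.lin}.

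Two points to correct. First, your displayed product rule is off by one: with $s_1=s_2=\frac d2-1$ the product lands in $\dot B^{\frac d2-2}_{2,1}$, not $\dot B^{\frac d2-3}_{2,1}$; the outer divergence then brings you down to $\frac d2-3$. The paper in fact uses asymmetric pairings such as $s_1=\frac d2$, $s_2=\frac d2-2$ (see \eqref{l:CNSK.nonl.est.tld.pr1}--\eqref{l:CNSK.nonl.est.tld.pr5}), and the constraint $d\ge3$ enters precisely so that the product rule at level $\frac d2-2$ is valid. Second, your schematic list of nonlinearities omits the pressure, which is the whole point of the $\gamma=0$ case: since $P'(\rho_*)=0$, one writes $\nabla P(\rho)=\nabla\bigl(a\,\widetilde P(a)\bigr)$ with $\widetilde P(0)=0$, and this divergence form is what lets the low-index estimate close (cf.\ \eqref{l:CNSK.nonl.est.tld.pr5}); the extra $\widetilde{L^1_t}(\dot B^{\frac d2-1}_{2,1})$ control on $\Lambda a$ coming from the low-index part of $\widetilde{CL}_T$ is exactly what supplies the missing $\|b\|_{\widetilde{L^1_t}(\dot B^{d/2}_{2,1})}$ factor in the $\dot B^{\frac d2-1}_{2,1}$ estimate of that term. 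Finally, in the momentum formulation the mass equation $\partial_t a+\div m=0$ is already linear, so there is no $F$ on its right-hand side and no $\rho_*^{-1}$ factor.
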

\begin{rem}
In both Theorem \ref{t:L2.WP} and Theorem \ref{t:L2.WP.szero}, 
the space $\widetilde{\cC}(0,T;\dot{B}^{s}_{2,1}(\R^d)$ is defined by 
Definition \ref{d:tldC.sp}. 
\end{rem}
Let us remark that although we focus our analysis for the barotropic case, 
our technique can be also applied to non-isentropic cases. 
The results can also be naturally extended 
in the settings of torus and variable capillarity and viscosity coefficients. 

\subsubsection{Decay estimates}
Next, we clarify the decay rates of the solution constructed by 
Theorem \ref{t:L2.WP} and Theorem \ref{t:L2.WP.szero}. 

We first state a result for the case $\gamma>0.$  
Setting $\langle t\rangle := (1+t),$ we introduce an auxiliary norm 
\begin{multline}\label{d:CNSK.Dt}
D(t) := \sup_{s\in(-\frac{d}{2},\frac{d}{2}+1]} 
\| \langle\tau\rangle^{\frac{1}{2}(s+\frac{d}{2})}
((\gamma+\Lambda)(\rho-\rho_*), m)\|_{L^\infty_t(\dot B^{s}_{2,1})}^l \\
 + \| \tau^{\alpha}
\Lambda^2 ((\gamma+\Lambda)(\rho-\rho_*), m)\|_{\widetilde{L^\infty_t}(\dot B^{\frac{d}{2}-1}_{2,1})}^h. 
\end{multline}
Here and in what follows, we agree that the notation $\langle\tau\rangle^\sigma f,$ $\sigma\in\R,$ 
designates the function $(\tau,x)\mapsto \langle\tau\rangle^\sigma f(\tau,x).$ 
The notations $\|\cdot \|_{L^\infty_t(\dot B^{s}_{2,1})}^l$ and 
$\|\cdot \|_{\widetilde{L^\infty_t}(\dot B^{\frac{d}{2}-1}_{2,1})}^h$ 
are defined analogously to \eqref{eq:LH1}. 

\smallbreak
Under additional assumptions on the low frequencies of the  initial data,
one may obtain time-decay estimates that are reminiscent of 
those of the standard compressible Navier-Stokes equations 
(see \cite{DanXu2017}). 
\begin{thm}\label{t:CNSK.dcy}
Let $\gamma>0.$ 
Let the data $(a_0,m_0)$ satisfy the assumptions of Theorem \ref{t:L2.WP}. 
There exist a frequency-threshold $j_0$ only depending on $d,$ 
$\mu,$ $\lambda,$ $\kappa$ and $\gamma$ and 
a positive constant $c$ so that if in addition 
\begin{equation}\nonumber
D_0:=   \|\rho_0-\rho_*\|_{\dot B^{-\frac{d}{2}}_{2,\infty}}^l
	+ \|m_0\|_{\dot B^{-\frac{d}{2}}_{2,\infty}}^l \le c
\end{equation}
then the global solution $(a,m)$ given by Theorem \ref{t:L2.WP} 
satisfies for all $t\ge0,$
\begin{equation}\label{t:CNSK.dcy:smllsol}
D(t) \le 
C\bigl(  D_0+ \|(\Lambda (\rho_0-\rho_*),m_0)\|_{\dot B^{\frac{d}{2}-1}_{2,1}}^h \bigr)
\end{equation}
where $D(t)$ is defined by \eqref{d:CNSK.Dt} and 
$\alpha := \frac{d}{2}+\frac{1}{2}-\varepsilon$ for sufficiently small $\ep>0.$ 
\end{thm}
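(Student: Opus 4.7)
The strategy follows the low/high frequency decomposition developed by Danchin and Xu \cite{DanXu2017} for the barotropic compressible Navier-Stokes equations, adapted to the fully parabolic structure of \eqref{CNSK}. Set $a:=\rho-\rho_*$ and $U:=((\gamma+\Lambda)a,m)$, and rewrite \eqref{CNSK} as $\partial_tU+\mathcal{A}U=\mathcal{N}(U)$, where $\mathcal{A}$ is the linearized operator analyzed in Lemma \ref{lCNSKp} and $\mathcal{N}(U)$ collects the quadratic contributions from $\div(\rho^{-1}m\otimes m)$, from $\nabla(P(\rho)-P(\rho_*)-\gamma a)$, from $\mathcal{L}((\rho^{-1}-\rho_*^{-1})m)$ and from the quadratic part of $\div K(\rho)$. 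By Lemma \ref{lCNSKp}, the semigroup $e^{-t\mathcal{A}}$ enjoys a heat-like decay at low frequencies, $\|\dot\Delta_je^{-t\mathcal{A}}V_0\|_{L^2}\lesssim e^{-c2^{2j}t}\|\dot\Delta_jV_0\|_{L^2}$ for $j\le j_0$, and a spectral-gap type exponential decay at high frequencies.

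\emph{Step 1 (low frequencies).} Apply Duhamel's formula to each block $\dot\Delta_jU$ with $j\le j_0$. Using $\|\dot\Delta_jU_0\|_{L^2}\lesssim 2^{jd/2}D_0$, the initial-data contribution is
\[
\sum_{j\le j_0}2^{js}e^{-c2^{2j}t}\|\dot\Delta_jU_0\|_{L^2}\lesssim D_0\sum_{j\le j_0}2^{j(s+d/2)}e^{-c2^{2j}t}\lesssim D_0\langle t\rangle^{-(s+d/2)/2},
\]
uniformly in $s\in(-d/2,d/2+1]$. For the Duhamel remainder, one applies Bony's paraproduct decomposition and the product and composition estimates in Besov spaces to write each term of $\mathcal{N}(U)$ as products whose factors are controlled by quantities encoded in $D(t)$ and in the global bound of Theorem \ref{t:L2.WP}. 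The momentum formulation is essential here since every nonlinear term is in divergence form, allowing one to extract one derivative from $e^{-(t-\tau)\mathcal{A}}$ and thereby to gain an extra $\langle t-\tau\rangle^{-1/2}$; the resulting time integral then decays exactly like $\langle t\rangle^{-(s+d/2)/2}$, and the nonlinear contribution is $\lesssim D(t)^2$.

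\emph{Step 2 (high frequencies) and bootstrap.} Perform an $L^2$ energy estimate on each dyadic block $\dot\Delta_jU$ with $j\ge j_0-1$. After desymmetrising, multiplying by $\tau^\alpha$, summing $2^{j(d/2+1)}$ times the resulting inequality and integrating in time, one obtains
\[
\|\tau^\alpha\Lambda^2U\|^h_{\widetilde{L^\infty_t}(\dot B^{d/2-1}_{2,1})}\lesssim\|\Lambda U_0\|^h_{\dot B^{d/2-1}_{2,1}}+\alpha\|\tau^{\alpha-1}\Lambda U\|^h_{L^1_t(\dot B^{d/2+1}_{2,1})}+\|\tau^\alpha\mathcal{N}(U)\|^h_{L^1_t(\dot B^{d/2+1}_{2,1})}.
\]
The first remainder is split into $\tau\in[0,1]$ (handled by the global bound of Theorem \ref{t:L2.WP}) and $\tau\ge1$ (interpolated between the low-frequency decay of $U$ contained in $D(t)$ and the high-frequency integrability $\Lambda^2U\in L^1_t(\dot B^{d/2-1}_{2,1})$); this is exactly where the restriction $\alpha<d/2+1/2$ enters. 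The nonlinear remainder is again $\lesssim D(t)^2$ by product laws. Combining the two steps yields $D(t)\le C(D_0+\|(\Lambda a_0,m_0)\|^h_{\dot B^{d/2-1}_{2,1}})+CD(t)^2$, and a standard continuity argument closes the estimate, provided the data are small enough. The main obstacle will be the nonlinear analysis of Step 1: the product laws in Besov spaces become delicate near the endpoint $s=-d/2$, forcing a careful exploitation of the divergence form of $\mathcal{N}(U)$ together with a fine interpolation between the decay provided by $D(t)$ at low frequencies and the uniform estimates of Theorem \ref{t:L2.WP} at high frequencies.
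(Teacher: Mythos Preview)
Your overall architecture (Duhamel at low frequencies, a time-weighted maximal-regularity estimate at high frequencies, then a bootstrap $D(t)\lesssim D_0+X_0+D(t)^2$) coincides with the paper's. But Step~2 as written does not close.

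You place the nonlinear forcing in $L^1_t(\dot B^{d/2+1}_{2,1})^h$. This is two derivatives too many: $\mathcal N(U)$ contains the second-order term $\mathcal L(Q(a)m)$, and bounding it in $\dot B^{d/2+1}_{2,1}$ would require $Q(a)m\in\dot B^{d/2+3}_{2,1}$, which is well beyond the solution regularity supplied by Theorem~\ref{t:L2.WP}. The paper instead applies Lemma~\ref{l:lFH} to the system for $(t^\alpha a,t^\alpha m)$ with $q=r=\infty$ (not $r=1$), obtaining
\[
\|\tau^\alpha((\gamma+\Lambda)a,m)\|^h_{\widetilde{L^\infty_t}(\dot B^{\frac d2+1}_{2,1})}
\lesssim
\bigl\|(\gamma+\Lambda)(\alpha\tau^{\alpha-1}a),\ \alpha\tau^{\alpha-1}m,\ \tau^\alpha N(a,m)\bigr\|^h_{\widetilde{L^\infty_t}(\dot B^{\frac d2-1}_{2,1})},
\]
so that the forcing sits in $\widetilde{L^\infty_t}(\dot B^{d/2-1}_{2,1})$, where all product and composition estimates close directly.

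There is a second discrepancy in how the lower-order remainder $\alpha\tau^{\alpha-1}U$ is handled. You absorb it by interpolation and attribute the constraint $\alpha<\tfrac d2+\tfrac12$ to this step. In the paper this term is absorbed into the left-hand side by choosing the frequency threshold $j_0$ large: for $\tau\ge1$ one has
$\|\tau^{\alpha-1}U\|^h_{\widetilde{L^\infty}(1,t;\dot B^{d/2-1}_{2,1})}\le \alpha\,2^{-2j_0}\|\tau^{\alpha}U\|^h_{\widetilde{L^\infty_t}(\dot B^{d/2+1}_{2,1})}$,
and taking $\alpha 2^{-2j_0}<\tfrac14$ suffices. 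This is exactly why the statement of the theorem says $j_0$ depends on $d,\mu,\lambda,\kappa,\gamma$. The upper bound $\alpha\le\tfrac12(d+1-\varepsilon)$ actually enters elsewhere, namely when bounding the low-frequency part of quantities like $\|\tau^\alpha m\|_{\widetilde{L^\infty_t}(\dot B^{d/2+1}_{2,1})}$ by $D(t)$ during the high-frequency nonlinear estimates.

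A minor remark on Step~1: for $\gamma>0$ the paper does not need (and does not use) the divergence form of $\mathcal N(U)$; it bounds $\|N(a,m)(\tau)\|^l_{\dot B^{-d/2}_{2,\infty}}$ directly via the endpoint product law (Lemma~\ref{l:prd.g.lmt.p2}) and never extracts a derivative from the semigroup. The divergence-form rewriting is reserved for the case $\gamma=0$ (Theorem~\ref{t:CNSK.dcy.s0}).
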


We next state the result for the case $\gamma=0.$ We set 
\begin{equation}\label{d:CNSK.Dt.s0}
\tilde D(t) := \sup_{s\in(-\frac{d}{2},\frac{d}{2}+1]} 
\| \langle\tau\rangle^{\frac{1}{2}(s+\frac{d}{2})}
(\Lambda (\rho-\rho_*), m)\|_{L^\infty_t(\dot B^{s}_{2,1})}^l 
 + \| \tau^{\alpha}
\Lambda^2 (\Lambda (\rho-\rho_*), m)\|_{\widetilde{L^\infty_t}( \dot B^{\frac{d}{2}-1}_{2,1})}^h. 
\end{equation}
For the case $\gamma=0,$ we have the following. 
\begin{thm}\label{t:CNSK.dcy.s0}
Let $\gamma=0.$ Let the data $(\rho_0, m_0)$ 
satisfy 
\begin{equation}\label{t:CNSK.dcy.s0.asmp.id}
(\Lambda (\rho_0-\rho_*), m_0) \in \dot{B}^{-\frac{d}{2}}_{2,\infty}(\R^d) 
\cap \dot{B}^{\frac{d}{2}-1}_{2,1}(\R^d).
\end{equation}
There exist a frequency-threshold $j_0$ only depending on $d,$ 
$\mu,$ $\lambda$ and $\kappa$ and 
a positive constant $c$ so that if in addition 
\begin{equation}\nonumber
\tilde D_0:=   \|(\rho_0-\rho_*)\|_{\dot B^{1-\frac{d}{2}}_{2,\infty}}^l
	+ \|m_0\|_{\dot B^{-\frac{d}{2}}_{2,\infty}}^l \le c
\end{equation}
then the global solution $((\rho-\rho_*),m)$ given by Theorem \ref{t:L2.WP.szero} 
satisfies for all $t\ge0,$
\begin{equation}\label{t:CNSK.dcy.s0:smllsol}
\tilde D(t) \le 
C\bigl( \tilde D_0+ \|(\Lambda (\rho_0-\rho_*),m_0)\|_{\dot B^{\frac{d}{2}-1}_{2,1}}^h \bigr)
\end{equation}
where $\tilde D(t)$ is defined by \eqref{d:CNSK.Dt.s0} and 
$\alpha := \frac{d}{2}+\frac{1}{2}-\varepsilon$ for sufficiently small $\ep>0.$ 
\end{thm}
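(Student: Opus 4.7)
The plan is to follow the bootstrap architecture of Theorem \ref{t:CNSK.dcy}, adapting its low-frequency linear analysis to the degenerate case $\gamma=0$. Writing $a:=\rho-\rho_*$, I bound each of the two constituents of $\tilde D(t)$ in turn: the low-frequency supremum and the high-frequency time-weighted piece. In both cases I express $(\Lambda a,m)$ via Duhamel's formula for the linearized Korteweg system, estimate the semigroup contribution together with the nonlinear source terms, and close the quadratic inequality
$$
\tilde D(t)\le C\bigl(\tilde D_0+\|(\Lambda a_0,m_0)\|^h_{\dot B^{d/2-1}_{2,1}}\bigr)+C\tilde D(t)^2
$$
using the smallness assumptions. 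All uniform-in-time norms of $(a,m)$ needed during the nonlinear estimates are furnished by Theorem \ref{t:L2.WP.szero}.

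For the low-frequency semigroup, the linearization around $(\rho_*,0)$ for $\gamma=0$ reads $\partial_t a+\div m=0$ and $\partial_t m-\mathcal{L}m-\kappa\rho_*\nabla\Delta a=0$. Splitting $m$ into its compressible and incompressible components and passing to Fourier variables, the compressible block has characteristic polynomial $\lambda^2+(2\mu+\lambda)|\xi|^2\lambda+\kappa\rho_*|\xi|^4=0$ with both roots of real part bounded by $-c|\xi|^2$, while the incompressible block follows a pure heat flow. It follows that the semigroup $e^{t\mathcal{A}}$ acting on $(\Lambda a, m)$ satisfies, on frequencies below $2^{j_0}$,
$$
\|e^{t\mathcal{A}}(\Lambda a_0,m_0)\|_{\dot B^s_{2,1}}^l\lesssim \langle t\rangle^{-\frac12(s+\frac{d}{2})}\|(\Lambda a_0,m_0)\|_{\dot B^{-d/2}_{2,\infty}}^l \quad\text{for every } s\in(-\tfrac{d}{2},\tfrac{d}{2}+1],
$$
by the standard heat-kernel interpolation between $\dot B^{-d/2}_{2,\infty}$ and any Besov norm with $s>-d/2$. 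The low-frequency pieces of the nonlinear sources are then estimated via Besov product laws, making essential use of the momentum formulation which places every nonlinearity in divergence form and thus spares one derivative—a saving indispensable in the critical dimension $d=3$.

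For the high frequencies, Lemma \ref{lCNSKp} provides parabolic smoothing in both $\Lambda a$ and $m$, hence exponential decay on each dyadic block $j\ge j_0-1$. Multiplying the linearized equations by $\tau^{\alpha}$ and integrating by parts in time shifts a factor $\alpha\tau^{\alpha-1}$ onto the right-hand side; the choice $\alpha=\tfrac{d}{2}+\tfrac{1}{2}-\varepsilon$ is forced by demanding that this recycled weight be dominated by the low-frequency decay rate already obtained. The induced nonlinear terms, notably $\div(\rho^{-1}m\otimes m)$ and $\div K(\rho)$, are controlled by combining product laws in $\dot B^{d/2\pm 1}_{2,1}$ with the composition estimate for $\rho\mapsto\rho^{-1}$; the latter is legitimate because Theorem \ref{t:L2.WP.szero} supplies $\Lambda a\in\dot B^{d/2-1}_{2,1}\cap\dot B^{d/2+1}_{2,1}$ uniformly in time, which under the smallness of the initial data keeps $\rho$ bounded away from zero.

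The main technical obstacle is the careful book-keeping of the weighted convolutions $\int_0^t\langle t-\tau\rangle^{-\beta}\tau^{-\gamma}\,d\tau$ arising from Duhamel's formula: the exponents $\beta,\gamma$ must be compatible across the whole range $s\in(-d/2,d/2+1]$, and at the endpoint $s=d/2+1$ in the critical dimension $d=3$ this compatibility is tight. Choosing $\varepsilon>0$ strictly positive but small prevents any logarithmic loss at the spectral interface $|\xi|\sim 2^{j_0}$ and guarantees that the high-frequency weight $\tau^{\alpha}$ interpolates losslessly with the low-frequency decay $\langle\tau\rangle^{-(s+d/2)/2}$. Once this matching is carried out, smallness of $\tilde D_0$ closes the bootstrap and yields the claimed estimate \eqref{t:CNSK.dcy.s0:smllsol}.
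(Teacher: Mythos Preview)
Your proposal is correct and follows the same route as the paper: reduction to the proof of Theorem \ref{t:CNSK.dcy} with $\gamma=0$, treating low frequencies via Duhamel and the semigroup decay of Lemma \ref{l:lFH.decay}, and high frequencies by applying Lemma \ref{l:lFH} to the time-weighted unknowns $(t^\alpha a,t^\alpha m)$, then closing a quadratic inequality in $\tilde D(t)$ with the global bound $\tilde X(t)\lesssim \tilde X_0$ from Theorem \ref{t:L2.WP.szero}. One small correction worth noting: in the paper the threshold $j_0$ is chosen precisely so that the commutator term $\alpha\tau^{\alpha-1}(\Lambda a,m)$ can be absorbed into the left-hand side via the high-frequency gain $2^{-2j_0}$ (not to avoid a logarithmic loss at the spectral interface), while the upper bound $\alpha\le\tfrac12(d+1-\varepsilon)$ is dictated by the high-frequency nonlinear product estimates rather than by matching the low-frequency decay rate.
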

\begin{rem}
Note that since $d\ge3,$ we have $- \frac{d}{2} \le \frac{d}{2}-3,$ 
which implies $\|u\|_{\dot{B}^{\frac{d}{2}-3}_{2,1}}^l
\lesssim\|u\|_{\dot{B}^{-\frac{d}{2}}_{2,\infty}}^l$ for $d\ge3.$
Thus, the regularity assumption \eqref{t:CNSK.dcy.s0.asmp.id} 
in Theorem \ref{t:CNSK.dcy.s0} is stronger than 
that of Theorem \ref{t:L2.WP.szero}, 
and the corresponding small global solution is bound to naturally arise. 
\end{rem}

The rest of the paper is organized as follows: in the second section, we 
study the linearized problem of \eqref{CNSK}. 
The third section is devoted to the proof of Theorem \ref{t:L2.WP}
and Theorem \ref{t:L2.WP.szero}. 
In the fourth section, we prove Theorem \ref{t:CNSK.dcy} and 
Theorem \ref{t:CNSK.dcy.s0}. 
In the Appendix, we collect some properties of Besov spaces.

\section{Linearized problem}
Linearization around a constant state $(\rho_*,0)$ gives 
\begin{equation}\label{CNSKp:rho.m}
\left\{\begin{aligned}
&\partial_t a + \div m = 0 ,\\
&\partial_t m - \frac{1}{\rho_*} \mathcal{L} m  
    -\kappa \rho_* \nabla\Delta a
   + \gamma \nabla a
    =  \mathcal{L}\left( Q(a) m\right) 
    + \div(K(a)). \\
&\qquad
     - \frac{1}{\rho_*}  \div(m\otimes m) 
    -\div\left( Q(a)m\otimes m \right) 
    +\left( P'(a+\rho_*)- \gamma \right) \nabla a,
\end{aligned}\right.
\end{equation}
where we put $\gamma := P'(\rho_*),$ 
$a := \rho-\rho_*$ and $Q(a) :=\frac{a}{\rho_*(a+\rho_*)}$.
We split $m$ into compressible and incompressible components, 
i.e., $m=w+\mathcal{P}^{\bot}u,$ with $w:=\mathcal{P}m$
where $\mathcal{P}$ and $\mathcal{P}^{\bot}$ 
are the projectors onto divergence-free and 
potential vector-fields, respectively ($w:=({\rm Id}+\nabla\div(-\Delta)^{-1}) m$). 
Setting $v:=\Lambda^{-1} \div m=\Lambda^{-1} \div\mathcal{P}^{\bot}m,$ 
we have 
\begin{equation}\label{lCNSK.qvw}
\left\{\begin{aligned}
&\partial_t a + \Lambda v = 0 , \\
&\partial_t v - \underline\nu \Delta v - \gamma \Lambda a - \kappa\rho_* \Lambda^3 a = 0, \\
&\partial_t w - \underline\mu \Delta w = 0, \\
\end{aligned}\right.
\end{equation}
where 
\begin{equation}\label{d:CNSKnubar}
\underline\nu := \frac{\nu}{\rho_*},\quad 
\nu := 2\mu+\lambda,\quad 
\underline\mu := \frac{\mu}{\rho_*}.
\end{equation}
We investigate the properties of the above linearized system. 

\subsection{Maximal regularity estimates in Besov spaces}
Let us consider the following standard heat equation 
with a constant diffusive coefficient $\nu>0$:
\begin{equation}\label{nuH}
\left\{\begin{array}{cl}
\partial_t u -\nu\Delta u = f,\quad  &t>0, \quad x\in\R^d, \\[1ex]
u|_{t=0} = u_0, &x\in\R^d,
\end{array}\right.
\end{equation}
where the outer force $f :\R_+\times\R^d \to \R$ and the initial data
$u_0 : \R^d \to \R$ are given functions or tempered distributions. 
Recall the following estimate for a linear heat equation, 
which can be found in literatures such as 
\cite{BCD2011} P157 and \cite{Da2015} P7. 
\begin{lem}\label{l:heatbesov}
Let $T>0$, $s\in\R$ and $1\le p, r, \sigma \le\infty.$
Assume that $u_0 \in \dot B^s_{p,\sigma}(\R^d)$ and 
$f\in \widetilde{L^r_T}(\dot B^{s-2+\frac{2}{r}}_{p,\sigma}(\R^d))$. 
Then \eqref{nuH} has a unique solution $u$ in 
$\widetilde{L^\infty_T}(\dot B^{s}_{p,\sigma}(\R^d))
\cap \widetilde{L^r_T}(\dot B^{s+\frac{2}{r}}_{p,\sigma}(\R^d))$
and there exists a constant $C$ depending only on $q$ and $r$ such that for all $q \in [r,\infty],$ we have  
\begin{equation}
\nonumber
\nu^{\frac{1}{q}} 
\|u \|_{\widetilde{L^q_T}(\dot B^{s+\frac{2}{q}}_{p,\sigma})} 
\le C\Big( \|u_0\|_{\dot B^{s}_{p,\sigma}} 
+ \nu^{\frac{1}{r}-1} \|f\|_{\widetilde{L^r_T}(\dot B^{s-2+\frac{2}{r}}_{p,\sigma})} \Big).
\end{equation}
If in addition $\sigma$ is finite, then $u$ belongs to $C([0,T);\dot B^{s}_{p,\sigma}(\R^d))$. 
\end{lem}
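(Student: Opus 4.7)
The plan is to reduce \eqref{nuH} block-by-block to a scalar convolution estimate via Duhamel's formula. First I would apply the Littlewood--Paley projector $\dot\Delta_j$ to both sides; since $\dot\Delta_j$ commutes with $\partial_t$ and $\Delta$, the block $u_j:=\dot\Delta_j u$ solves the same heat equation with data $\dot\Delta_j u_0$ and source $\dot\Delta_j f$, and Duhamel gives
\[
u_j(t) = e^{\nu t\Delta}\dot\Delta_j u_0 + \int_0^t e^{\nu(t-\tau)\Delta}\dot\Delta_j f(\tau)\,d\tau.
\]
The key ingredient is the Bernstein-type decay estimate
\[
\|e^{\nu t\Delta}\dot\Delta_j g\|_{L^p} \le C\, e^{-c\nu 2^{2j} t}\,\|\dot\Delta_j g\|_{L^p},
\]
valid for all $t\ge0$, $\nu>0$, $1\le p\le\infty$ and $j\in\Z$, with $c>0$ depending only on the annulus carrying $\widehat\phi$. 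Choosing $\psi\in\mathcal{S}(\R^d)$ radial with $\psi\equiv 1$ on $\supp\widehat\phi$ and supported away from the origin, one writes $e^{\nu t\Delta}\dot\Delta_j g = \mathcal{F}^{-1}\bigl[\psi(2^{-j}\cdot)\,e^{-\nu t|\cdot|^2}\bigr] * \dot\Delta_j g$, rescales $\xi\mapsto 2^j\xi$, and estimates the resulting kernel in $L^1$ to invoke Young's inequality in $x$.

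Next I would take $\|\cdot\|_{L^p}$ and then $\|\cdot\|_{L^q(0,T)}$, obtaining
\[
\|u_j\|_{L^q_T L^p} \le C\|e^{-c\nu 2^{2j}t}\|_{L^q(0,T)}\|\dot\Delta_j u_0\|_{L^p} + C\bigl\|e^{-c\nu 2^{2j}\cdot} * \|\dot\Delta_j f\|_{L^p}\bigr\|_{L^q(0,T)}.
\]
The linear term is bounded by $(c\nu 2^{2j} q)^{-1/q}\|\dot\Delta_j u_0\|_{L^p}$; to the Duhamel convolution I would apply Young's inequality in time with exponents $1+\tfrac{1}{q} = \tfrac{1}{\rho}+\tfrac{1}{r}$ (which requires exactly $q\ge r$), producing the extra factor $(c\nu 2^{2j}\rho)^{-1/\rho}$. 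Using the identities $s+\tfrac{2}{q}-\tfrac{2}{\rho} = s-2+\tfrac{2}{r}$ and $\tfrac{1}{q}-\tfrac{1}{\rho} = \tfrac{1}{r}-1$, this rearranges to
\[
\nu^{1/q}\,2^{j(s+2/q)}\|u_j\|_{L^q_T L^p} \le C\,2^{js}\|\dot\Delta_j u_0\|_{L^p} + C\,\nu^{1/r-1}\,2^{j(s-2+2/r)}\|\dot\Delta_j f\|_{L^r_T L^p}.
\]
Taking $\ell^\sigma(\Z)$ in $j$, which is precisely the order of summation defining the Chemin--Lerner norm, yields the advertised inequality with a constant that depends only on $q$ and $r$ through the scaling factors in Young's inequality.

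Uniqueness is immediate by applying the estimate to the difference of two solutions with vanishing data. Existence follows by constructing the solution band-by-band from the explicit Duhamel formula and verifying via the summed bound that it belongs to the claimed space. For the continuity statement when $\sigma<\infty$, the truncated sums $\dot S_N u$ lie in $\cC([0,T);\dot B^s_{p,\sigma})$ trivially (finite band), and the estimate applied to $u-\dot S_N u$ combined with the dominated convergence on $\ell^\sigma$ shows that the high-frequency tails vanish in $\widetilde{L^\infty_T}(\dot B^s_{p,\sigma})$ as $N\to\infty$, upgrading to continuity in the full Besov norm by completeness. The only genuinely technical point is the Bernstein-type decay of the heat kernel on dyadic blocks; once it is available, the remainder of the argument is bookkeeping for Young's inequality while tracking the scalings in $\nu$ and $2^j$.
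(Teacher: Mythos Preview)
Your proposal is correct and follows the standard argument; note, however, that the paper does not give its own proof of this lemma but simply recalls it from the literature (\cite{BCD2011}, p.~157 and \cite{Da2015}, p.~7), so there is no in-paper proof to compare against. The route you outline---localized heat-kernel decay on dyadic blocks, Duhamel, Young's inequality in time with $1+\tfrac1q=\tfrac1\rho+\tfrac1r$, then $\ell^\sigma$-summation---is exactly the proof one finds in those references, and your tracking of the $\nu$- and $2^j$-powers is accurate.
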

\noindent Our aim here is to prove a similar estimate for the linearized system of \eqref{CNSK}. 

\smallbreak

Two difficulties usually arise in the analysis of 
the compressible viscous fluids. One is the loss of derivative 
induced by the pressure term and the other is the appearance of the 
second order nonlinear term. Both of them preclude the application 
of standard semi-group theory and necessitate laborious energy estimates. 
In contrast to the compressible viscous fluids without the capillary term, 
System \eqref{CNSK} is known to have a pleasant mathematical upside, 
which is that there is no longer a loss of derivative due to the parabolic smoothing effect for the density. 

From hereon, we consider a linear hyperbolic-parabolic equation 
\begin{equation}\label{lCNSKp}
\left\{\begin{aligned}
&\partial_t a + \div m = f, \\
&\partial_t m - \frac{1}{\rho_*} \mathcal{L}m  +  \gamma\nabla a 
	- \kappa \rho_* \nabla\Delta a =  g \\
\end{aligned}\right.
\end{equation}
with initial data $(a_0,m_0)$ 
and derive an a priori estimate in Besov spaces 
using the method of Lyapunov in the frequency space. 
\begin{lem}\label{l:lFH}
Let $T>0$, $s\in\R$, $1\le r, \sigma\le \infty$, $\gamma\ge0$,
$$
((\gamma+\Lambda) a_0, m_0) \in \dot{B}^s_{2,\sigma}(\R^d)
\quad\text{and}\quad
((\gamma+\Lambda) f, g) 
	\in \widetilde{L^r}(0,T;\dot{B}^{s-2+\frac{2}{r}}_{2,\sigma}(\R^d)).
$$
Suppose that $(a,m)$ is a solution for \eqref{lCNSKp} such that 
\begin{equation}\nonumber
((\gamma+\Lambda) a, m) 
\in \widetilde{L^q}(0,T; \dot{B}^{s+\frac{2}{q}}_{2,\sigma}(\R^d)) 
\quad\text{for}\quad q \in [r,\infty]
\end{equation} 
with initial data $(a_0,m_0).$
Then there exists some positive constant $C$ depending only on $\rho_*,$ 
$\lambda$, $\mu$, $\kappa$, $d$, $q$, $r$ and $\sigma$ 
such that for all $q \in [r,\infty],$ 
the following estimate hold: 
\begin{equation}\nonumber
\|((\gamma+\Lambda) a, m) \|_{\widetilde{L^q_T}(\dot{B}^{s+\frac{2}{q}}_{2,\sigma})} 
\le C\left( \|((\gamma+\Lambda) a_0, m_0) \|_{\dot{B}^{s}_{2,\sigma}} 
+\|((\gamma+\Lambda) f, g) \|_{\widetilde{L^r_T}(\dot{B}^{s-2+\frac{2}{r}}_{2,\sigma})} \right).
\end{equation}
for all $0 <t \le T.$
\end{lem}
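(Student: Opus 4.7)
The plan is to perform dyadic frequency localization, split the momentum $m$ into its solenoidal and compressible components via the Leray projector, and then handle the remaining coupled subsystem for the density and the compressible velocity by a Lyapunov-functional argument in Fourier space. This follows Danchin's strategy for barotropic compressible flows; the novelty is in tuning the Lyapunov weights to render the estimate uniform in the pressure parameter $\gamma\ge 0$.

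First, apply $\dot\Delta_j$ to \eqref{lCNSKp} and set $(a_j,m_j,f_j,g_j):=\dot\Delta_j(a,m,f,g)$. Decompose $m_j=w_j+\nabla\Lambda^{-1}v_j$ with $v_j:=\Lambda^{-1}\div m_j$ and $w_j:=\mathcal{P}m_j$. Since the pressure and capillarity contributions are gradients, $w_j$ decouples and solves the heat equation $\partial_t w_j-\underline\mu\Delta w_j=\mathcal{P}g_j$, to which Lemma~\ref{l:heatbesov} applies directly. The remaining coupled subsystem is
\begin{equation*}
\left\{\begin{aligned}
&\partial_t a_j+\Lambda v_j=f_j,\\
&\partial_t v_j-\underline\nu\Delta v_j-(\gamma\Lambda+\kappa\rho_*\Lambda^3)a_j=\Lambda^{-1}\div g_j.
\end{aligned}\right.
\end{equation*}
On the Fourier side at $|\xi|\in[2^{j-1},2^{j+1}]$, I introduce
\begin{equation*}
\Psi_j^2(\xi):=(\gamma+\kappa\rho_*|\xi|^2)|\hat a_j|^2+|\hat v_j|^2-2\eta_j(\xi)\,\mathrm{Re}(\hat a_j\overline{\hat v_j}),
\end{equation*}
with the cross-term weight $\eta_j(\xi)$ chosen so that (i) $\Psi_j^2$ is equivalent to its symmetric part uniformly in $\gamma\ge 0$ (Young's inequality) and (ii) a direct differentiation along the system yields the pointwise differential inequality
\begin{equation*}
\frac{d}{dt}\Psi_j+c\,|\xi|^2\Psi_j\lesssim(\gamma+\kappa\rho_*|\xi|^2)^{1/2}|\hat f_j|+|\widehat{\Lambda^{-1}\div g_j}|,
\end{equation*}
with $c>0$ depending only on $\rho_*,\lambda,\mu,\kappa$.

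The symmetric part of $\Psi_j^2$ alone provides only $|\hat v_j|$-dissipation of strength $\underline\nu|\xi|^2$ (the pressure and capillarity cross-terms cancel exactly by virtue of the weight $(\gamma+\kappa\rho_*|\xi|^2)$), while the cross term supplies the missing $|\hat a_j|$-dissipation through the coercive factor $\gamma|\xi|+\kappa\rho_*|\xi|^3$; its negative feedback on $|\hat v_j|^2$ is absorbed by taking $\eta_j(\xi)$ sufficiently small. To convert the resulting $(\sqrt\gamma+|\xi|)$-weighted bound on $\hat a_j$ into the desired $(\gamma+|\xi|)$-weighted one in the low-frequency regime $|\xi|\lesssim\sqrt\gamma$, I use the first equation integrated in time: $\hat a_j(t)-\hat a_j(0)=-\int_0^t|\xi|\hat v_j+\int_0^t\hat f_j$, which (once the Lyapunov bound has controlled $\|\Lambda v_j\|_{\widetilde L^r_T\dot B^{s-2+2/r}_{2,\sigma}}$) gives the additional $\gamma$-weighted $L^\infty_t$-bound on $a_j$ itself.

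Integrating $\Psi_j$ in $\xi$ over the dyadic shell, applying Duhamel together with the standard $L^q_t$-time-convolution argument used in the proof of Lemma~\ref{l:heatbesov}, then multiplying by $2^{j(s+2/q)}$ and summing in $\ell^\sigma(\Z)$, yields the claimed Chemin–Lerner bound. The principal obstacle is the uniformity in $\gamma\ge 0$: the coefficient $\eta_j(\xi)$ must be chosen differently in the pressure-dominated regime $|\xi|\lesssim\sqrt\gamma$ and the capillarity-dominated regime $|\xi|\gtrsim\sqrt\gamma$ so that both the norm equivalence and the parabolic decay rate $c|\xi|^2$ hold with constants independent of $\gamma$, and the auxiliary low-frequency estimate on $a_j$ must be compatible with the $\widetilde L^q_T$-functional setting. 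This uniform treatment is precisely what forces the weighted variable $(\gamma+\Lambda)a$ in the statement, and what allows the same estimate to cover both $\gamma>0$ and the degenerate case $\gamma=0$.
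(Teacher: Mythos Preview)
Your overall strategy---Helmholtz decomposition, heat estimate for the solenoidal part, and a Fourier-side Lyapunov functional for the coupled $(a,v)$ system---is exactly what the paper does. The difference lies in the cross-term weight. You anticipate needing a frequency-dependent $\eta_j(\xi)$, chosen separately in a pressure-dominated regime $|\xi|\lesssim\sqrt\gamma$ and a capillarity-dominated regime $|\xi|\gtrsim\sqrt\gamma$, together with an auxiliary time-integration of the mass equation to upgrade the $\sqrt\gamma$-weight to a $\gamma$-weight. The paper avoids both complications: it takes the cross term to be $-2\eta\,|\xi|\,\mathrm{Re}(\widehat a\,\overline{\widehat v})$ with a \emph{single constant} $\eta$ depending only on $\nu,\kappa,\rho_*$. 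The point is that the capillarity contribution $\kappa\rho_*|\xi|^2|\widehat a|^2$ in the Lyapunov functional is already enough to absorb the cross term via Young's inequality at \emph{all} frequencies and for \emph{all} $\gamma\ge0$; the $\gamma|\widehat a|^2$ piece plays no role in the coercivity and simply rides along. A short computation (choose $\epsilon=\kappa\rho_*/(2\underline\nu)$ in the Young step and then $\eta$ small) shows both the equivalence $\mathcal L^2\cong(\gamma+|\xi|^2)|\widehat a|^2+|\widehat v|^2$ and the dissipation inequality $\tfrac{d}{dt}\mathcal L^2+c|\xi|^2\mathcal L^2\le0$ hold with constants independent of $\gamma$, so no regime splitting is needed.

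Your proposed integration-in-time step (recovering $a_j$ from $\int\Lambda v_j$) is a legitimate device, familiar from Danchin's treatment of the barotropic system without capillarity; it would work here too, but it is not needed once the single-constant Lyapunov functional is in place. In short, your plan is correct but more laborious than necessary: the capillarity term is precisely what makes the uniform-in-$\gamma$ argument go through with one fixed weight.
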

\begin{rem}
Note that the above constant does not depend the parameter $\gamma$ 
and holds for both cases $\gamma>0$ and $\gamma=0.$ 
It is clear from the above estimates that one needs to impose a 
stronger control on the low frequencies at a linear level 
when $\gamma>0$, which is in accordance with the known result for the 
isentropic compressible fluid. 
\end{rem}
\begin{proof}[Proof of Lemma \ref{l:lFH}]
The argument below is inspired by \cite{Da2015}. 
However, the resulting smoothing effects, as expected, are strikingly 
better than those of the compressible Navier-Stokes system. 
After projecting $m$ onto the compressible part by $v = \Lambda^{-1} \div m$ 
as before, 
we first treat the homogeneous system of the first two equations of System \eqref{lCNSK.qvw}~:
\begin{equation}\label{lCNSK.qv}
\left\{
\begin{aligned}
&\partial_t a +  \Lambda v = 0 , \\
&\partial_t v - \underline\nu \Delta v - 
	\gamma \Lambda a - \kappa\rho_* \Lambda^3 a = 0 \\
\end{aligned}
\right.
\end{equation}
where $\gamma \ge0.$ 
Taking the Fourier transform, System \eqref{lCNSK.qv} is transformed into 
\begin{equation}\nonumber	\label{lCNSK.Fqv.mtrx}
\begin{aligned}
&\frac{d}{dt}
  \left[ \begin{array}{c} 
          \widehat a \\
          \widehat v \\
  \end{array} \right] 
= \widehat A(\xi)  \left[ \begin{array}{c} 
          \widehat a \\
          \widehat v \\
  \end{array} \right] 
\quad\text{with}\quad 
\widehat A(\xi):=\left[
\begin{array}{cc}
0 & -|\xi| \\
(\gamma  + \kappa\rho_* |\xi|^2)|\xi| & - \underline\nu |\xi|^2 \\
\end{array}
\right].
\end{aligned}
\end{equation}
The characteristic equation for the matrix $A(\xi)$ is 
$$
X^2+ \underline\nu |\xi|^2X+(\gamma  + \kappa\rho_* |\xi|^2)|\xi|^2= 0,
$$ 
and $\widehat A(\xi)$ has two distinct eigenvalues given by: 
$$\begin{aligned}
\lambda_{\pm}
&=\lambda_{\pm}(\xi) 
:= -\frac{\underline\nu}{2}|\xi|^2
	\biggl(1\pm \sqrt{1-\frac{4(\gamma|\xi| + \kappa\rho_* |\xi|^3)|\xi|}{\underline\nu^2|\xi|^4}}\biggr) \\
&= -\frac{\underline\nu}{2}|\xi|^2
	\biggl(1\pm \sqrt{1-\frac{4\kappa\rho_*}{\underline\nu^2}
	         -\frac{4\gamma}{\underline\nu^2|\xi|^2}}\biggr)\cdotp
\end{aligned}$$ 
Recall that $\gamma\ge0.$ 
From the above formula, we readily gather the following: 
\begin{enumerate}[$(1)$]
\item If 
$\nu^2<4\kappa\rho_*^3,$ then both $\lambda_{\pm}(\xi)$
 are two complex conjugated eigenvalues
 $$
\lambda_{\pm}(\xi) = -\frac{\underline\nu}{2}|\xi|^2\biggl(1\pm i S(|\xi|)\biggr) 
\quad\text{with}\quad \tilde S(|\xi|) = \sqrt{\frac{4\kappa\rho_*}{\underline\nu^2}-1
	         +\frac{4\gamma}{\underline\nu^2|\xi|^2}},
$$
regardless of the choice of $\gamma\ge0,$ for all frequencies. 
If $\gamma>0,$ we have 
$$
\lambda_{\pm}(\xi) 
= -\frac{\underline\nu}{2}|\xi|^2\biggl(1\pm i S(|\xi|)\biggr) 
= -\frac{\underline\nu}{2}|\xi|^2 \mp i \sqrt{\gamma} |\xi| + i O(|\xi|^3)
$$
as $|\xi| \to 0$ and 
$$
\lambda_{\pm}(\xi) 
= -\frac{\underline\nu}{2}|\xi|^2\biggl(1\pm i S(|\xi|)\biggr) 
= -\frac{\underline\nu}{2}|\xi|^2 \mp i \frac{\underline\nu}{2} 
 \left(\frac{4\kappa\rho_*}{\underline\nu^2} -1 \right)^{\frac{1}{2}} |\xi|^2 + i O(1)
$$
as $|\xi| \to \infty.$
If $\gamma=0,$ we have 
$$
\lambda_{\pm}(\xi) = -\frac{\underline\nu}{2}
\biggl(1\pm i\sqrt{\frac{4\kappa\rho_*}{\underline\nu^2}-1} \biggr) |\xi|^2. 
$$
Thus, we may expect parabolic smoothing for both components throughout 
all frequencies. 
\item If $\nu^2=4\kappa\rho_*^3$ and $\gamma>0$, then 
both $\lambda_{\pm}(\xi)$ are two complex conjugated eigenvalues
 $$
\lambda_{\pm}(\xi) = -\frac{\underline\nu}{2}|\xi|^2\biggl(1\pm i \sqrt{\frac{4\gamma}{\underline\nu^2|\xi|^2}}\biggr) 
= -\frac{\underline\nu}{2}|\xi|^2\biggl(1\pm i\frac{2\sqrt{\gamma}}
{\underline\nu|\xi|}\biggr) 
= -\frac{\underline\nu}{2}|\xi|^2 \pm i\sqrt{\gamma} |\xi| 
$$
for all frequencies. 
If $\nu^2=4\kappa\rho_*^3$ and $\gamma=0$, then 
we have a (real) double root 
 $$
\lambda_{\pm}(\xi) = -\frac{\underline\nu}{2}|\xi|^2.
$$
\item 
If $\nu^2>4\kappa\rho_*^3$ and $\gamma>0$ 
then we have two regions of distinct behaviors.
\begin{enumerate}[$(i)$]
\item 
In the low frequency ($|\xi|<\frac{4\gamma\rho_*^2}{\nu^2-4\kappa\rho_*^3}$), 
$\lambda_{\pm}(\xi)$ are two complex conjugated eigenvalues 
$$
\lambda_{\pm}(\xi) = -\frac{\underline\nu}{2}|\xi|^2\biggl(1\pm iS(|\xi|)\biggr) 
\quad\text{with}\quad S(|\xi|) = \sqrt{\frac{4\gamma}{\underline\nu^2|\xi|^2}
			-\left(1-\frac{4\kappa\rho_*}{\underline\nu^2}\right)}
$$
whose real parts are negative and 
\begin{equation}\nonumber\begin{aligned}
\lambda_{\pm}(\xi)
&= -\frac{\underline\nu}{2}|\xi|^2
\biggl(1\pm i
\left( \frac{2\rho_*\sqrt{\gamma}}{\nu |\xi|} - 
	 \frac {\nu^2-4\kappa\rho_*^3} {4 \rho_* \nu\sqrt{\gamma}} |\xi|
  + \mathcal{O} (|\xi|^{3}) \right)
\biggr) \\
&= -\frac{\underline\nu}{2}|\xi|^2 \mp i \sqrt{\gamma} |\xi| 
  + i \mathcal{O} (|\xi|^{3}) 
\quad\text{as}\quad |\xi|\to0.
\end{aligned}\end{equation}
\item On the other hand, in the high frequency ($|\xi|>\frac{4\gamma\rho_*^2}{\nu^2-4\kappa\rho_*^3}$), 
the two real eigenvalues can be expressed as 
$$
\lambda_{\pm}(\xi) = -\frac{\underline\nu}{2}|\xi|^2\biggl(1\pm R(|\xi|)\biggr) 
\quad\text{with}\quad R(|\xi|) = \sqrt{1-\frac{4\kappa\rho_*}{\underline\nu^2}
	         -\frac{4\gamma}{\underline\nu^2|\xi|^2}}
$$
whose real parts are negative and 
\begin{equation}\nonumber
\lambda_{\pm}(\xi) 
= -\frac{\underline\nu}{2}|\xi|^2
\left( 1\pm \left(1-\frac{4\kappa\rho_*^3}{\nu^2} \right)^{\frac{1}{2}} \right) 
+ \mathcal{O}(1) 
\quad\text{as}\quad |\xi|\to\infty
\end{equation}
\end{enumerate}  
If $\nu^2>4\kappa\rho_*^3$ and $\gamma=0$, 
then both $\lambda_{\pm}(\xi)$ are real and purely parabolic:
$$
\lambda_{\pm}(\xi) = -\frac{\underline\nu}{2}\biggl(1\pm  \sqrt{1-\frac{4\kappa\rho_*}{\underline\nu^2} }\biggr) |\xi|^2
$$
\end{enumerate}
We observe that there are two striking differences 
from the barotropic compressible fluid: One is that the the system 
is linearly stable even when the sound speed is zero, i.e., $\gamma=0$. 
Another is that the solutions in \emph{all} frequency modes likely exhibit 
parabolic smoothing effect. 
These two features directly stem from the presence of the Korteweg tensor. 

According to the above ansatz, we seek to derive the natural regularities for 
$(\widehat a, \widehat v)$ by the construction of the 
Lyapunov function in the Fourier space. 
From \eqref{lCNSK.Fqv.mtrx}, we immediately obtain the following sets of equalities:
\begin{equation}
\label{pr-L^2est-eq:a}
\frac{1}{2} \frac{d}{dt} |\widehat a|^2 + |\xi| \Re( \widehat v | \widehat a) = 0, 
\end{equation}
\begin{equation}
\label{pr-L^2est-eq:v}
\frac{1}{2} \frac{d}{dt} |\widehat v|^2 + \underline\nu |\xi|^2 |\widehat v|^2 
- (\gamma +\kappa \rho_* |\xi|^2)|\xi|\Re( \widehat v | \widehat a)= 0,
\end{equation}
and
\begin{equation}
\label{pr-L^2est-eq:av}
\frac{d}{dt} \Re (\widehat a | \widehat v) + |\xi| |\widehat v|^2 - 
 (\gamma +\kappa \rho_* |\xi|^2)|\xi| |\widehat a|^2 +
   \underline\nu |\xi|^2 \Re( \widehat v | \widehat a) = 0,
\end{equation}
where $\Re z$ denotes the real part of a complex number $z$ and $(\cdot |\cdot  )$ is the inner product in ${\mathbb C}$
(that is $(z_1|z_2):=z_1\,\bar z_2$ with $\bar z$ denoting the complex conjugate of $z$). 

\medbreak
The proof below holds for $\gamma\ge0.$
We fix some real parameter $\eta>0.$
By adding $(\gamma + \kappa\rho_*|\xi|^2) \times$\eqref{pr-L^2est-eq:a}, 
\eqref{pr-L^2est-eq:v} and $-\eta |\xi|\times$\eqref{pr-L^2est-eq:av}, 
we discover that 
\begin{equation}\label{pr-L^2est-eq:Lyp}
\begin{aligned}
\frac{1}{2}\frac{d}{dt} &
\left((\gamma+\kappa\rho_*|\xi|^2)|\widehat a|^2 + |\widehat v|^2
	-2\eta |\xi|\Re (\widehat a | \widehat v) \right) \\
&+ (\underline\nu - \eta) |\xi|^2 |\widehat v|^2 + \eta (\gamma+\kappa \rho_* |\xi|^2) |\xi|^2 |\widehat a|^2 - \eta \underline\nu|\xi|^3 \Re (\widehat a | \widehat v)= 0.
\end{aligned}\end{equation}
By Young's inequality, for all $\ep>0$ we obtain the following 
lower bound for the last three terms of the above:
\begin{equation}\nonumber\begin{aligned}
(\underline\nu - \eta) &|\xi|^2 |\widehat v|^2 
	+ \eta (\gamma+\kappa \rho_* |\xi|^2)|\xi|^2 |\widehat a|^2 
	- \eta \underline\nu|\xi|^3 \Re (\widehat a | \widehat v) \\
&\ge \left(\underline\nu - \eta \left(1+\frac{\underline\nu}{\ep} \right)\right) |\xi|^2 |\widehat v|^2 
	+ \eta (\gamma+(\kappa \rho_*-\ep\underline\nu) |\xi|^2)|\xi|^2 |\widehat a|^2.
\end{aligned}\end{equation}
\emph{for all frequencies}.
Taking $\ep = \frac{\kappa\rho_*^3}{2\nu},$ we have 
$
\kappa \rho_*-\ep\underline\nu = \frac{\kappa \rho_*}{2}.
$
Hence, taking $\eta$ such that 
\begin{equation}\nonumber
\eta < \left(\frac{\rho_*}{\nu} + \frac{2\nu}{\kappa\rho_*^3} \right)^{-1},
\end{equation}
we obtain 
\begin{equation}\nonumber\begin{aligned}
(\underline\nu - \eta) |\xi|^2 |\widehat v|^2 &
	+ \eta (\gamma+\kappa \rho_* |\xi|^2)|\xi|^2 |\widehat a|^2 
	- \eta \underline\nu|\xi|^3 \Re (\widehat a | \widehat v) \\
&\ge C_0(\nu,\kappa,\rho_*) |\xi|^2 \left( (\gamma+ |\xi|^2) |\widehat a|^2+ |\widehat v|^2 \right)
\end{aligned}\end{equation}
for some constant $C_0(\nu,\kappa,\rho_*)$ depending only on $\nu,$ $\kappa$ and $\rho_*.$
In the same way, there exists a positive constant $C_1(\nu,\kappa,\rho_*)$ such that 
\begin{equation}\label{pr-L^2est-eq:Lyp.lbg}\begin{aligned}
 C_1(\nu,\kappa,\rho_*)^{-1}\left( (\gamma+ |\xi|^2) |\widehat a|^2+ |\widehat v|^2 \right) 
&\le(\gamma+\kappa\rho_*|\xi|^2)|\widehat a|^2 + |\widehat v|^2
	-2\eta |\xi|\Re (\widehat a | \widehat v) \\
&\le C_1(\nu,\kappa,\rho_*) \left( (\gamma+ |\xi|^2) |\widehat a|^2+ |\widehat v|^2 \right).
\end{aligned}\end{equation}
Indeed, the upper bound is trivial;
In order to prove the lower bound, we use Young's inequality to show
\begin{equation}\nonumber
(\gamma+\kappa\rho_*|\xi|^2)|\widehat a|^2 + |\widehat v|^2
	-2\eta |\xi|\Re (\widehat a | \widehat v) 
\ge \gamma |\widehat a|^2 + (\kappa\rho_* - 2\eta \ep' ) |\xi|^2|\widehat a|^2 
	+ \left(1- \frac{2\eta}{\ep'}\right)|\widehat v|^2
\end{equation}
for all $\ep'>0.$
Therefore, taking $\ep' = \frac{\kappa\rho_*}{4\eta}$ and $\eta \le \frac{2\sqrt{2}}{\kappa\rho_*}$
for example, we obtain the lower bound, and thus, \eqref{pr-L^2est-eq:Lyp.lbg}. 
The above computations combined with \eqref{pr-L^2est-eq:Lyp} imply
$$
\frac{1}{2}\frac{d}{dt} \mathcal{L}^2 + |\xi|^2 |(\gamma\widehat a, |\xi|\widehat a, \widehat v)|^2 \le 0 
\quad\text{with}\quad 
\mathcal{L}^2 := (\gamma+\kappa\rho_*|\xi|^2)|\widehat a|^2 + |\widehat v|^2
	-2\eta |\xi|\Re (\widehat a | \widehat v).
$$
By \eqref{pr-L^2est-eq:Lyp.lbg}, there exists a constant $\tilde c>0$ 
independent of $|\xi|$ such that 
$$
\frac{1}{2}\frac{d}{dt} \mathcal{L}^2 + \tilde c |\xi|^2 \mathcal{L}^2 \le 0,
$$
which implies the exponential stability 
\begin{equation}\label{pr-L^2est-eq:exp.stb}
\mathcal{L}^2(t) \le e^{-\tilde c|\xi|^2 t} \mathcal{L}^2(0),
\end{equation}
for all $t\ge0$. This combined with \eqref{pr-L^2est-eq:Lyp.lbg} again gives 
$$
|(\gamma\widehat a, |\xi|\widehat a, \widehat v)|(t) \le e^{-c_0|\xi|^2 t} |(\gamma\widehat a, |\xi|\widehat a, \widehat v)|(0).
$$
\medbreak
Granted with the above estimate in the frequency space, 
it is now straightforward to complete the proof of the lemma~: 
it suffices to multiply the Littlewood-Paley decomposition $\widehat \phi_j$, 
perform exactly the same computations as above on 
$$(\gamma\widehat\phi_j\widehat a, 
	2^{j} \widehat\phi_j \widehat a, \widehat\phi_j \widehat v)$$ 
to obtain a bound similar to \eqref{pr-L^2est-eq:exp.stb}:
\begin{equation}\nonumber
\mathcal{L}^2(t) \le e^{-\tilde c 2^{2j} t} \mathcal{L}^2(0), 
\end{equation}
with 
$$
\mathcal{L}^2 := (\gamma+\kappa\rho_*2^{2j})| \widehat\phi_j\widehat a|^2 
			+ |\widehat\phi_j\widehat v|^2
			-2\eta 2^j \Re (\widehat\phi_j\widehat a | \widehat\phi_j\widehat v).
$$
Similarly to \eqref{pr-L^2est-eq:Lyp.lbg}, we have 
\begin{equation}\nonumber
\mathcal{L}^2 \cong |(\gamma \widehat\phi_j\widehat a, 2^j  \widehat\phi_j \widehat a,  \widehat\phi_j\widehat v)|,
\end{equation}
so we have obtained 
\begin{equation}\nonumber
 |(\gamma \widehat\phi_j\widehat a, 2^j  \widehat\phi_j \widehat a,  \widehat\phi_j\widehat v)|(t) 
 \le e^{-\tilde c 2^{2j} t}  |(\gamma \widehat\phi_j\widehat a, 2^j  \widehat\phi_j \widehat a, \widehat\phi_j \widehat v)|(0). 
\end{equation}
Then taking $L^{2}(\R^d)$ norms of both sides of the resulting inequality, 
we obtain 
\begin{equation}\nonumber
\|(\gamma \widehat\phi_j\widehat a, 2^j  \widehat\phi_j \widehat a,  \widehat\phi_j\widehat v)\|_{L^{2}} 
\le e^{-c_02^{2j} t} \|(\gamma \widehat\phi_j\widehat a_0, 2^j \widehat\phi_j  \widehat a_0,  \widehat\phi_j\widehat v_0)\|_{L^{2}}.
\end{equation}
For $\gamma>0$, this implies that for any integer $m$ there exist constants depending only on $m$ 
such that the action of the semigroup $e^{t\widehat A}$ is given by 
\begin{equation}\label{l:lFH.FLp.l}
\|e^{t\widehat A}(\gamma \widehat\phi_j \widehat a_0, \widehat\phi_j \widehat v_0)\|_{L^{2}} 
\le C e^{-c_02^{2j} t} \|(\gamma \widehat\phi_j\widehat a_0, \widehat\phi_j \widehat v_0)\|_{L^{2}} 
\quad\text{for}\quad j \le m
\end{equation}
and
\begin{equation}\label{l:lFH.FLp.h}
\|e^{t\widehat A}(2^j  \widehat\phi_j \widehat a_0,  \widehat\phi_j\widehat v_0)\|_{L^{2}} 
\le C e^{-c_02^{2j} t} \|(2^j  \widehat\phi_j \widehat a_0,  \widehat\phi_j\widehat v_0)\|_{L^{2}}
\quad\text{for}\quad  m<j,
\end{equation}
where $(\widehat a_0, \widehat v_0) =(\widehat a, \widehat v) (0).$ 
Thanks to \eqref{l:lFH.FLp.l} and \eqref{l:lFH.FLp.h}, we deduce 
\begin{equation}\label{l:lFH.FLp.smgrp}
\|e^{t\widehat A}(\gamma\widehat\phi_j  \widehat a_0, 2^j \widehat\phi_j  \widehat a_0,\widehat\phi_j  \widehat v_0)\|_{L^{2}} 
\le C e^{-c_02^{2j} t} 
\|(\gamma\widehat\phi_j  \widehat a_0, 2^j \widehat\phi_j  \widehat a_0, \widehat\phi_j \widehat v_0)\|_{L^{2}}
\quad\text{for all}\quad j \in\Z.
\end{equation}

\medbreak
To handle the source term, splitting the frequency regions into low and high and resorting to Duhamel's formula with the aid of \eqref{l:lFH.FLp.l} 
and \eqref{l:lFH.FLp.h}, we obtain
\begin{equation}\nonumber\begin{aligned}
\|(\gamma\widehat\phi_j\widehat a, 2^j \widehat\phi_j \widehat a,\widehat\phi_j \widehat v)\|_{L^{p'}}
&\lesssim e^{-c_02^{2j} t} \|(\gamma\widehat\phi_j\widehat a_0, 2^j \widehat\phi_j \widehat a_0, \widehat\phi_j\widehat v_0)\|_{L^{2}} \\
&\qquad+  \int_0^{t} 
e^{-c_02^{2j} (t-\tau)} 
\|(\gamma\widehat\phi_j\widehat f, 2^j \widehat\phi_j \widehat f,\widehat\phi_j \widehat g)\|_{L^{2}} d\tau. 
\end{aligned}\end{equation}
Then, time-integration along with the convolution inequality leads to 
a frequency-localized time-space estimate. 
Summing up the resulting inequality over $j \in \Z$ with a weight $2^{js}$, 
we obtain the estimate for the compressible part. 
Since the incompressible component of the velocity 
$w :=(\id -\Lambda^{-1} \div )u$ satisfies a mere heat equation 
$$
\partial_t w - \underline\mu \Delta w =(\id -\Lambda^{-1} \div )g, 
$$
the standard parabolic estimate (Lemma \ref{l:heatbesov}) for $w$ yields the desired estimates for $w$ as well. 
\end{proof}

As a corollary of the proof of Lemma \ref{l:lFH}, we may deduce the following 
estimate which is used to establish decay estimates. 
\begin{lem}\label{l:lFH.decay}
Let $d\ge2$, $s_1, s_2 \in\R$ and $s_1>s_2$. 
Let $e^{tK}$ be the semigroup associated to 
System \eqref{lCNSKp} with $f\equiv g \equiv 0$ and 
initial data $(a_0,m_0) \in \dot B^{s_2}_{2,\infty}(\R^d)$. 
Then there exists a constant $C$ depending on $d$, 
$s_1$ and $s_2$ such that the following estimate holds:
\begin{equation}\nonumber
\|e^{tK} ((\gamma + \Lambda)a_0,m_0)\|_{\dot B^{s_1}_{2,1}} \le 
C t^{-\frac{s_1-s_2}{2}}\|((\gamma + \Lambda)a_0,m_0)\|_{\dot B^{s_2}_{2,\infty}} \quad\text{for all} \quad t>0. 
\end{equation}
\end{lem}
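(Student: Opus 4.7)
The plan is to exploit the fact that the pointwise (in frequency) semigroup estimate
$$\|\dot\Delta_j((\gamma+\Lambda)a(t),m(t))\|_{L^2}\lesssim e^{-c_0 2^{2j} t}\|\dot\Delta_j((\gamma+\Lambda)a_0,m_0)\|_{L^2}$$
for every $j\in\Z$ has already been, essentially, extracted in the proof of Lemma \ref{l:lFH}. Thus the decay estimate for the full semigroup reduces to a routine dyadic summation that trades the exponential factor $e^{-c_0 2^{2j} t}$ for a polynomial loss $t^{-(s_1-s_2)/2}$.

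First I would split $m=w+\mathcal{P}^{\bot}m$ into divergence-free and potential parts and set $v:=\Lambda^{-1}\div m$. Since the projectors commute with $\dot\Delta_j$, we have $\|\dot\Delta_j m\|_{L^2}\cong \|\dot\Delta_j v\|_{L^2}+\|\dot\Delta_j w\|_{L^2}$. For $w$, which satisfies the pure heat equation $\partial_t w-\underline\mu\Delta w=0$, one gets $\|\dot\Delta_j w(t)\|_{L^2}\le e^{-c 2^{2j}t}\|\dot\Delta_j w_0\|_{L^2}$ directly. For $(a,v)$, I localize the system \eqref{lCNSK.qv} to the block $\dot\Delta_j$ and reapply the Lyapunov-function argument used to derive \eqref{l:lFH.FLp.smgrp}: this yields, for every $j\in\Z$,
$$\|\dot\Delta_j((\gamma+\Lambda)a(t),v(t))\|_{L^2}\le C e^{-c_0 2^{2j}t}\|\dot\Delta_j((\gamma+\Lambda)a_0,v_0)\|_{L^2}.$$
Combining this with the $\ell^1$-summation defining $\dot B^{s_1}_{2,1}$ gives
$$\|e^{tK}((\gamma+\Lambda)a_0,m_0)\|_{\dot B^{s_1}_{2,1}}\lesssim \sum_{j\in\Z}2^{j(s_1-s_2)}e^{-c_0 2^{2j}t}\bigl(2^{js_2}\|\dot\Delta_j((\gamma+\Lambda)a_0,m_0)\|_{L^2}\bigr).$$
Factoring out the $\ell^\infty$-norm $\|((\gamma+\Lambda)a_0,m_0)\|_{\dot B^{s_2}_{2,\infty}}$ reduces the matter to proving
$$S(t):=\sum_{j\in\Z}2^{j(s_1-s_2)}e^{-c_0 2^{2j}t}\lesssim t^{-(s_1-s_2)/2}.$$

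The only point requiring attention is this last numerical sum, which I would handle by the standard low/high frequency split at the threshold $2^j\sim t^{-1/2}$: for $2^j\le t^{-1/2}$ one uses $e^{-c_0 2^{2j}t}\le 1$ and the convergence of the geometric series (this is where the assumption $s_1>s_2$ is crucial to sum the negative-index tail), giving a contribution of order $t^{-(s_1-s_2)/2}$; for $2^j>t^{-1/2}$ one estimates $e^{-c_0 2^{2j}t}\le C_N(2^{2j}t)^{-N}$ with $N$ chosen larger than $(s_1-s_2)/2$, and the resulting geometric series again produces $t^{-(s_1-s_2)/2}$. No genuine obstacle is expected; the only subtle point is ensuring that the dyadic bound extracted from the proof of Lemma \ref{l:lFH} is uniform in $j\in\Z$ (including both $j\le j_0$ and $j>j_0$, with the $(\gamma+\Lambda)$ weight playing the proper role in each regime), which follows from \eqref{l:lFH.FLp.smgrp}.
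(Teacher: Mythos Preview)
Your proposal is correct and follows essentially the same route as the paper: both start from the dyadic semigroup bound \eqref{l:lFH.FLp.smgrp} (together with the heat estimate for the incompressible part $w$), factor out the $\dot B^{s_2}_{2,\infty}$ norm, and reduce to the numerical sum $\sum_{j\in\Z}(t^{1/2}2^{j})^{s_1-s_2}e^{-c_02^{2j}t}$. The only cosmetic difference is that the paper invokes Lemma~\ref{lem:unifbd} to bound this sum, whereas you prove that bound inline via the low/high split at $2^j\sim t^{-1/2}$.
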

\begin{proof}
From \eqref{l:lFH.FLp.smgrp} and the corresponding estimate for $w$, we 
readily deduce  
\begin{equation}\nonumber
\|e^{t K}(  \Delta_j (\gamma + \Lambda) a_0,\Delta_j m_0)\|_{L^{2}} 
\le C e^{-c_02^{2j} t} 
\|(\Delta_j (\gamma + \Lambda) a_0, \Delta_j m_0)\|_{L^{2}}
\end{equation}
for all $j\in\Z$. 
By Lemma \ref{lem:unifbd} with $r\equiv 1$, we readily have 
\begin{equation}\nonumber\begin{aligned}
&\left\| \left\{ 2^{js_1} \|e^{t K}(  \Delta_j (\gamma + \Lambda) a_0,\Delta_j m_0)\|_{L^{2}} 
\right\}_{j\in\Z} \right\|_{\ell^{1}} \\
&= C t^{-\frac{s_1-s_2}{2}} \left\| \left\{ (t^{\frac{1}{2}} 2^{j})^{(s_1-s_2)} e^{-c_02^{2j} t}  \right\}_{j\in\Z}
\right\|_{\ell^{1}} \|((\gamma + \Lambda)a_0,m_0)\|_{\dot B^{s_2}_{2,\infty}} \\
& \le C t^{-\frac{s_1-s_2}{2}}\|((\gamma + \Lambda)a_0,m_0)\|_{\dot B^{s_2}_{2,\infty}}.
\end{aligned}\end{equation}
\end{proof}

\section{Nonlinear problem}
In this section, we show the existence and uniqueness of 
global solution to the nonlinear problem \eqref{CNSKp:rho.m}. 

\smallbreak
\subsection{Banach's fixed point theorem}
We first prove the following auxiliary lemma, 
which is a generalization of Lemma 2.1 in \cite{CheGal2009} to bi-
and-trilinear operators. 
\begin{lem}\label{l:fxd.pt.bi.tri.lin}
Let $(X, \|\cdot\|_X)$ be a Banach space. Let $B:X \times X \to X$
be a bilinear continuous operator with norm $K_2$
and $T:X\times X \times X \to X$ be a trilinear continuous operators with norm $K_3$. 
Let further $L :X\to X$ be a continuous linear operator with norm $N<1.$ 
Then for all $y\in X$ such that 
$$
\|y\|_{X} <  \min \left\{\frac{1-N}{2}, \frac{(1-N)^2}{2(2K_2+ 3 K_3)} \right\}, 
$$
Equation $x = y+ L(x) + B(x,x) + T(x,x, x)$ 
has a unique solution $x$ in the ball $B^X_{\tilde R}(0)$ 
of center 0 and radius $\tilde R:=\min \left\{1, \frac{1-N}{2K_2+ 3 K_3 } \right\}.$ 
In addition, $x$ satisfies 
\begin{equation}\label{l:fxd.pt.bi.tri.lin.bnd}
\|x\|_{X} \le \frac{2}{1-N} \|y\|_X. 
\end{equation}
\end{lem}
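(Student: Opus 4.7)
The plan is to reduce the equation to a fixed-point problem and apply the standard Banach fixed-point theorem. Define the operator $\Phi : X \to X$ by
\[
\Phi(x) := y + L(x) + B(x,x) + T(x,x,x),
\]
so that solving $x = y + L(x) + B(x,x) + T(x,x,x)$ amounts to finding a fixed point of $\Phi$. The two ingredients I would verify are that $\Phi$ maps the closed ball $\overline{B^X_{\tilde R}(0)}$ into itself and that it is a strict contraction there. The resulting unique fixed point is the desired $x$.

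For the self-map property, from the triangle inequality and the operator norms I would bound, for $\|x\|_X \le \tilde R$,
\[
\|\Phi(x)\|_X \le \|y\|_X + N \tilde R + K_2 \tilde R^2 + K_3 \tilde R^3 = \|y\|_X + \tilde R\bigl(N + K_2 \tilde R + K_3 \tilde R^2\bigr).
\]
Since $\tilde R \le 1$ and $\tilde R \le \frac{1-N}{2K_2+3K_3}$, one has $K_2\tilde R + K_3 \tilde R^2 \le (K_2+K_3)\tilde R \le \frac{1-N}{2}$, using the elementary inequality $\frac{K_2+K_3}{2K_2+3K_3}\le \frac12$. Thus $\|\Phi(x)\|_X \le \|y\|_X + \tilde R \frac{1+N}{2}$, which is $\le \tilde R$ exactly under the hypothesis $\|y\|_X < \min\bigl\{\frac{1-N}{2}, \frac{(1-N)^2}{2(2K_2+3K_3)}\bigr\}$ (the two bounds correspond to the two possible values of $\tilde R$).

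For the contraction property, I would use the standard factorisations
\[
B(x_1,x_1)-B(x_2,x_2) = B(x_1-x_2,x_1) + B(x_2,x_1-x_2),
\]
\[
T(x_1,x_1,x_1)-T(x_2,x_2,x_2) = T(x_1-x_2,x_1,x_1) + T(x_2,x_1-x_2,x_1) + T(x_2,x_2,x_1-x_2),
\]
which give
\[
\|\Phi(x_1)-\Phi(x_2)\|_X \le \bigl(N + 2K_2 \tilde R + 3K_3 \tilde R^2\bigr)\|x_1-x_2\|_X.
\]
Again the choice of $\tilde R$ (and $\tilde R \le 1$) ensures $2K_2\tilde R + 3K_3 \tilde R^2 \le (2K_2+3K_3)\tilde R \le 1-N$, so the Lipschitz constant is at most $1$; the strict inequality in the hypothesis on $\|y\|_X$ forces the effective iterate to live in a slightly smaller ball where the constant is strictly less than $1$, so Banach's theorem applies.

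Finally, for the a priori bound \eqref{l:fxd.pt.bi.tri.lin.bnd}, I would plug the fixed point back into the equation to get
\[
(1-N)\|x\|_X \le \|y\|_X + K_2 \|x\|_X^2 + K_3 \|x\|_X^3 \le \|y\|_X + (K_2+K_3)\tilde R \|x\|_X \le \|y\|_X + \tfrac{1-N}{2}\|x\|_X,
\]
whence $\|x\|_X \le \frac{2}{1-N}\|y\|_X$. The only mildly delicate step is the bookkeeping on $\tilde R$: in both regimes $\tilde R=1$ and $\tilde R=\frac{1-N}{2K_2+3K_3}$ one has to check that the same pair of hypotheses on $\|y\|_X$ suffices; apart from this, everything follows the classical Picard iteration scheme as in \cite{CheGal2009}, extended from the purely bilinear case to include the linear contraction $L$ and the trilinear term $T$.
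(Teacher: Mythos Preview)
Your approach is essentially the same as the paper's: define $\Phi(x)=y+L(x)+B(x,x)+T(x,x,x)$ and apply Banach's fixed-point theorem via the self-map and contraction estimates, using the same telescoping factorisations for $B$ and $T$. The only organisational difference is that the paper fixes the working radius as $R:=\frac{2}{1-N}\|y\|_X$ from the outset and shows $\Phi$ is a strict contraction on $\overline{B^X_R(0)}$; since $R<\tilde R$ under the hypothesis on $\|y\|_X$, the contraction constant $N+(2K_2+3K_3)R$ is automatically strictly less than $1$, and the bound \eqref{l:fxd.pt.bi.tri.lin.bnd} falls out immediately as $\|x\|_X\le R$. Your choice to work directly on $\overline{B^X_{\tilde R}(0)}$ yields only a Lipschitz constant $\le 1$, which is why you need the somewhat informal ``slightly smaller ball'' remark; replacing $\tilde R$ by $R=\frac{2}{1-N}\|y\|_X$ (as the paper does) removes that wrinkle and simultaneously gives the a~priori bound without the separate computation you perform at the end.
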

\begin{proof}
Although this is standard, we prove the lemma for completeness. 
Letting $\Phi : x \mapsto y+ L(x)+ B(x,x) + T(x,x,x),$ 
we prove that $\Phi$ is a contraction map in some ball.
Let $I:=\|y\|_{X}$ and let $x$ be in some closed ball $\overline{B^X_{R}(0)}$ of $X$.
Assuming $R\le 1,$ we have 
\begin{equation}\label{l:fxd.pt.bi.tri.lin:pr1}
\begin{aligned}
\|\Phi(x)\|_{X} &\le \|y\|_{X} +\|L(x)\|_{X}+\|B(x,x)\|_X +\|T(x,x,x)\|_{X} \\
 &\le I+ N R + (K_2 + K_3 )R^2.
\end{aligned}
\end{equation}
Let $R$ be defined by 
\begin{equation}\label{l:fxd.pt.bi.tri.lin:d.R}
R := \frac{2}{1-N} I. 
\end{equation}
Then we have from \eqref{l:fxd.pt.bi.tri.lin:pr1}
\begin{equation}\nonumber
\|\Phi(x)\|_{X} 
\le \left( \frac{1-N}{2} + N + (K_2 + K_3) R \right) R.
\end{equation}
Thus, suffices to take $R$ such that  
\begin{equation}\label{l:fxd.pt.bi.tri.lin:stab}
R \le \min \left\{ 1, \frac{1-N}{2(K_2 + K_3)} \right\}.
\end{equation}
for $\Phi$ to be a self-map on the closed ball $\overline{B^X_R(0)}.$ 

\smallbreak
On the other hand, 
we have for any $x_1, x_2 \in \overline{B^X_R(0)}$ and 
for $R\le 1$,
$$
\begin{aligned}
\|\Phi&(x_1)-\Phi(x_2)\|_{X} 
\le \|L(x_1-x_2)\|_X  
+ \|B(x_1-x_2, x_1)\|_{X}+ \|B(x_1, x_1-x_2)\|_{X} \\
&\quad + \|T(x_1-x_2, x_1, x_1)\|_{X} 
+ \|T(x_1, x_1-x_2, x_1)\|_{X} 
+ \|T(x_2, x_2, x_1-x_2)\|_{X} \\
 &\le (N + 2K_2R + 3 K_3 R^2) \|x_1-x_2\|_{X} 
 \le (N + (2K_2+ 3 K_3 )R) \|x_1-x_2\|_{X}. 
\end{aligned}
$$ 
Hence, in order for $\Phi$ to be a contraction, 
$N + (2K_2+ 3 K_3 )R$ has to hold; this amounts to taking $R$ such that
\begin{equation}\label{l:fxd.pt.bi.tri.lin:cntr}
R < \min \left\{1, \frac{1-N}{2K_2+ 3 K_3 } \right\}.
\end{equation}
Thus, from \eqref{l:fxd.pt.bi.tri.lin:d.R}, 
\eqref{l:fxd.pt.bi.tri.lin:stab} and \eqref{l:fxd.pt.bi.tri.lin:cntr}, 
Banach's fixed point theorem is applicable for all $y \in X$ such that 
$$
 \frac{2}{1-N} \|y\|_{X} < \min \left\{1, \frac{1-N}{2K_2+ 3 K_3 } \right\}, 
\ \text{i.e.,} \ 
\|y\|_{X} <  \min \left\{\frac{1-N}{2}, \frac{(1-N)^2}{2(2K_2+ 3 K_3)} \right\}, 
$$
The uniqueness of the fixed point in the ball naturally follows. 
Routine computations as in the stability estimate 
lead to \eqref{l:fxd.pt.bi.tri.lin.bnd}. 
\end{proof}

\smallbreak
\subsection{Global results in $L^2(\R^d)$-based spaces when $\gamma>0$}
Lemma \ref{l:lFH} yields the following. 
\begin{cor}\label{c:lL2}
Let $d\ge2$, $T>0$, $s\in\R$, $\gamma\ge0$ 
and 
$$
((\gamma+\Lambda) f, g) 
	\in \widetilde{L^1}(0,T; \dot{B}^{s}_{2,\sigma}(\R^d)).
$$
Assume that $(a,m)$ is a solution for \eqref{lCNSKp} such that 
\begin{equation}\nonumber
((\gamma+\Lambda) a, m) \in \widetilde{L^\infty}(0,T; \dot{B}^s_{2,\sigma}(\R^d))
\cap \widetilde{L^1}(0,T;\dot{B}^{s+2}_{2,\sigma}(\R^d)).
\end{equation} 
Then there exists some positive constant $C$ depending only on $\rho_*,$ $P$, 
$\lambda$, $\mu$, $\kappa$ and $d$
such that the following estimate hold : if $\gamma\ge0,$ then
\begin{equation}\nonumber
\begin{aligned}
\|((\gamma+\Lambda) a, m)&\|_{\widetilde{L^\infty_t}(\dot{B}^s_{2,\sigma})}
+ \|\Lambda^2 ((\gamma+\Lambda) a, m)\|_{\widetilde{L^1_t}(\dot{B}^{s}_{2,\sigma})} \\
& \le C \big( \|((\gamma+\Lambda) a_0, m_0)\|_{\dot{B}^{s}_{2,\sigma}} 
+\|((\gamma+\Lambda) f, g)\|_{\widetilde{L^1_t}(\dot{B}^{s}_{2,\sigma})} \big)
\end{aligned}\end{equation}
for all $0 <t \le T.$
\end{cor}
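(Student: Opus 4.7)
The plan is to deduce this corollary directly from Lemma \ref{l:lFH} by specializing the parameters $r$ and $q$ and then summing the two resulting estimates. First, with the choice $r=1$ the source-regularity index $s-2+\frac{2}{r}$ appearing in Lemma \ref{l:lFH} collapses to $s$, which is exactly the regularity assumed on $((\gamma+\Lambda)f,g)$ in the corollary; this identifies the right-hand side of the lemma with the right-hand side we are aiming for.

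Next, I would invoke Lemma \ref{l:lFH} with $(q,r)=(\infty,1)$, which gives the $L^\infty_t$--$\dot B^s_{2,\sigma}$ endpoint bound
\begin{equation*}
\|((\gamma+\Lambda)a,m)\|_{\widetilde{L^\infty_t}(\dot B^s_{2,\sigma})}
\le C\bigl(\|((\gamma+\Lambda)a_0,m_0)\|_{\dot B^s_{2,\sigma}}+\|((\gamma+\Lambda)f,g)\|_{\widetilde{L^1_t}(\dot B^s_{2,\sigma})}\bigr),
\end{equation*}
accounting for the first term on the left-hand side of the corollary. Then, applying the same lemma with $(q,r)=(1,1)$, for which $s+\frac{2}{q}=s+2$, yields
\begin{equation*}
\|((\gamma+\Lambda)a,m)\|_{\widetilde{L^1_t}(\dot B^{s+2}_{2,\sigma})}
\le C\bigl(\|((\gamma+\Lambda)a_0,m_0)\|_{\dot B^s_{2,\sigma}}+\|((\gamma+\Lambda)f,g)\|_{\widetilde{L^1_t}(\dot B^s_{2,\sigma})}\bigr).
\end{equation*}
By the Fourier-support localization of the dyadic blocks $\dot\Delta_j$ and the very definition of the Chemin--Lerner norm, the left-hand side of this second inequality is equivalent, up to a harmless multiplicative constant depending only on the choice of $j_0$, to $\|\Lambda^2((\gamma+\Lambda)a,m)\|_{\widetilde{L^1_t}(\dot B^s_{2,\sigma})}$. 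Summing the two displayed inequalities produces the claimed bound.

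The only point that requires a brief check is whether the hypothesis of Lemma \ref{l:lFH} that $((\gamma+\Lambda)a,m)\in \widetilde{L^q}(0,T;\dot B^{s+\frac{2}{q}}_{2,\sigma})$ is available for the values $q=\infty$ and $q=1$ we wish to use; however, this is precisely the standing regularity assumption of the corollary at both endpoints, so no interpolation argument is needed. There is no substantive obstacle here beyond bookkeeping: the corollary is a specialization of Lemma \ref{l:lFH} to the endpoints $(q,r)=(\infty,1)$ and $(q,r)=(1,1)$, with a minor rewriting of the $\dot B^{s+2}_{2,\sigma}$ norm as a $\dot B^s_{2,\sigma}$ norm of $\Lambda^2$ of the unknown.
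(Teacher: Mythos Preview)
Your proposal is correct and matches the paper's approach: the paper presents this corollary as an immediate consequence of Lemma~\ref{l:lFH} (``Lemma~\ref{l:lFH} yields the following''), which amounts precisely to specializing to $r=1$ and taking $q=\infty$ and $q=1$. One small inaccuracy: the equivalence $\|u\|_{\widetilde{L^1_t}(\dot B^{s+2}_{2,\sigma})}\cong\|\Lambda^2 u\|_{\widetilde{L^1_t}(\dot B^{s}_{2,\sigma})}$ follows from Bernstein's inequality on each dyadic block and depends only on $d$ and the Littlewood--Paley cutoff $\phi$, not on the threshold $j_0$.
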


Given initial data $(a_0,m_0)$ such that 
$((\gamma+\Lambda) a_0, m_0) \in \dot{B}^{\frac{d}{2}-1}_{2,1}(\R^d)$, 
in order to solve \eqref{CNSKp:rho.m}, it suffices to show that the map
\begin{equation}\label{CNSK.map}
\Phi : (b,n) \mapsto (a,m)
\end{equation}
with $(a,  m)$ the solution to 
\begin{equation}\label{CNSKp.scheme}
\left\{\begin{aligned}
&\partial_t  a + \div  m = 0 ,\\
&\partial_t  m - \frac{1}{\rho_*} \mathcal{L}  m  
    -\kappa \rho_* \nabla\Delta  a + \gamma \nabla  a = N(b,n), \\
&(a,  m)_{t=0} =(a_0,m_0),
\end{aligned} \right.
\end{equation}
where 
\begin{equation}\label{CNSKp:mom.nonl.trm}
\begin{aligned}
N(b,n) &:= \mathcal{L}\left( Q(b) n\right) 
    +  \kappa b \nabla\Delta b 
     - \frac{1}{\rho_*}  \div(n\otimes n) \\
&\qquad
    -\div\left( Q(b)n\otimes n \right) 
    +\left( P'(b+\rho_*)- \gamma \right) \nabla b,
\end{aligned}\end{equation}
has a fixed point in a ball of a suitable metric space. 

We define a Banach space $CL_T$ by 
\begin{equation}\nonumber
CL_T := \widetilde{L^\infty}(0,T;\dot B^{\frac{d}{2}-1}_{2,1}(\R^d)) 
		\cap \widetilde{L^1}(0,T;\dot B^{\frac{d}{2}+1}_{2,1}(\R^d))
\end{equation}
equipped with a norm 
\begin{equation}\nonumber
\|(a,m)\|_{CL_T} := \|((\gamma+\Lambda) a, m)\|_{\widetilde{L^\infty_T}(\dot B^{\frac{d}{2}-1}_{2,1})} 
		+ \|\Lambda^2 ((\gamma+\Lambda) a, m)\|_{\widetilde{L^1_T}(\dot B^{\frac{d}{2}-1}_{2,1})}.
\end{equation}
By interpolation, we may readily see that $(a,m) \in CL_T$ further satisfies 
\begin{equation}\label{CLT.intp}
((\gamma+\Lambda) a, m) \in 
\widetilde{L^q}(0,T;\dot B^{\frac{d}{2}-1+\frac{2}{q}}_{2,1}(\R^d))
\quad\text{for}\quad q \in [1,\infty]. 
\end{equation}

Recall that $N$ defined in \eqref{CNSKp:mom.nonl.trm} is 
a combination of bi-and-trilinear terms. We have the following nonlinear estimates. 
\begin{lem}\label{l:CNSK.nonl.est}
Let $d\ge 2$ and $T>0$. 
Let $N$ be as defined in \eqref{CNSKp:mom.nonl.trm} and 
let $P$ be a smooth function such that $\gamma\ge0$. 
Then there exists a positive constant $C$ depending only on $\rho_*,$ $P$, 
$\lambda$, $\mu$, $\kappa$ and $d$ such that 
\begin{equation}\label{l:CNSK.nonl.est:inq1}
\|N(b,n)\|_{\widetilde{L^1_t}(\dot{B}^{\frac{d}{2}-1}_{2,1})} 
\le C (\|(b,n)\|_{CL_t}^2+\|(b,n)\|_{CL_t}^3)
\end{equation}
for all $(b,n) \in CL_T$, $0 \le t \le T$, provided further that $\gamma>0$.

Furthermore, there exists a positive constant $C$
depending only on $\rho_*$, $P$, 
$\lambda$, $\mu$, $\kappa$ and $d$ such that 
\begin{equation}\label{l:CNSK.nonl.est:inq2}
\|N(b,n)\|_{\widetilde{L^1_t}(\dot{B}^{\frac{d}{2}-1}_{2,1})} 
\le C ((1+t)\|(b,n)\|_{CL_t}^2+\|(b,n)\|_{CL_t}^3)
\end{equation}
for all $(b,n) \in CL_T$, $0 \le t \le T$, provided further that $\gamma=0$.
\end{lem}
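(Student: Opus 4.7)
The plan is to estimate each of the five terms in
\begin{equation*}
N(b,n) = \mathcal{L}(Q(b)n) + \kappa\,b\nabla\Delta b - \tfrac{1}{\rho_*}\div(n\otimes n) - \div(Q(b)\,n\otimes n) + (P'(b+\rho_*)-\gamma)\nabla b
\end{equation*}
separately, using product and composition estimates in homogeneous Besov spaces within the Chemin--Lerner framework. Since the target space is $\widetilde{L^1_t}(\dot B^{d/2-1}_{2,1})$, the key preliminary step is to rewrite the two bilinear $b$-terms in divergence form, so as to save one derivative. Namely,
\begin{equation*}
\kappa\,b\nabla\Delta b = \kappa\nabla(b\Delta b) - \kappa\div(\nabla b\otimes\nabla b) + \tfrac{\kappa}{2}\nabla|\nabla b|^2,
\end{equation*}
and, introducing $H(s):=P(s+\rho_*)-P(\rho_*)-\gamma s$ (so that $H(0)=H'(0)=0$),
\begin{equation*}
(P'(b+\rho_*)-\gamma)\nabla b = \nabla H(b).
\end{equation*}
This reduces the task to bounding $b\Delta b$, $\nabla b\otimes\nabla b$, $|\nabla b|^2$, $H(b)$, $Q(b)n$ and $Q(b)\,n\otimes n$ in suitable $\widetilde{L^1_t}(\dot B^{s}_{2,1})$ with $s\in\{d/2,d/2+1\}$. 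Bony paraproduct/product estimates based on the algebra $\dot B^{d/2}_{2,1}(\R^d)$ and composition estimates exploiting smallness of $b$ in $\dot B^{d/2}_{2,1}$ apply to each piece, provided every intermediate Besov--Lebesgue norm of $b$ and $n$ can be dominated by $\|(b,n)\|_{CL_t}$.

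When $\gamma>0$, the $CL_t$-norm controls both the low-frequency norms $\gamma\|b\|^l_{\widetilde{L^\infty_t}(\dot B^{d/2-1}_{2,1})}$ and $\gamma\|b\|^l_{\widetilde{L^1_t}(\dot B^{d/2+1}_{2,1})}$, via the $\gamma$ part of $\gamma+\Lambda$, and the high-frequency norms $\|b\|^h_{\widetilde{L^\infty_t}(\dot B^{d/2}_{2,1})}$, $\|b\|^h_{\widetilde{L^1_t}(\dot B^{d/2+2}_{2,1})}$, via $\Lambda$, together with the standard two endpoint norms for $n$. Low-frequency Bernstein together with Chemin--Lerner time-interpolation then supplies every intermediate $\widetilde{L^q_t}(\dot B^\sigma_{2,1})$ norm required by the product estimates, so each of the five pieces is controlled cleanly by $\|(b,n)\|_{CL_t}^2$ (bilinear) or $\|(b,n)\|_{CL_t}^3$ (trilinear), yielding \eqref{l:CNSK.nonl.est:inq1} uniformly in $t$.

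When $\gamma=0$ the $CL_t$-norm controls only $\|b\|_{\widetilde{L^\infty_t}(\dot B^{d/2}_{2,1})}$ and $\|b\|_{\widetilde{L^1_t}(\dot B^{d/2+2}_{2,1})}$; no low-frequency information on $b$ below the regularity scale $\dot B^{d/2}$ is available. The viscous, convective and trilinear terms can still be closed without any time factor thanks to the algebra property of $\dot B^{d/2}_{2,1}$ and time-interpolation, and any stray $t^{1/2}$ coming from interpolation is absorbed via $1+t^{1/2}\le 2(1+t)$. The sole genuine source of a time factor is the pressure term $\nabla H(b)$, whose leading behaviour is that of $\nabla(b^2)$: using the algebra inequality and Hölder in time,
\begin{equation*}
\|b^2\|_{\widetilde{L^1_t}(\dot B^{d/2}_{2,1})} \le \int_0^t \|b(\tau)\|_{\dot B^{d/2}_{2,1}}^2\,d\tau \le t\,\|b\|_{L^\infty_t(\dot B^{d/2}_{2,1})}^2 \lesssim t\,\|(b,n)\|_{CL_t}^2,
\end{equation*}
which produces exactly the $(1+t)\|(b,n)\|_{CL_t}^2$ contribution in \eqref{l:CNSK.nonl.est:inq2}.

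The main obstacle is the careful low/high-frequency bookkeeping matching the two CL-norm components with the intermediate Besov norms demanded by the product estimates; this is especially delicate in the second-order term $\mathcal{L}(Q(b)n)$, where one factor must carry the $\dot B^{d/2}_{2,1}$-regularity and the other the $\dot B^{d/2+1}_{2,1}$-regularity, and in the trilinear term. Once this splitting is fixed and Chemin--Lerner time-interpolation is invoked, every term falls into place.
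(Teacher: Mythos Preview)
Your proposal is essentially correct and leads to the same conclusions, but it takes a route that is more elaborate than the paper's and carries a couple of misleading statements.

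The paper does \emph{not} rewrite $\kappa b\nabla\Delta b$ in divergence form: it simply applies the product estimate (Lemma~\ref{lem:prd.p=2} with $s_1=\tfrac d2$, $s_2=\tfrac d2-1$) to obtain
\[
\|b\,\nabla\Delta b\|_{\widetilde{L^1_t}(\dot B^{\frac d2-1}_{2,1})}
\lesssim \|b\|_{\widetilde{L^\infty_t}(\dot B^{\frac d2}_{2,1})}\,
\|\Delta b\|_{\widetilde{L^1_t}(\dot B^{\frac d2}_{2,1})},
\]
both factors being in $CL_t$ via the $\Lambda$--part. Your divergence rewriting is valid but produces exactly the same regularity demands on $b$; it does \emph{not} ``save one derivative''. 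Similarly, for the pressure term when $\gamma>0$ the paper does not use $\nabla H(b)$: it keeps the product $(P'(b+\rho_*)-\gamma)\nabla b$ and bounds it directly by
\[
\|b\|_{\widetilde{L^\infty_t}(\dot B^{\frac d2-1}_{2,1})}\,
\|\nabla b\|_{\widetilde{L^1_t}(\dot B^{\frac d2}_{2,1})},
\]
where \emph{both} factors come from the $\gamma b$--component of $(\gamma+\Lambda)b$. Your $\nabla H(b)$ route works too (one ultimately needs $\|b\|_{\widetilde{L^2_t}(\dot B^{d/2}_{2,1})}$, obtainable by interpolating the $\gamma b$ endpoints), but is less direct. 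Note also that no explicit low/high splitting is needed here: the paper simply records that $\|(\gamma+\Lambda)b\|_{\dot B^{s}_{2,1}}$ is equivalent to $\gamma\|b\|_{\dot B^{s}_{2,1}}+\|\Lambda b\|_{\dot B^{s}_{2,1}}$ and interpolates in time.

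For $\gamma=0$ your identification of the pressure term as the unique source of the $(1+t)$ factor is correct and matches the paper exactly. However, your parenthetical about ``any stray $t^{1/2}$'' is unnecessary: when executed as above, every term other than the pressure closes with no time factor at all. The divergence rewriting you perform here is precisely the device the paper reserves for the \emph{next} lemma (the $\gamma=0$ estimate in $\dot B^{\frac d2-3}_{2,1}$), where it genuinely matters for $d=3$; for the present statement it buys nothing.
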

\begin{proof}
Taking $q=1,2,\infty$ in \eqref{CLT.intp}, we in particular have 
\begin{equation}\label{CLT.intp.pr}
(\gamma b,\Lambda b, n) \in 
 \widetilde{L^\infty}(0,T;\dot B^{\frac{d}{2}-1}_{2,1}(\R^d))
\cap \widetilde{L^2}(0,T;\dot B^{\frac{d}{2}}_{2,1}(\R^d))
\cap \widetilde{L^1}(0,T;\dot B^{\frac{d}{2}+1}_{2,1}(\R^d))
\end{equation}
for all $(b,n) \in CL_T.$ 
Note that $Q$ satisfies 
$Q'(b) = \frac{1}{(b+\rho_*)^2}$ and $Q'(0)=\frac{1}{\rho_*^2}>0$. 
Thus, thanks to Lemma \ref{l:comp} and Lemma \ref{lem:prd.p=2}, 
we have 
\begin{equation}\label{l:CNSK.nonl.est.pr1}\begin{aligned}
\|&\mathcal{L}\left( Q(b) n\right) \|_{\widetilde{L^1_t}(\dot{B}^{\frac{d}{2}-1}_{2,1})} 
\le C \|\nabla( Q(b)) n + Q(b) \div n \id\|_{\widetilde{L^1_t}(\dot{B}^{\frac{d}{2}}_{2,1})} \\
&\le C \|(Q'(b) -Q'(0) )(\nabla b) n \|_{\widetilde{L^1_t}(\dot{B}^{\frac{d}{2}}_{2,1})}
	+ \| Q'(0) (\nabla b) n\|_{\widetilde{L^1_t}(\dot{B}^{\frac{d}{2}}_{2,1})} 
	+ \|Q(b) \div n \|_{\widetilde{L^1_t}(\dot{B}^{\frac{d}{2}}_{2,1})} \\
&\le C \bigg( \|b\|_{\widetilde{L^\infty_t}(\dot{B}^{\frac{d}{2}}_{2,1})} 
	\|\nabla b \|_{\widetilde{L^2_t}(\dot{B}^{\frac{d}{2}}_{2,1})} 
	\|n\|_{\widetilde{L^2_t}(\dot{B}^{\frac{d}{2}}_{2,1})}
+  \|\nabla b\|_{\widetilde{L^2_t}(\dot{B}^{\frac{d}{2}}_{2,1})} 
	\|n\|_{\widetilde{L^2_t}(\dot{B}^{\frac{d}{2}}_{2,1})}\\
&\qquad\qquad  + \|b \|_{\widetilde{L^\infty_t}(\dot{B}^{\frac{d}{2}}_{2,1})}
		\|\div n \|_{\widetilde{L^1_t}(\dot{B}^{\frac{d}{2}}_{2,1})} \bigg).
\end{aligned}\end{equation}
Now, a standard interpolation argument ensures that 
the right hand-side of \eqref{l:CNSK.nonl.est.pr1} is bounded by 
\begin{equation}\nonumber
\|(b,n)\|_{CL_T}^2+\|(b,n)\|_{CL_T}^3.
\end{equation}
Similarly, using \eqref{CLT.intp.pr}, we have 
\begin{equation}\label{l:CNSK.nonl.est.pr2}
\|\kappa b \nabla\Delta b \|_{\widetilde{L^1_t}(\dot{B}^{\frac{d}{2}-1}_{2,1})} 
\le C \|b \|_{\widetilde{L^\infty_t}(\dot{B}^{\frac{d}{2}}_{2,1})} 
	\|\nabla\Delta b \|_{\widetilde{L^1_t}(\dot{B}^{\frac{d}{2}-1}_{2,1})}
\le C \|b \|_{\widetilde{L^\infty_t}(\dot{B}^{\frac{d}{2}}_{2,1})} 
	\|\Delta b \|_{\widetilde{L^1_t}(\dot{B}^{\frac{d}{2}}_{2,1})};
\end{equation}
\begin{equation}\label{l:CNSK.nonl.est.pr3}
\left\|\frac{1}{\rho_*}  \div(n\otimes n) \right\|_{\widetilde{L^1_t}(\dot{B}^{\frac{d}{2}-1}_{2,1})} 
\le C \| n\otimes n \|_{\widetilde{L^1_t}(\dot{B}^{\frac{d}{2}}_{2,1})} 
\le C \|n\|_{\widetilde{L^2_t}(\dot{B}^{\frac{d}{2}}_{2,1})}^2;
\end{equation}
\begin{equation}\label{l:CNSK.nonl.est.pr4}\begin{aligned}
\|\div\left( Q(b)n\otimes n \right)  \|_{\widetilde{L^1_t}(\dot{B}^{\frac{d}{2}-1}_{2,1})} 
\le C\|Q(b)n\otimes n \|_{\widetilde{L^1_t}(\dot{B}^{\frac{d}{2}}_{2,1})}  
\le C\|b\|_{\widetilde{L^\infty_t}(\dot{B}^{\frac{d}{2}}_{2,1})}
	\|n\|_{\widetilde{L^2_t}(\dot{B}^{\frac{d}{2}}_{2,1})}^2.
\end{aligned}\end{equation}
For the pressure term when $\gamma>0$,  
\begin{equation}\label{l:CNSK.nonl.est.pr5}\begin{aligned}
\|\left( P'(b+\rho_*)- P'(\rho_*) \right) \nabla b \|_{\widetilde{L^1_t}(\dot{B}^{\frac{d}{2}-1}_{2,1})} 
&\le C\| P'(b+\rho_*)- P'(\rho_*) \|_{\widetilde{L^\infty_t}(\dot{B}^{\frac{d}{2}-1}_{2,1})} 
 \|\nabla b \|_{\widetilde{L^1_t}(\dot{B}^{\frac{d}{2}}_{2,1})}  \\
&\le C\| b\|_{\widetilde{L^\infty_t}(\dot{B}^{\frac{d}{2}-1}_{2,1})} 
 \|\nabla b \|_{\widetilde{L^1_t}(\dot{B}^{\frac{d}{2}}_{2,1})},\\
\end{aligned}\end{equation}
where we have used the regularity for $\gamma b$ in \eqref{CLT.intp.pr}.
Thus, combining 
\eqref{l:CNSK.nonl.est.pr1},
\eqref{l:CNSK.nonl.est.pr2},
\eqref{l:CNSK.nonl.est.pr3},
\eqref{l:CNSK.nonl.est.pr4},
and \eqref{l:CNSK.nonl.est.pr5}, 
we have 
\begin{equation}\nonumber\begin{aligned}
\|N(b,n)\|_{\widetilde{L^1_t}(\dot{B}^{\frac{d}{2}-1}_{2,1})}
&\lesssim 
(\|b\|_{\widetilde{L^\infty_t}(\dot{B}^{\frac{d}{2}}_{2,1})} +1)
	\|\nabla b \|_{\widetilde{L^2_t}(\dot{B}^{\frac{d}{2}}_{2,1})} 
	\|n\|_{\widetilde{L^2_t}(\dot{B}^{\frac{d}{2}}_{2,1})}\\
& + \|b \|_{\widetilde{L^\infty_t}(\dot{B}^{\frac{d}{2}}_{2,1})}
	\left(\|\div n \|_{\widetilde{L^1_t}(\dot{B}^{\frac{d}{2}}_{2,1})} 
	+ \|\Delta b \|_{\widetilde{L^1_t}(\dot{B}^{\frac{d}{2}}_{2,1})}\right) \\
&  +\|n\|_{\widetilde{L^2_t}(\dot{B}^{\frac{d}{2}}_{2,1})}^2 
\left(1 +\|b\|_{\widetilde{L^\infty_t}(\dot{B}^{\frac{d}{2}}_{2,1})} \right)
  +\| b\|_{\widetilde{L^\infty_t}(\dot{B}^{\frac{d}{2}-1}_{2,1})} 
	 \|\nabla b \|_{\widetilde{L^1_t}(\dot{B}^{\frac{d}{2}}_{2,1})},
\end{aligned}\end{equation}
provided that $\gamma>0$. 
Thus, we have \eqref{l:CNSK.nonl.est:inq1} if $\gamma>0$. 

\medbreak
If, on the other hand, $\gamma=0$, we only have
\begin{equation}\nonumber
\Lambda b  \in 
\widetilde{L^\infty}(0,T;\dot B^{\frac{d}{2}-1}_{2,1}(\R^d))
\cap \widetilde{L^2}(0,T;\dot B^{\frac{d}{2}}_{2,1}(\R^d))
\cap \widetilde{L^1}(0,T;\dot B^{\frac{d}{2}+1}_{2,1}(\R^d)).
\end{equation}
We may write the term 
$P'(b+\rho_*) \nabla b= \nabla (P(b+\rho_*))$ and 
\begin{equation}\nonumber\begin{aligned}
\|\nabla (P(b+\rho_*))\|_{\widetilde{L^1_t}(\dot{B}^{\frac{d}{2}-1}_{2,1})} 
&\le C \| P(b+\rho_*)\|_{\widetilde{L^1_t}(\dot{B}^{\frac{d}{2}}_{2,1})} \\
&= C \| P(b+\rho_*)- P(\rho_*)\|_{\widetilde{L^1_t}(\dot{B}^{\frac{d}{2}}_{2,1})} 
\le C \| b\|_{\widetilde{L^1_t}(\dot{B}^{\frac{d}{2}}_{2,1})},
\end{aligned}\end{equation}
thanks to Lemma \ref{l:comp}.
However, we have no control on 
$\| b\|_{\widetilde{L^1_t}(\dot{B}^{\frac{d}{2}}_{2,1})},$ so the above only implies
\begin{equation}\nonumber
\|\nabla (P(b+\rho_*))\|_{\widetilde{L^1_t}(\dot{B}^{\frac{d}{2}-1}_{2,1})} 
\le C t \| b\|_{\widetilde{L^\infty_t}(\dot{B}^{\frac{d}{2}}_{2,1})}.
\end{equation}
Alternatively, we have 
\begin{equation}\label{l:CNSK.nonl.est.pr6}\begin{aligned}
\|P'(b+\rho_*) \nabla b \|_{\widetilde{L^1_t}(\dot{B}^{\frac{d}{2}-1}_{2,1})} 
&\le C\| P'(b+\rho_*) \|_{\widetilde{L^1_t}(\dot{B}^{\frac{d}{2}}_{2,1})} 
 \|\nabla b \|_{\widetilde{L^\infty_t}(\dot{B}^{\frac{d}{2}-1}_{2,1})}  \\
&\le C\| b\|_{\widetilde{L^1_t}(\dot{B}^{\frac{d}{2}}_{2,1})} 
 \|\nabla b \|_{\widetilde{L^\infty_t}(\dot{B}^{\frac{d}{2}-1}_{2,1})}\\
&\le C t \| b\|_{\widetilde{L^\infty_t}(\dot{B}^{\frac{d}{2}}_{2,1})} 
 \|\nabla b \|_{\widetilde{L^\infty_t}(\dot{B}^{\frac{d}{2}-1}_{2,1})}
\le C t  \|\nabla b \|_{\widetilde{L^\infty_t}(\dot{B}^{\frac{d}{2}-1}_{2,1})}^2.\\ 
\end{aligned}\end{equation}
Either way, we may not close the estimates globally-in-time if 
$\gamma=0$ with our given regularity in $CL_T$: Combining 
\eqref{l:CNSK.nonl.est.pr1}, \eqref{l:CNSK.nonl.est.pr2},
\eqref{l:CNSK.nonl.est.pr3}, \eqref{l:CNSK.nonl.est.pr4},
and \eqref{l:CNSK.nonl.est.pr6}, we have 
\begin{equation}\nonumber\begin{aligned}
\|N(b,n)\|_{\widetilde{L^1_t}(\dot{B}^{\frac{d}{2}-1}_{2,1})}
&\lesssim 
(\|b\|_{\widetilde{L^\infty_t}(\dot{B}^{\frac{d}{2}}_{2,1})} +1)
	\|\nabla b \|_{\widetilde{L^2_t}(\dot{B}^{\frac{d}{2}}_{2,1})} 
	\|n\|_{\widetilde{L^2_t}(\dot{B}^{\frac{d}{2}}_{2,1})}\\
& + \|b \|_{\widetilde{L^\infty_t}(\dot{B}^{\frac{d}{2}}_{2,1})}
	\left(\|\div n \|_{\widetilde{L^1_t}(\dot{B}^{\frac{d}{2}}_{2,1})} 
	+ \|\Delta b \|_{\widetilde{L^1_t}(\dot{B}^{\frac{d}{2}}_{2,1})}\right) \\
&  +\|n\|_{\widetilde{L^2_t}(\dot{B}^{\frac{d}{2}}_{2,1})}^2 
\left(1 +\|b\|_{\widetilde{L^\infty_t}(\dot{B}^{\frac{d}{2}}_{2,1})} \right)
  + t \| b\|_{\widetilde{L^\infty_t}(\dot{B}^{\frac{d}{2}}_{2,1})}^2
\end{aligned}\end{equation}
in this case. 
Thus, we have \eqref{l:CNSK.nonl.est:inq2} if $\gamma=0$. 
This completes the proof of Lemma \ref{l:CNSK.nonl.est}. 
\end{proof}
\begin{proof}[Proof of Theorem \ref{t:L2.WP}] 
The time-continuity follows from the standard density argument as in 
the corresponding proof for the linear heat equation. 
The proof of Theorem \ref{t:L2.WP} readily follows from 
Corollary \ref{c:lL2}, Lemma \ref{l:CNSK.nonl.est} 
and Lemma \ref{l:fxd.pt.bi.tri.lin}. 
As the proof is standard, we only sketch the steps. 
We let $(a_L,m_L)$ be the solution to the linear equation:
\begin{equation}\nonumber
\left\{\begin{aligned}
&\partial_t  a_L + \div  m_L = 0 ,\\
&\partial_t  m_L - \frac{1}{\rho_*} \mathcal{L}  m_L  
    -\kappa \rho_* \nabla\Delta  a_L
   + \gamma \nabla  a_L = 0, \\
&(a,  m)_{t=0} =(a_0,m_0)
\end{aligned} \right.
\end{equation}
and let $(\tilde a, \tilde m) := (a- a_L, m-m_L).$ 
Then Corollary \ref{c:lL2} with $s\equiv\frac{d}{2}-1$ and $\sigma\equiv1$ gives
\begin{equation}\nonumber
\begin{aligned}
\|(a, m)\|_{CL_T} 
&\le \|(a_L, m_L)\|_{CL_T}  + \|(\tilde a, \tilde m)\|_{CL_T} \\
& \le C 
\big( \|((\gamma+\Lambda) a_0, m_0) \|_{\dot{B}^{\frac{d}{2}-1}_{2,1}} +
\|N(b,n)\|_{\widetilde{L^1_t}(\dot{B}^{\frac{d}{2}-1}_{2,1})} \big),
\end{aligned}\end{equation}
for all $(b,n) \in CL_T$. 
Lemma \ref{l:CNSK.nonl.est} ensures that $N(b,n)$
can be regarded as a combination of bi-and-trilinear continuous 
operators in $CL_T$. 
We define a ball in $CL_T$ centered at the origin by 
\begin{equation}\label{CNSK.mtrc.sp}
B_R^{CL_T}(0) := \left\{ (a,m) \in CL_T \,;\, \|(a, m)\|_{CL_T} \le R \right\},
\end{equation}
where $R>0$. Lemma \ref{l:fxd.pt.bi.tri.lin} now ensures the existence of a 
unique solution in the ball $B_R^{CL_T}(0)$ for a sufficiently small data. 
Note that $T=\infty$ is allowed for in the case $\gamma>0$, 
thanks to the global estimate in Lemma \ref{l:CNSK.nonl.est}. 
By \eqref{l:fxd.pt.bi.tri.lin.bnd}, 
$\|(a, m)\|_{CL_T}  
\le C \|((\gamma+\Lambda) a_0, m_0) \|_{\dot{B}^{\frac{d}{2}-1}_{2,1}}$ 
is also ensured. 
\end{proof}

\subsection{Global solution when $\gamma=0$.} 
From hereon, we focus on the case $\gamma=0$. 
When $\gamma>0$, 
the term $\gamma \nabla \rho$ induces a damping effect, 
which provides the control of $ \|\nabla b \|_{\widetilde{L^1_t}(\dot{B}^{\frac{d}{2}}_{2,1})}.$ 
The lack of this time-integrability in the case $\gamma=0$ is troublesome, 
especially when we control the pressure term. 
As shown by \eqref{l:CNSK.nonl.est.pr6}, we need the control of 
$ \|\nabla b \|_{\widetilde{L^1_t}(\dot{B}^{\frac{d}{2}}_{2,1})}$, 
whereas we only have 
$\nabla b \in \widetilde{L^\infty_t}(\dot{B}^{\frac{d}{2}}_{2,1})$ in $CL_T$. 
In order to compensate this lack of global control, 
we impose stronger regularity on the low-frequency, 
which is of lower order in terms of the scaling, 
to gain the necessary global control of the density. 
\medbreak 

To handle this case, we have to change the expressions of 
the capillary and pressure terms. 
We linearize the capillary term while keeping it in the divergence form:
\begin{equation}\nonumber
\div K(\rho) 
= \kappa \rho_* \nabla \Delta a +  \div \left( K(a) \right),
\end{equation}
where as before we denoted $a:= \rho-\rho_*$. 
Similarly, we write the pressure as follows: 
By Taylor expansion, we have 
$$
P(a+\rho_*)=P(\rho_*)+P'(\rho_*)a+a\widetilde P(a),
$$
where $\widetilde P: \R_+ \to\R$ is smooth and vanishes at $0.$
Using the assumption that $P'(\rho_*) = 0$, we have 
\begin{equation}\nonumber
\nabla P(a+\rho_*) =  \nabla ( a\widetilde P(a) ).
\end{equation}
With this expression, we obtain the same linear structure with 
nonlinear terms all written in divergence form:
\begin{equation}\label{CNSKp:mom.nonl.trm.div}
\begin{aligned}
N(b,n)= \mathcal{L}\left( Q(b) n\right) 
    + \div \left( K(b) \right)
     - \frac{1}{\rho_*}  \div(n\otimes n) 
    -\div\left( Q(b)n\otimes n \right) 
    + \nabla ( b\widetilde P(b) ).
\end{aligned}\end{equation}
This expression enables us to close necessary product estimates in 
the Besov space of regularity index $s = \frac{d}{2}-3$ 
by a sole restriction $d\ge3$. 

Given initial data $(a_0,m_0)$ such that 
$(\Lambda a_0, m_0) \in \dot{B}^{\frac{d}{2}-1}_{2,1}(\R^d),$ 
we show that the map $\Phi : (b,n) \mapsto (a,m)$ 
defined in \eqref{CNSK.map} with $(a,  m)$ the solution to 
\eqref{CNSKp.scheme} with $\gamma=0,$ i.e., 
\begin{equation}\nonumber
\left\{\begin{aligned}
&\partial_t  a + \div  m = 0 ,\\
&\partial_t  m - \frac{1}{\rho_*} \mathcal{L}  m  
    -\kappa \rho_* \nabla\Delta  a = N(b,n), \\
&(a,  m)_{t=0} =(a_0,m_0),
\end{aligned} \right.
\end{equation}
where $N=N(b,n)$ is as defined in \eqref{CNSKp:mom.nonl.trm.div}, 
has a fixed point in a ball of a suitable metric space. 
To define such a space, for $d\ge3$, we let $\widetilde{CL}_T$ be given by
\begin{equation}\nonumber
\widetilde{CL}_T 
:= \widetilde{L^\infty}(0,T;\dot B^{\frac{d}{2}-3}_{2,1}(\R^d) 
			  \cap \dot B^{\frac{d}{2}-1}_{2,1}(\R^d)) 
		\cap 
    \widetilde{L^1}(0,T;\dot B^{\frac{d}{2}-1}_{2,1}(\R^d)
    			\cap \dot B^{\frac{d}{2}+1}_{2,1}(\R^d))
\end{equation}
equipped with a norm 
\begin{equation}\nonumber\begin{aligned}
\|(a,m)\|_{\widetilde{CL}_T} 
:= \|(\Lambda a, m)\|_{\widetilde{L^\infty_T}(
\dot B^{\frac{d}{2}-3}_{2,1}\cap \dot B^{\frac{d}{2}-1}_{2,1})} 
 + \|\Lambda^2 (\Lambda a, m)
   \|_{\widetilde{L^1_T}(\dot B^{\frac{d}{2}-3}_{2,1}\cap \dot B^{\frac{d}{2}-1}_{2,1})}.
\end{aligned}\end{equation}
By interpolation, we may readily see that $(a,m) \in \widetilde{CL}_T$ 
satisfies \eqref{CLT.intp} and, additionally, 
\begin{equation}\label{tld.CLT.intp}
(\Lambda a, m) \in 
\widetilde{L^q}(0,T;\dot B^{\frac{d}{2}-3+\frac{2}{q}}_{2,1}(\R^d))
\quad\text{for}\quad q \in [1,\infty]. 
\end{equation}
Then we have the following global nonlinear estimates for the case 
$\gamma=0$. 
\begin{lem}\label{l:CNSK.nonl.est.tld}
Let $d\ge 3$ and $T>0$. 
Let $N$ be as defined in \eqref{CNSKp:mom.nonl.trm} and 
let $P$ be a smooth function such that $\gamma=0$. 
Then there exists a positive constant $C$ depending only on 
$\rho_*,$ $P$, $\lambda$, $\mu$, $\kappa$ and $d$  such that 
\begin{equation}\label{l:CNSK.nonl.est.tld:inq1}
\|N(b,n)\|_{\widetilde{L^1_t}(\dot B^{\frac{d}{2}-3}_{2,1}\cap\dot{B}^{\frac{d}{2}-1}_{2,1})} 
\le C (\|(b,n)\|_{\widetilde{CL}_t}^2+\|(b,n)\|_{\widetilde{CL}_t}^3)
\end{equation}
for all $(b,n) \in \widetilde{CL}_T$ and $0 \le t \le T$. 
%
\end{lem}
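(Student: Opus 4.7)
The plan is to bound each of the five nonlinear terms in the divergence form \eqref{CNSKp:mom.nonl.trm.div} separately in both components of the intersection norm $\widetilde{L^1_t}(\dot B^{\frac{d}{2}-3}_{2,1}\cap\dot B^{\frac{d}{2}-1}_{2,1})$. The estimate at the higher index $\dot B^{\frac{d}{2}-1}_{2,1}$ essentially repeats the computations of Lemma \ref{l:CNSK.nonl.est}; the only new input is that, since $P'(\rho_*)=0$, the Taylor expansion gives $\nabla P(b+\rho_*)=\nabla(b\widetilde P(b))$ with $\widetilde P(0)=0$, so this term is genuinely quadratic in $b$ and closes via the product rule plus Lemma \ref{l:comp} applied to $\widetilde P$, without any need for a global $L^1_t$ control of $\nabla b$.

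The new work lies in the low-index estimate at $\dot B^{\frac{d}{2}-3}_{2,1}$. Here the divergence structure of every term in \eqref{CNSKp:mom.nonl.trm.div} is essential: each carries at least one derivative outside a product (two for the viscous term $\mathcal{L}(Q(b)n)$), so it is enough to bound
\[
\|Q(b)n\|_{\widetilde{L^1_t}(\dot B^{\frac{d}{2}-1}_{2,1})},\quad
\|K(b)\|_{\widetilde{L^1_t}(\dot B^{\frac{d}{2}-2}_{2,1})},\quad
\|n\otimes n\|_{\widetilde{L^1_t}(\dot B^{\frac{d}{2}-2}_{2,1})},
\]
together with the analogous norms of $Q(b)n\otimes n$ and $b\widetilde P(b)$. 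Each product will be estimated by the Bony-type rule
$\|fg\|_{\dot B^{s}_{2,1}}\lesssim \|f\|_{\dot B^{s_1}_{2,1}}\|g\|_{\dot B^{s_2}_{2,1}}$ with $s_1+s_2=s+\tfrac{d}{2}$, $s\le\min(s_1,s_2)$ and $s_1+s_2>0$. The assumption $d\ge 3$ enters precisely to ensure $\tfrac{d}{2}-2\ge -\tfrac{1}{2}$, which keeps $s=\tfrac{d}{2}-2$ inside the admissible range (the sum $s_1+s_2=d-2$ is then positive). Factors will be distributed using the interpolated time-integrability \eqref{CLT.intp} and \eqref{tld.CLT.intp}: for instance, the capillary piece $b\,\Delta b$ is handled by $\|b\,\Delta b\|_{\dot B^{\frac{d}{2}-2}_{2,1}}\lesssim \|b\|_{\dot B^{\frac{d}{2}-2}_{2,1}}\|\Delta b\|_{\dot B^{\frac{d}{2}}_{2,1}}$, combining $b\in\widetilde{L^\infty_t}(\dot B^{\frac{d}{2}-2}_{2,1})$ with $\Delta b\in\widetilde{L^1_t}(\dot B^{\frac{d}{2}}_{2,1})$; $n\otimes n$ is treated by H\"older in time from the $\widetilde{L^2_t}$-interpolated bounds; Lemma \ref{l:comp} absorbs $Q(b)$, $\widetilde P(b)$ and the nonlinear piece of $K(b)$; and the trilinear terms are handled in the same spirit to produce the cubic contribution $\|(b,n)\|_{\widetilde{CL}_t}^3$.

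The main obstacle is expected to be the pressure contribution $\nabla(b\widetilde P(b))$ at the low index: it is the only purely quadratic-in-$b$ term with no $n$-factor onto which derivatives can be absorbed, and its product estimate in $\dot B^{\frac{d}{2}-2}_{2,1}$ sits exactly at the borderline $\tfrac{d}{2}-2\ge -\tfrac{1}{2}$ of the Besov product rule. This is the precise place where restricting to $d\ge 3$ becomes indispensable, consistent with the remark following \eqref{CNSKp:mom.nonl.trm.div}; once this term is controlled via a split such as $\|b\widetilde P(b)\|_{\dot B^{\frac{d}{2}-2}_{2,1}}\lesssim \|b\|_{\dot B^{\frac{d}{2}-2}_{2,1}}\|\widetilde P(b)\|_{\dot B^{\frac{d}{2}}_{2,1}}$ followed by Lemma \ref{l:comp}, assembling the five bounds and redistributing the $L^1_t$ integration among factors (as $L^\infty_t\cdot L^1_t$ or $L^2_t\cdot L^2_t$) yields \eqref{l:CNSK.nonl.est.tld:inq1}.
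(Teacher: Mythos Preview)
Your proposal is correct and matches the paper's approach: term-by-term bounds in both indices, exploiting the divergence structure \eqref{CNSKp:mom.nonl.trm.div} at the low index $\dot B^{\frac{d}{2}-3}_{2,1}$ together with Lemmas \ref{lem:prd.p=2} and \ref{l:comp}, with the $d\ge3$ restriction arising from the product-rule constraint $s_1+s_2=d-2>0$ at target index $\frac{d}{2}-2$. The only cosmetic differences are that the paper expands $\mathcal{L}(Q(b)n)$ by Leibniz rather than absorbing both derivatives at once, and that for the high-index pressure the paper works with $P'(b+\rho_*)\nabla b$ directly (remarking that the Taylor form is actually needed only at the low index); note also that your phrase ``without any need for a global $L^1_t$ control of $\nabla b$'' is slightly imprecise, since closing that term still consumes $\|b\|_{\widetilde{L^1_t}(\dot B^{d/2}_{2,1})}$, which is precisely the extra low-frequency control supplied by $\widetilde{CL}_T$.
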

\begin{rem}
The restriction $d\ge3$ for \eqref{l:CNSK.nonl.est.tld:inq1} is 
used to close product estimates in $\dot B^{\frac{d}{2}-3}_{2,1}(\R^d)$. 
\end{rem}
\begin{proof}
Let $d\ge 3$. 
Note that by \eqref{CLT.intp.pr}, we still have
\begin{equation}\nonumber
(\Lambda b, n) \in 
 \widetilde{L^\infty}(0,T;\dot B^{\frac{d}{2}-1}_{2,1}(\R^d))
\cap \widetilde{L^2}(0,T;\dot B^{\frac{d}{2}}_{2,1}(\R^d))
\cap \widetilde{L^1}(0,T;\dot B^{\frac{d}{2}+1}_{2,1}(\R^d)).
\end{equation}
From this, it is clear that we have 
\eqref{l:CNSK.nonl.est.pr1}, \eqref{l:CNSK.nonl.est.pr2},
\eqref{l:CNSK.nonl.est.pr3} and \eqref{l:CNSK.nonl.est.pr4} as before. 
Furthermore, $(b,n) \in \widetilde{CL}_T$ implies \eqref{tld.CLT.intp}:
\begin{equation}\label{CLT.intp.tld.pr}
(\Lambda b, n) \in 
 \widetilde{L^\infty}(0,T;\dot B^{\frac{d}{2}-3}_{2,1}(\R^d))
\cap \widetilde{L^2}(0,T;\dot B^{\frac{d}{2}-2}_{2,1}(\R^d))
\cap \widetilde{L^1}(0,T;\dot B^{\frac{d}{2}-1}_{2,1}(\R^d)).
\end{equation}
To handle the last term 
$\|\nabla (P(b+\rho_*))\|_{\widetilde{L^1_t}(\dot{B}^{\frac{d}{2}-1}_{2,1})}$, 
we use the above additional low-frequency assumption on $b$. We have 
\begin{equation}\nonumber
\begin{aligned}
\|P'(b+\rho_*) \nabla b \|_{\widetilde{L^1_t}(\dot{B}^{\frac{d}{2}-1}_{2,1})} 
&\le C\| P'(b+\rho_*) \|_{\widetilde{L^1_t}(\dot{B}^{\frac{d}{2}}_{2,1})} 
 \|\nabla b \|_{\widetilde{L^\infty_t}(\dot{B}^{\frac{d}{2}-1}_{2,1})}  \\
&\le C\| b\|_{\widetilde{L^\infty_t}(\dot{B}^{\frac{d}{2}}_{2,1})}
\| b\|_{\widetilde{L^1_t}(\dot{B}^{\frac{d}{2}}_{2,1})},
\end{aligned}\end{equation}
thanks to \eqref{CLT.intp.pr} and \eqref{CLT.intp.tld.pr}. 
Notice that, here, we do not need to use the expression 
\eqref{CNSKp:mom.nonl.trm.div}. 
Combining all information, we have 
\begin{equation}\nonumber\begin{aligned}
\|N(b,n)\|_{\widetilde{L^1_t}(\dot{B}^{\frac{d}{2}-1}_{2,1})}
&\lesssim 
(\|b\|_{\widetilde{L^\infty_t}(\dot{B}^{\frac{d}{2}}_{2,1})} +1)
	\|\nabla b \|_{\widetilde{L^2_t}(\dot{B}^{\frac{d}{2}}_{2,1})} 
	\|n\|_{\widetilde{L^2_t}(\dot{B}^{\frac{d}{2}}_{2,1})}\\
& + \|b \|_{\widetilde{L^\infty_t}(\dot{B}^{\frac{d}{2}}_{2,1})}
	\left(\|\div n \|_{\widetilde{L^1_t}(\dot{B}^{\frac{d}{2}}_{2,1})} 
	+ \|b\|_{\widetilde{L^1_t}(\dot{B}^{\frac{d}{2}}_{2,1})}
	+ \|\Delta b \|_{\widetilde{L^1_t}(\dot{B}^{\frac{d}{2}}_{2,1})}\right) \\
&  +\|n\|_{\widetilde{L^2_t}(\dot{B}^{\frac{d}{2}}_{2,1})}^2 
\left(1 +\|b\|_{\widetilde{L^\infty_t}(\dot{B}^{\frac{d}{2}}_{2,1})} \right).
\end{aligned}\end{equation}

In order to complete the proof of \eqref{l:CNSK.nonl.est.tld:inq1}, 
we still have to bound 
$\|N(b,n)\|_{\widetilde{L^1_t}(\dot B^{\frac{d}{2}-3}_{2,1} )}$
under the assumption that $d\ge3$. 
Thanks to \eqref{CLT.intp.pr}, \eqref{CLT.intp.tld.pr}, Lemma \ref{l:comp}, Lemma \ref{lem:prd.p=2}, we have 
\begin{equation}\label{l:CNSK.nonl.est.tld.pr1}\begin{aligned}
&\|\mathcal{L}\left( Q(b) n\right) \|_{\widetilde{L^1_t}(\dot{B}^{\frac{d}{2}-3}_{2,1})} 
\le C \|\nabla( Q(b)) n + Q(b) \div n \id\|_{\widetilde{L^1_t}(\dot{B}^{\frac{d}{2}-2}_{2,1})} \\
&\le C \|(Q'(b) -Q'(0) )(\nabla b) n \|_{\widetilde{L^1_t}(\dot{B}^{\frac{d}{2}-2}_{2,1})}
	+ \| Q'(0) (\nabla b) n\|_{\widetilde{L^1_t}(\dot{B}^{\frac{d}{2}-2}_{2,1})} 
	+ \|Q(b) \div n \|_{\widetilde{L^1_t}(\dot{B}^{\frac{d}{2}-2}_{2,1})} \\
&\le C \bigg( \|b\|_{\widetilde{L^\infty_t}(\dot{B}^{\frac{d}{2}}_{2,1})} 
	\|\nabla b \|_{\widetilde{L^2_t}(\dot{B}^{\frac{d}{2}}_{2,1})} 
	\|n\|_{\widetilde{L^2_t}(\dot{B}^{\frac{d}{2}-2}_{2,1})}
+  \|\nabla b\|_{\widetilde{L^2_t}(\dot{B}^{\frac{d}{2}}_{2,1})} 
	\|n\|_{\widetilde{L^2_t}(\dot{B}^{\frac{d}{2}-2}_{2,1})}\\
&\qquad\qquad  + \|b \|_{\widetilde{L^\infty_t}(\dot{B}^{\frac{d}{2}}_{2,1})}
		\|n \|_{\widetilde{L^1_t}(\dot{B}^{\frac{d}{2}-1}_{2,1})} \bigg);
\end{aligned}\end{equation}
\begin{equation}\label{l:CNSK.nonl.est.tld.pr2}\begin{aligned}
\|\div \left( K(b) \right) \|_{\widetilde{L^1_t}(\dot{B}^{\frac{d}{2}-3}_{2,1})} 
&\le C \| (\Delta b^2 - |\nabla b|^2) \id - \kappa \nabla a \otimes \nabla a \|_{\widetilde{L^1_t}(\dot{B}^{\frac{d}{2}-2}_{2,1})} \\
&\lesssim 
\| b^2\|_{\widetilde{L^1_t}(\dot{B}^{\frac{d}{2}}_{2,1})} 
+ \| |\nabla b|^2 \id - \kappa \nabla b \otimes \nabla b
         \|_{\widetilde{L^1_t}(\dot{B}^{\frac{d}{2}-2}_{2,1})} \\
&\lesssim 
\| b\|_{\widetilde{L^2_t}(\dot{B}^{\frac{d}{2}}_{2,1})}^2
+ \| \nabla b \|_{\widetilde{L^2_t}(\dot{B}^{\frac{d}{2}-2}_{2,1})}
\| \nabla b \|_{\widetilde{L^2_t}(\dot{B}^{\frac{d}{2}}_{2,1})} \\
&\lesssim 
\| \Lambda b\|_{\widetilde{L^2_t}(\dot{B}^{\frac{d}{2}-1}_{2,1})}^2
+ \| \Lambda b \|_{\widetilde{L^2_t}(\dot{B}^{\frac{d}{2}-2}_{2,1})}
\| \Lambda b \|_{\widetilde{L^2_t}(\dot{B}^{\frac{d}{2}}_{2,1})};
\end{aligned}\end{equation}
\begin{equation}\label{l:CNSK.nonl.est.tld.pr3}
\left\|\frac{1}{\rho_*}  \div(n\otimes n) \right\|_{\widetilde{L^1_t}(\dot{B}^{\frac{d}{2}-3}_{2,1})} 
\le C \| n\otimes n \|_{\widetilde{L^1_t}(\dot{B}^{\frac{d}{2}-2}_{2,1})} 
\le C \|n\|_{\widetilde{L^2_t}(\dot{B}^{\frac{d}{2}}_{2,1})}
         \|n\|_{\widetilde{L^2_t}(\dot{B}^{\frac{d}{2}-2}_{2,1})};
\end{equation}
\begin{equation}\label{l:CNSK.nonl.est.tld.pr4}\begin{aligned}
\|\div\left( Q(b)n\otimes n \right)  \|_{\widetilde{L^1_t}(\dot{B}^{\frac{d}{2}-3}_{2,1})} 
&\le C\|Q(b)n\otimes n \|_{\widetilde{L^1_t}(\dot{B}^{\frac{d}{2}-2}_{2,1})}  \\
&\le C\|Q(b)\|_{\widetilde{L^\infty_t}(\dot{B}^{\frac{d}{2}}_{2,1})}
	\|n\|_{\widetilde{L^2_t}(\dot{B}^{\frac{d}{2}-2}_{2,1})}
	\|n \|_{\widetilde{L^2_t}(\dot{B}^{\frac{d}{2}}_{2,1})} \\
&\le C\|b\|_{\widetilde{L^\infty_t}(\dot{B}^{\frac{d}{2}}_{2,1})}
	\|n\|_{\widetilde{L^2_t}(\dot{B}^{\frac{d}{2}-2}_{2,1})}
	\|n \|_{\widetilde{L^2_t}(\dot{B}^{\frac{d}{2}}_{2,1})};
\end{aligned}\end{equation}
\begin{equation}\label{l:CNSK.nonl.est.tld.pr5}\begin{aligned}
\|  \nabla ( b\widetilde P(b) ) \|_{\widetilde{L^1_t}(\dot{B}^{\frac{d}{2}-3}_{2,1})} 
&\le \|  b\widetilde P(b) \|_{\widetilde{L^1_t}(\dot{B}^{\frac{d}{2}-2}_{2,1})} 
\le C\| b \|_{\widetilde{L^1_t}(\dot{B}^{\frac{d}{2}-2}_{2,1})} 
 \|\widetilde P(b) \|_{\widetilde{L^\infty_t}(\dot{B}^{\frac{d}{2}}_{2,1})}  \\
&\le C \| \Lambda b \|_{\widetilde{L^\infty_t}(\dot{B}^{\frac{d}{2}-3}_{2,1})}
\| \Lambda b\|_{\widetilde{L^1_t}(\dot{B}^{\frac{d}{2}-1}_{2,1})}.
\end{aligned}\end{equation} 
Note here that if we use the expression 
\eqref{CNSKp:mom.nonl.trm}, then instead of 
\eqref{l:CNSK.nonl.est.tld.pr2} and \eqref{l:CNSK.nonl.est.tld.pr5}, we 
end up with using Lemma \ref{lem:prd.p=2} with $s= \frac{d}{2}-3,$ which 
requires $d\ge4.$ 

Summing up \eqref{l:CNSK.nonl.est.tld.pr1}, 
\eqref{l:CNSK.nonl.est.tld.pr2}, 
\eqref{l:CNSK.nonl.est.tld.pr3}, 
\eqref{l:CNSK.nonl.est.tld.pr4} and 
\eqref{l:CNSK.nonl.est.tld.pr5}, 
we have 
\begin{equation}\nonumber\begin{aligned}
\|N(b,n)\|_{\widetilde{L^1_t}(\dot{B}^{\frac{d}{2}-3}_{2,1})}
&\lesssim 
 \|b\|_{\widetilde{L^\infty_t}(\dot{B}^{\frac{d}{2}}_{2,1})} 
 ( \|\nabla b \|_{\widetilde{L^2_t}(\dot{B}^{\frac{d}{2}}_{2,1})} 
	\|n\|_{\widetilde{L^2_t}(\dot{B}^{\frac{d}{2}-2}_{2,1})}
          +\|n\|_{\widetilde{L^2_t}(\dot{B}^{\frac{d}{2}-2}_{2,1})}
	\|n \|_{\widetilde{L^2_t}(\dot{B}^{\frac{d}{2}}_{2,1})} ) \\
& + \|b\|_{\widetilde{L^\infty_t}(\dot{B}^{\frac{d}{2}}_{2,1})} 
	 (\|n \|_{\widetilde{L^1_t}(\dot{B}^{\frac{d}{2}-1}_{2,1})} 
              +\|b \|_{\widetilde{L^1_t}(\dot{B}^{\frac{d}{2}}_{2,1})} ) \\
& + \|n\|_{\widetilde{L^2_t}(\dot{B}^{\frac{d}{2}-2}_{2,1})}
        (\|\nabla b\|_{\widetilde{L^2_t}(\dot{B}^{\frac{d}{2}}_{2,1})} 
           +\|n\|_{\widetilde{L^2_t}(\dot{B}^{\frac{d}{2}}_{2,1})})
  + \|\nabla b \|_{\widetilde{L^\infty_t}(\dot{B}^{\frac{d}{2}-3}_{2,1})}
\| b\|_{\widetilde{L^1_t}(\dot{B}^{\frac{d}{2}}_{2,1})},
\end{aligned}\end{equation}
which completes the proof of \eqref{l:CNSK.nonl.est.tld:inq1}. 
\end{proof}

\begin{proof}[Proof of Theorem \ref{t:L2.WP.szero}]
Similarly to Theorem \ref{t:L2.WP}, 
the proof of Theorem \ref{t:L2.WP.szero} readily follows from 
Corollary \ref{c:lL2}, Lemma \ref{l:CNSK.nonl.est.tld} 
and Lemma \ref{l:fxd.pt.bi.tri.lin}. 
The details are left to the readers. 
\end{proof}

\section{Proof of decay estimates}
In this section, we prove the decay estimates for the 
global small solutions that have been obtained in the previous section. 
\begin{proof}[Proof of Theorem \ref{t:CNSK.dcy}]
The proof below is inspired by \cite{DanXu2017}. 
For notational simplicity, in the following, we denote  
\begin{equation}\nonumber\begin{aligned}
X(t) := \|(a,m)\|_{CL_T} 
=\|((\gamma+\Lambda)  a, m)
		\|_{\widetilde{L^\infty_t}(\dot B^{\frac{d}{2}-1}_{2,1})} 
+ \|\Lambda^2 ((\gamma+\Lambda)  a, m)
	\|_{\widetilde{L^1_t}(\dot B^{\frac{d}{2}-1}_{2,1})}.
\end{aligned}\end{equation}
and
$X_0 := \|(\gamma+\Lambda) a_0, m_0) \|_{\dot{B}^{\frac{d}{2}-1}_{2,1}}.$

Firstly, note that for a solution to \eqref{CNSK} constructed by 
Theorem \ref{t:L2.WP} with $D(t)<\infty,$ we also have
\begin{equation}\label{bnd:Dt.h.inh}
\begin{aligned}
\| \langle\tau\rangle^{\alpha}((\gamma+\Lambda)a, m)
	\|_{\widetilde{L^\infty_t}(\dot B^{\frac{d}{2}-1}_{p,1})}^h 
&\lesssim \|((\gamma+\Lambda)a, m)
	\|_{\widetilde{L^\infty_t}(\dot B^{\frac{d}{2}-1}_{p,1})}^h 
+ \|\tau^{\alpha}((\gamma+\Lambda)a, m)
	\|_{\widetilde{L^\infty_t}(\dot B^{\frac{d}{2}+1}_{p,1})}^h \\
&\lesssim X(t) + D(t) < \infty. 
\end{aligned}\end{equation}
\smallbreak
\noindent\underline{\bf Step 1: Bounds for the low frequencies.} 
We denote the semigroup associated to the left hand-side 
of System \eqref{lCNSKp} by $e^{tK}$ and let 
$$
U = \left[ \begin{array}{c} 
          (\gamma+\Lambda)a \\  m \\
  \end{array} \right], 
\quad
U_0 = \left[ \begin{array}{c} 
          (\gamma+\Lambda) a_0 \\  m_0 \\
  \end{array} \right]
  \quad\text{and}\quad  
F=F (a,m)= \left[ \begin{array}{c} 
         0 \\  N(a,m) \\
  \end{array} \right],
$$
where $N=N(a,m)$ is as defined in \eqref{CNSKp:mom.nonl.trm.div}. 
By Duhamel's formula, we have the expression 
\begin{equation}\nonumber
U (t)= e^{t K} U_0 
+ \int_0^{t} e^{(t-\tau) K} F(a,m)(\tau)d\tau.
\end{equation}

Since $((\gamma+\Lambda) a_0, m_0) \in \dot{B}^{-\frac{d}{2}}_{2,\infty}(\R^d)$, 
the action of the semigroup on $\dot{B}^{s}_{2,1}(\R^d)$ 
given by Lemma \ref{l:lFH.decay} implies 
\begin{equation}\nonumber
\|e^{t K} U_{0} \|_{\dot B^{s}_{2,1}}^l
\le C t^{-\frac{1}{2}(s+\frac{d}{2})} \|((\gamma+\Lambda)a_0, m_0)\|_{\dot B^{-\frac{d}{2}}_{2,\infty}}.
\end{equation}
In addition, we have 
\begin{equation}\nonumber
\|e^{t K} U_{0} \|_{\dot B^{s}_{2,1}}^l
\le \sum_{j\le j_0} 2^{js} \|\dot\Delta_j e^{t K} U_0\|_{L^2}
\le \sum_{j\le j_0} 2^{j (s+\frac{d}{2}) } 2^{-j\frac{d}{2}} 
\|\dot\Delta_j U_0\|_{L^2}
\le C \|U_0 \|_{\dot B^{-\frac{d}{2}}_{2,\infty}}^l
\end{equation}
provided that $s+\frac{d}{2}>0,$ where $C$ 
depends only on $s,$ $d$ and $j_0.$ 
Thus, we have 
\begin{equation}\label{l:lin.est.low.id}
\|e^{t K} U_{0} \|_{\dot B^{s}_{2,1}}^l
\le C \langle t\rangle^{-\frac{1}{2}(s+\frac{d}{2})} 
\|U_0\|_{\dot B^{-\frac{d}{2}}_{2,\infty}}^l.
\end{equation}
By \eqref{l:lin.est.low.id} and the integral representation, we have 
\begin{equation}\label{t:CNSK.dcy.pr.low.est}\begin{aligned}
\|U(t)\|_{\dot B^{s}_{2,1}}^l &= 
\|((\gamma+\Lambda)a, m)(t)\|_{\dot B^{s}_{2,1}}^l
\le \|e^{t K} U_0 \|_{\dot B^{s}_{2,1}}^l
+  \int_0^{t} \| e^{(t-\tau)K} N (a,m)(\tau)\|_{\dot B^{s}_{2,1}}^l d\tau \\
&\le C
\left( \langle t\rangle^{-\frac{1}{2}(s+\frac{d}{2})} 
	\|U_0\|_{\dot B^{-\frac{d}{2}}_{2,\infty}}^l
+  \int_0^{t} \langle t-\tau\rangle^{-\frac{1}{2}(s+\frac{d}{2})} 
	\|N (a,m)(\tau)\|_{\dot B^{-\frac{d}{2}}_{2,\infty}}^l d\tau \right),
\end{aligned}\end{equation}
where the constant $C$ depends on $j_0$ 
(but stays bounded as long as $-\infty<j_0$).  
It now comes down to bounding the nonlinear terms with our prescribed regularity by $X(t)$ and $D(t)$ defined in \eqref{d:CNSK.Dt}. 
We suppose now that $\alpha$ in $D(t)$ is some real number 
to be decided later. 

In what follows, we are going to be frequently using the fact that 
\begin{equation}\label{t:CNSK.dcy.pr.fml1}
\sup_{0 \le \tau \le t} \langle \tau\rangle^{\frac{d}{4}} 
\|((\gamma+\Lambda) a,m)(\tau)\|_{\dot B^0_{2,1}} \lesssim X(t) + D(t). 
\end{equation}
Indeed, we have, thanks to \eqref{bnd:Dt.h.inh},
\begin{equation}\nonumber\begin{aligned}
\|u(\tau)\|_{\dot B^0_{2,1}} 
&\le \|u_l(\tau)\|_{\dot B^0_{2,1}} + \|u_h(\tau)\|_{\dot B^0_{2,1}} 
   \le  \|u_l(\tau)\|_{\dot B^0_{2,1}} + \|u_h(\tau)\|_{\dot B^{\frac{d}{2}-1}_{2,1}} \\
&\lesssim \langle \tau\rangle^{-\frac{d}{4}} 
	\langle \tau\rangle^{\frac{d}{4}} \|u_l(\tau)\|_{\dot B^0_{2,1}} 
+ \langle \tau\rangle^{-\alpha} \langle \tau\rangle^{\alpha} 
	 \|u_h(\tau)\|_{\dot B^{\frac{d}{2}-1}_{2,1}} \\
&\lesssim \langle \tau\rangle^{-\frac{d}{4}}D(t) 
+ \langle \tau\rangle^{-\alpha} (X(t) + D(t)),
\end{aligned}\end{equation}
for $u\in\{(\gamma+\Lambda) a,m\}$ if $\frac{d}{2}-1 \ge0,$ i.e., $d\ge2.$ 
Inequality \eqref{t:CNSK.dcy.pr.fml1} follows if 
\begin{equation}\label{t:CNSK.dcy.pr.ac1}
\alpha\ge\frac{d}{4}.
\end{equation}

\noindent\underline{\sl Estimate of $\mathcal{L} (Q(a) m)$}: 
We have 
\begin{equation}\nonumber\begin{aligned}
\int_0^{t} \langle t-\tau\rangle^{-\frac{1}{2}(s+\frac{d}{2})} 
	\|\mathcal{L} (Q(a) m)(\tau)\|_{\dot B^{-\frac{d}{2}}_{2,\infty}}^l d\tau
&\lesssim \int_0^{t} \langle t-\tau\rangle^{-\frac{1}{2}(s+\frac{d}{2})} 
	\|Q(a(\tau)) m(\tau)\|_{\dot B^{2-\frac{d}{2}}_{2,\infty}} d\tau \\
&\lesssim \int_0^{t} \langle t-\tau\rangle^{-\frac{1}{2}(s+\frac{d}{2})} 
	 \|a(\tau)\|_{\dot B^0_{2,1}} \|m(\tau)\|_{\dot B^2_{2,1}} d\tau
\end{aligned}\end{equation}
thanks to Lemma \ref{l:prd.g.lmt.p2}. 
Splitting the frequencies of $m,$ we have, for the 
integral pertaining $\|m_l(\tau)\|_{\dot B^2_{2,1}},$
\begin{equation}\nonumber\begin{aligned}
\int_0^{t} \langle t-\tau\rangle^{-\frac{1}{2}(s+\frac{d}{2})} 
	 \|a(\tau)\|_{\dot B^0_{2,1}} &\|m_l(\tau)\|_{\dot B^2_{2,1}} d\tau
\lesssim \int_0^{t} \langle t-\tau\rangle^{-\frac{1}{2}(s+\frac{d}{2})} 
	\langle \tau\rangle^{-(1+\frac{d}{2})}  d\tau \\
& \ \times \sup_{0\le\tau \le t} \left(\langle \tau\rangle^{\frac{d}{4}} 
	\|a(\tau)\|_{\dot B^0_{2,1}} \right) 
\sup_{0\le\tau \le t} \left(\langle \tau\rangle^{\frac{1}{2}(2+\frac{d}{2})} 
	\|m(\tau)\|_{\dot B^2_{2,1}}^l \right) \\
&\lesssim \langle t\rangle^{-\frac{1}{2}(s+\frac{d}{2})}  (X(t) + D(t)) D(t), 
\end{aligned}\end{equation}
thanks to \eqref{t:CNSK.dcy.pr.fml1}, Lemma \ref{l:t-est} and 
$\frac{1}{2}(s+\frac{d}{2})<1+\frac{d}{2},$ i.e., 
$s <2 + \frac{d}{2},$ which is automatically 
satisfied since $s \le \frac{d}{2}+1.$
 
As for the high frequencies, we first let $0\le t \le 2$. 
Then we have $\langle t-\tau\rangle \simeq 
\langle \tau\rangle \simeq\langle t\rangle \simeq 1$ 
for all $0\le \tau \le t\le2,$ which gives
\begin{equation}\nonumber
\begin{aligned}
\int_0^{t} \langle t-\tau\rangle^{-\frac{1}{2}(s+\frac{d}{2})} 
	 \|a(\tau)\|_{\dot B^0_{2,1}} \|m_h(\tau)\|_{\dot B^2_{2,1}} d\tau
&\lesssim \int_0^{t} \langle \tau\rangle^{-\frac{d}{4}} \|m_h(\tau)\|_{\dot B^2_{2,1}} d\tau \times D(t) \\
&\lesssim \int_0^{t} \|m(\tau)\|_{\dot B^{\frac{d}{2}+1}_{2,1}} d\tau \times D(t) \\
&\lesssim \langle t\rangle^{-\frac{1}{2}(s+\frac{d}{2})}  X(t) D(t).
\end{aligned}\end{equation}
We next treat the case $t\ge2.$ We split the integrals into $I_1$ and $I_2$ 
as follows:
\begin{equation}\nonumber\begin{aligned}
\int_0^{t} \langle t-\tau&\rangle^{-\frac{1}{2}(s+\frac{d}{2})} 
	 \|a(\tau)\|_{\dot B^0_{2,1}} \|m_h(\tau)\|_{\dot B^2_{2,1}} d\tau \\
&= \left(\int_0^{1}+ \int_1^{t} \right) \langle t-\tau\rangle^{-\frac{1}{2}(s+\frac{d}{2})} 
	 \|a(\tau)\|_{\dot B^0_{2,1}} \|m_h(\tau)\|_{\dot B^2_{2,1}} d\tau =:I_1 + I_2.
\end{aligned}\end{equation}
Note that if $0\le\tau \le1$ and $t\ge2,$ we have 
and $\langle t\rangle \le 2 t \le 2 \langle t-\tau\rangle.$ 
As such, we have 
\begin{equation}\label{t:CNSK.dcy.pr.low2}\begin{aligned}
I_1 &\lesssim 
\int_0^{1} \langle t-\tau\rangle^{-\frac{1}{2}(s+\frac{d}{2})} 
	\langle \tau\rangle^{-\frac{d}{4}} 
	 \|m(\tau)\|_{\dot B^{\frac{d}{2}+1}_{2,1}}^h d\tau \times D(t)\\
&\lesssim  t^{-\frac{1}{2}(s+\frac{d}{2})} 
\int_0^{1}  \|m(\tau)\|_{\dot B^{\frac{d}{2}+1}_{2,1}} d\tau \times D(t)\\
&\lesssim \langle t\rangle^{-\frac{1}{2}(s+\frac{d}{2})} X(t) D(t).
\end{aligned}\end{equation} 
On the other hand, since $\langle \tau\rangle^{\alpha} \le 2^{\alpha} \tau^\alpha$ if $1\le\tau\le t,$ we have 
\begin{equation}\label{t:CNSK.dcy.pr.I2}\begin{aligned}
I_2 \le \int_1^{t} \langle t-\tau&\rangle^{-\frac{1}{2}(s+\frac{d}{2})} 
	 \|a(\tau)\|_{\dot B^0_{2,1}} \|m(\tau)\|_{\dot B^2_{2,1}}^h d\tau \\
&\lesssim
\int_1^{t} \langle t-\tau \rangle^{-\frac{1}{2}(s+\frac{d}{2})} 
	 \langle \tau\rangle^{-\frac{d}{4}-\alpha} 
\left(\langle \tau\rangle^{\alpha} \|m(\tau)\|_{\dot B^{\frac{d}{2}+1}_{2,1}}^h \right)d\tau \times D(t)\\
&\lesssim
\int_1^{t} \langle t-\tau \rangle^{-\frac{1}{2}(s+\frac{d}{2})} 
	 \langle \tau\rangle^{-\frac{d}{4}-\alpha} d\tau \times D(t)^2
\lesssim \langle t \rangle^{-\frac{1}{2}(s+\frac{d}{2})}  D(t)^2
\end{aligned}\end{equation}
by Lemma \ref{l:t-est} and $\frac{1}{2}(s+\frac{d}{2}) \le \frac{d}{4}+\alpha,$
i.e., 
\begin{equation}\label{t:CNSK.dcy.pr.ac2}
s\le 2\alpha.
\end{equation}
Thus, we obtain 
\begin{equation}\label{t:CNSK.dcy.pr.low.est1}
\int_0^{t} \langle t-\tau\rangle^{-\frac{1}{2}(s+\frac{d}{2})} 
	\|\mathcal{L} (Q(a) m)(\tau)\|_{\dot B^{-\frac{d}{2}}_{2,\infty}}^l d\tau
\lesssim  \langle t\rangle^{-\frac{1}{2}(s+\frac{d}{2})}  (X(t) + D(t)) D(t), 
\end{equation}
provided that $\alpha$ satisfies 
\eqref{t:CNSK.dcy.pr.ac1} and \eqref{t:CNSK.dcy.pr.ac2}.  

\smallbreak
\noindent\underline{\sl Estimate of $\div\left( K(a) \right)$}: 
Since $\div \left( K(a) \right) = a \nabla\Delta a,$ we have 
\begin{equation}\nonumber\begin{aligned}
\int_0^{t} \langle t-\tau\rangle^{-\frac{1}{2}(s+\frac{d}{2})} 
	\|\div K(a(\tau)) &\|_{\dot B^{-\frac{d}{2}}_{2,\infty}}^l d\tau
\lesssim \int_0^{t} \langle t-\tau\rangle^{-\frac{1}{2}(s+\frac{d}{2})} 
	\|a \nabla\Delta a \|_{\dot B^{-\frac{d}{2}}_{2,\infty}} d\tau \\
&\lesssim \int_0^{t} \langle t-\tau\rangle^{-\frac{1}{2}(s+\frac{d}{2})} 
	 \|a(\tau)\|_{\dot B^{1}_{2,1}} \| \nabla\Delta a(\tau)\|_{\dot B^{-1}_{2,1}} d\tau
\end{aligned}\end{equation}
thanks to the limiting case of the product rule (Lemma \ref{l:prd.g.lmt.p2}). 
We split the frequencies of the higher order term as before:
$\|a(\tau)\|_{\dot B^2_{2,1}}\le \|a_l(\tau)\|_{\dot B^2_{2,1}}+\|a_h(\tau)\|_{\dot B^2_{2,1}}.$
Note that we have 
$\frac{1}{2}(s+\frac{d}{2})\le \frac{d}{4}+\frac{1}{2}(2+\frac{d}{2}),$
i.e., $s\le 2 + \frac{d}{2},$ which is satisfied by assumption, so 
Lemma \ref{l:t-est} implies 
\begin{equation}\nonumber\begin{aligned}
 \int_0^{t} \langle t-\tau&\rangle^{-\frac{1}{2}(s+\frac{d}{2})} 
	 \|a(\tau)\|_{\dot B^1_{2,1}} \|a_l(\tau)\|_{\dot B^2_{2,1}} d\tau \\
&\lesssim 
 \int_0^{t} \langle t-\tau\rangle^{-\frac{1}{2}(s+\frac{d}{2})} 
\langle \tau\rangle^{-\frac{d}{4}-\frac{1}{2}(2+\frac{d}{2})}  d\tau \times 
(X(t)+D(t))D(t) \\
&\lesssim \langle t\rangle^{-\frac{1}{2}(s+\frac{d}{2})}  (X(t)+D(t))D(t),
\end{aligned}\end{equation}
thanks to \eqref{t:CNSK.dcy.pr.fml1} and the definition of $D(t).$

The bounds pertaining to the term $\|a_h(\tau)\|_{\dot B^2_{2,1}}$ can be 
carried out similarly to the bounds for $\mathcal{L} (Q(a) m).$
Let $0\le t \le 2$ first. Since $\langle t-\tau\rangle \simeq 
\langle \tau\rangle \simeq\langle t\rangle \simeq 1$ 
for all $0\le \tau \le t\le2,$ we have 
\begin{equation}\nonumber\begin{aligned}
\int_0^{t} \langle t-\tau\rangle^{-\frac{1}{2}(s+\frac{d}{2})} 
	 \|a(\tau)\|_{\dot B^1_{2,1}} \|a_h(\tau)\|_{\dot B^2_{2,1}} d\tau
&\lesssim \int_0^{t} \langle \tau\rangle^{-\frac{d}{4}} 
	\|a(\tau)\|_{\dot B^2_{2,1}}^h  d\tau \times D(t) \\
&\lesssim \int_0^{t} \|a(\tau)\|_{\dot B^{\frac{d}{2}+2}_{2,1}}^h d\tau \times D(t) \\
&\lesssim \langle t\rangle^{-\frac{1}{2}(s+\frac{d}{2})}  X(t) D(t)
\end{aligned}\end{equation}
by the definition of $X(t)$ and $D(t).$ 
We next treat the case $t\ge2.$ Splitting the time-integral, we have 
\begin{equation}\nonumber\begin{aligned}
\int_0^{t} \langle t&-\tau\rangle^{-\frac{1}{2}(s+\frac{d}{2})} 
	 \|a(\tau)\|_{\dot B^1_{2,1}} \|a_h(\tau)\|_{\dot B^2_{2,1}} d\tau\\
&= \left(\int_0^{1}+ \int_1^{t} \right) 
	\langle t-\tau\rangle^{-\frac{1}{2}(s+\frac{d}{2})} 
	 \|a(\tau)\|_{\dot B^1_{2,1}} \|a_h(\tau)\|_{\dot B^2_{2,1}}  d\tau 
=:I_3 + I_4.
\end{aligned}\end{equation}
We have 
\begin{equation}\nonumber
I_3 \lesssim  t^{-\frac{1}{2}(s+\frac{d}{2})} 
\int_0^{1}  \|a(\tau)\|_{\dot B^{\frac{d}{2}+2}_{2,1}} d\tau \times D(t)
\lesssim \langle t\rangle^{-\frac{1}{2}(s+\frac{d}{2})} X(t) D(t) 
\end{equation}
by a similar calculation to \eqref{t:CNSK.dcy.pr.low2}. 
On the other hand, for $I_4,$ we have 
\begin{equation}\nonumber
I_4 
\lesssim \langle t \rangle^{-\frac{1}{2}(s+\frac{d}{2})} D(t)^2
\end{equation}
similarly to \eqref{t:CNSK.dcy.pr.I2}, thanks to Lemma \ref{l:t-est} and 
$\frac{1}{2}(s+\frac{d}{2}) \le \frac{d}{4}+\alpha,$
i.e., \eqref{t:CNSK.dcy.pr.ac2}. Thus, we obtain 
\begin{equation}\label{t:CNSK.dcy.pr.low.est2}
\int_0^{t} \langle t-\tau\rangle^{-\frac{1}{2}(s+\frac{d}{2})} 
	\|\div K(a(\tau)) \|_{\dot B^{-\frac{d}{2}}_{2,\infty}}^l d\tau
\lesssim  \langle t\rangle^{-\frac{1}{2}(s+\frac{d}{2})}  (X(t) + D(t)) D(t), 
\end{equation}
provided that $\alpha$ satisfies 
\eqref{t:CNSK.dcy.pr.ac1} and \eqref{t:CNSK.dcy.pr.ac2}.  

\smallbreak
\noindent\underline{\sl Estimate of $ \frac{1}{\rho_*}  \div(m\otimes m)$}: 
We have
\begin{equation}\nonumber\begin{aligned}
\int_0^{t} \langle t-\tau\rangle^{-\frac{1}{2}(s+\frac{d}{2})} 
	\|\div(m\otimes m)(\tau)\|_{\dot B^{-\frac{d}{2}}_{2,\infty}}^l &d\tau 
\lesssim \int_0^{t} \langle t-\tau\rangle^{-\frac{1}{2}(s+\frac{d}{2})} 
	\|(m\otimes m)(\tau) \|_{\dot B^{1-\frac{d}{2}}_{2,\infty}} d\tau \\
&\lesssim \int_0^{t} \langle t-\tau\rangle^{-\frac{1}{2}(s+\frac{d}{2})} 
	 \|m(\tau)\|_{\dot B^0_{2,1}} \|m(\tau)\|_{\dot B^1_{2,1}} d\tau
\end{aligned}\end{equation}
by Lemma \ref{l:prd.g.lmt.p2}. 
Splitting the frequencies of the higher order term by 
$\|m(\tau)\|_{\dot B^1_{2,1}}
\le \|m_l(\tau)\|_{\dot B^1_{2,1}}+\|m_h(\tau)\|_{\dot B^1_{2,1}},$ 
we may carry out the estimates similarly to previous computations. 
Lemma \ref{l:t-est} gives 
\begin{equation}\nonumber\begin{aligned}
 \int_0^{t} \langle t-\tau&\rangle^{-\frac{1}{2}(s+\frac{d}{2})} 
	 \|m(\tau)\|_{\dot B^0_{2,1}} \|m_l(\tau)\|_{\dot B^{1}_{2,1}} d\tau \\
&\lesssim
 \int_0^{t} \langle t-\tau\rangle^{-\frac{1}{2}(s+\frac{d}{2})} 
	\langle \tau\rangle^{-\frac{d}{4}} 
	 \|m_l(\tau)\|_{\dot B^{1}_{2,1}} d\tau \times D(t)\\
&\lesssim 
 \int_0^{t} \langle t-\tau\rangle^{-\frac{1}{2}(s+\frac{d}{2})} 
\langle \tau\rangle^{-\frac{d}{4}-\frac{1}{2}(1+\frac{d}{2})}  d\tau \times 
(X(t)+D(t))D(t) \\
&\lesssim \langle t\rangle^{-\frac{1}{2}(s+\frac{d}{2})} (X(t)+D(t))D(t),
\end{aligned}\end{equation}
thanks to \eqref{t:CNSK.dcy.pr.fml1} and the definition of $D(t).$

For the high frequencies $\|m_h(\tau)\|_{\dot B^1_{2,1}},$ 
let $0\le t \le 2,$ so that we have 
\begin{equation}\nonumber\begin{aligned}
\int_0^{t} \langle t-\tau\rangle^{-\frac{1}{2}(s+\frac{d}{2})} 
	 \|m(\tau)\|_{\dot B^0_{2,1}} \|m_h(\tau)\|_{\dot B^1_{2,1}} d\tau
&\lesssim \int_0^{t} \langle \tau\rangle^{-\frac{d}{4}} 
	\|m(\tau)\|_{\dot B^1_{2,1}}^h  d\tau \ D(t) \\
&\lesssim \int_0^{t} \|m(\tau)\|_{\dot B^{\frac{d}{2}+1}_{2,1}}^h d\tau \ D(t) \\
&\lesssim \langle t\rangle^{-\frac{1}{2}(s+\frac{d}{2})}  X(t) D(t)
\end{aligned}\end{equation}
by the definition of $X(t)$ and $D(t).$ 
We next treat the case $t\ge2.$ Splitting the time-integral, we have 
\begin{equation}\nonumber\begin{aligned}
\int_0^{t} \langle t&-\tau\rangle^{-\frac{1}{2}(s+\frac{d}{2})} 
	 \|m(\tau)\|_{\dot B^0_{2,1}} \|m_h(\tau)\|_{\dot B^1_{2,1}} d\tau\\
&= \left(\int_0^{1}+ \int_1^{t} \right) 
	\langle t-\tau\rangle^{-\frac{1}{2}(s+\frac{d}{2})} 
	 \|m(\tau)\|_{\dot B^0_{2,1}} \|m_h(\tau)\|_{\dot B^1_{2,1}}  d\tau 
=:I_5 + I_6.
\end{aligned}\end{equation}
We have 
\begin{equation}\nonumber
I_5 \lesssim  t^{-\frac{1}{2}(s+\frac{d}{2})} 
\int_0^{1}  \|m(\tau)\|_{\dot B^{\frac{d}{2}+1}_{2,1}} d\tau \times D(t)
\lesssim \langle t\rangle^{-\frac{1}{2}(s+\frac{d}{2})} X(t) D(t) 
\end{equation}
by a similar calculation to \eqref{t:CNSK.dcy.pr.low2}. 
For $I_6,$ we have 
\begin{equation}\nonumber
I_6 \lesssim \langle t \rangle^{-\frac{1}{2}(s+\frac{d}{2})} D(t)^2
\end{equation}
similarly to \eqref{t:CNSK.dcy.pr.I2}, 
thanks to Lemma \ref{l:t-est} and $\frac{1}{2}(s+\frac{d}{2}) \le \frac{d}{4}+\alpha,$
i.e., \eqref{t:CNSK.dcy.pr.ac2}. Thus, we have in the end
\begin{equation}\label{t:CNSK.dcy.pr.low.est3}
\int_0^{t} \langle t-\tau\rangle^{-\frac{1}{2}(s+\frac{d}{2})} 
	\left\| \div(m\otimes m)(\tau) 
		\right\|_{\dot B^{-\frac{d}{2}}_{2,\infty}}^l d\tau
\lesssim  \langle t\rangle^{-\frac{1}{2}(s+\frac{d}{2})}  (X(t) + D(t)) D(t), 
\end{equation}
provided that $\alpha$ satisfies 
\eqref{t:CNSK.dcy.pr.ac1} and \eqref{t:CNSK.dcy.pr.ac2}.  

\smallbreak
\noindent\underline{\sl Estimate of $\div\left( Q(a)m\otimes m \right)$}: 
Lemma \ref{l:prd.g.lmt.p2} and the composition estimate 
(Lemma \ref{l:comp}) give 
\begin{equation}\nonumber\begin{aligned}
\int_0^{t} \langle t-\tau\rangle^{-\frac{1}{2}(s+\frac{d}{2})} &
\|\div\left( Q(a)m\otimes m \right)(\tau) \|_{\dot B^{-\frac{d}{2}}_{2,\infty}}^l d\tau \\
&\lesssim \int_0^{t} \langle t-\tau\rangle^{-\frac{1}{2}(s+\frac{d}{2})} 
	\|(Q(a) m\otimes m)(\tau) \|_{\dot B^{1-\frac{d}{2}}_{2,\infty}} d\tau \\
&\lesssim \int_0^{t} \langle t-\tau\rangle^{-\frac{1}{2}(s+\frac{d}{2})} 
	 \|a(\tau)\|_{\dot B^\frac{d}{2}_{2,1}} \|(m\otimes m)(\tau)\|_{\dot B^{1-\frac{d}{2}}_{2,1}} d\tau.
\end{aligned}\end{equation}
At this point, we notice that $\|a(\tau)\|_{\dot B^\frac{d}{2}_{2,1}} \le X(t)$ and
\begin{equation}\nonumber
 \|m\otimes m\|_{\dot B^{1-\frac{d}{2}}_{2,1}} 
 = \sum_{ij} \| m_i m_j \|_{\dot B^{1-\frac{d}{2}}_{2,1}} 
 \lesssim \sum_{ij} \| \nabla (m_i m_j)\|_{\dot B^{-\frac{d}{2}}_{2,1}}
\end{equation}
by Bernstein inequality, 
so the situation coincides with that of $\div(m\otimes m).$ 
By the same computation as that for \eqref{t:CNSK.dcy.pr.low.est3}, 
we readily obtain
\begin{equation}\label{t:CNSK.dcy.pr.low.est4}
\int_0^{t} \langle t-\tau\rangle^{-\frac{1}{2}(s+\frac{d}{2})} 
	\left\| \div(Q(a) m\otimes m)(\tau) 
		\right\|_{\dot B^{-\frac{d}{2}}_{2,\infty}}^l d\tau
\lesssim  \langle t\rangle^{-\frac{1}{2}(s+\frac{d}{2})}  X(t)(X(t) + D(t)) D(t), 
\end{equation}
provided that $\alpha$ satisfies 
\eqref{t:CNSK.dcy.pr.ac1} and \eqref{t:CNSK.dcy.pr.ac2}.  

\smallbreak
\noindent\underline{\sl Estimate of $(P'(a) - P'(0)) \nabla a$}: 
Lemma \ref{l:prd.g.lmt.p2} and Lemma \ref{l:comp} give 
\begin{equation}\nonumber\begin{aligned}
\int_0^{t} \langle t-\tau\rangle^{-\frac{1}{2}(s+\frac{d}{2})} &
\|(P'(a(\tau)) - P'(0)) \nabla a(\tau)\|_{\dot B^{-\frac{d}{2}}_{2,\infty}}^l d\tau \\
&\lesssim \int_0^{t} \langle t-\tau\rangle^{-\frac{1}{2}(s+\frac{d}{2})} 
	 \|P'(a(\tau)) - P'(0)\|_{\dot B^{0}_{2,1}} \|a(\tau)\|_{\dot B^{1}_{2,1}} d\tau \\
&\lesssim \int_0^{t} \langle t-\tau\rangle^{-\frac{1}{2}(s+\frac{d}{2})} 
	 \|a(\tau)\|_{\dot B^{0}_{2,1}} \|a(\tau)\|_{\dot B^{1}_{2,1}} d\tau.
\end{aligned}\end{equation}
By the same computation as that for \eqref{t:CNSK.dcy.pr.low.est1}, we obtain
\begin{equation}\label{t:CNSK.dcy.pr.low.est5}
\int_0^{t} \langle t-\tau\rangle^{-\frac{1}{2}(s+\frac{d}{2})} 
	\left\| (P'(a(\tau) ) - P'(0)) \nabla a(\tau) 
		\right\|_{\dot B^{-\frac{d}{2}}_{2,\infty}}^l d\tau
\lesssim  \langle t\rangle^{-\frac{1}{2}(s+\frac{d}{2})}  (X(t) + D(t)) D(t), 
\end{equation}
provided that $\alpha$ satisfies 
\eqref{t:CNSK.dcy.pr.ac1} and \eqref{t:CNSK.dcy.pr.ac2}.  

\bigbreak
In the end, inserting \eqref{t:CNSK.dcy.pr.low.est1}, 
\eqref{t:CNSK.dcy.pr.low.est2}, \eqref{t:CNSK.dcy.pr.low.est3}, 
\eqref{t:CNSK.dcy.pr.low.est4} and \eqref{t:CNSK.dcy.pr.low.est5} into 
\eqref{t:CNSK.dcy.pr.low.est}, we deduce that 
\begin{equation}\label{t:CNSK.dcy.pr.low.est.f}\begin{aligned}
\sup_{0\le \tau \le t} \langle \tau\rangle^{\frac{1}{2}(s+\frac{d}{2})} \|U(\tau)\|_{\dot B^{s}_{2,1}}^l \le C
\left( \|U_0\|_{\dot B^{-\frac{d}{2}}_{2,\infty}}^l
+  X(t)(X(t) + D(t)) D(t)\right),
\end{aligned}\end{equation}
given that \eqref{t:CNSK.dcy.pr.ac1} and \eqref{t:CNSK.dcy.pr.ac2} hold, 
where the constant $C$ depends on $d,$ $\mu,$ $\lambda,$ $\kappa$ 
and $\gamma$ and the frequency-threshold $j_0$ can be arbitrary.  

\bigbreak
\noindent\underline{\bf Step 2: Bounds for the high frequencies.} 
Rewriting System \eqref{CNSK} in terms of the weighted unknowns 
$(t^{\alpha} a, t^{\alpha} m),$ we have 
\begin{equation}\label{t:CNSK.dcy.pr.tsys}
\left\{\begin{aligned}
&\partial_t (t^{\alpha} a) + \div (t^{\alpha} m) = \alpha t^{\alpha-1} a, \\
&\partial_t (t^{\alpha} m) - \frac{1}{\rho_{*}} \mathcal{L} (t^{\alpha} m) 
- \kappa\rho_{*} \Delta \nabla(t^{\alpha} a) + \gamma \nabla(t^{\alpha} a) 
= \alpha t^{\alpha-1} m + t^{\alpha} F(a,m), \\
&(t^{\alpha} a, t^{\alpha} m)|_{t=0} = (0,0), 
\end{aligned}\right.
\end{equation}
Using Lemma \ref{l:lFH} with $a \equiv t^{\alpha} a,$  
$m \equiv t^{\alpha} m,$ $f \equiv \alpha t^{\alpha-1} a,$ 
$g \equiv \alpha t^{\alpha-1} v + t^{\alpha} F(a,m)$, 
$s \equiv \frac{d}{2}+1$ and $q\equiv r\equiv \infty,$ we have 
\begin{equation}\nonumber\begin{aligned}
\|\tau^{\alpha} ((\gamma + &\Lambda)a, m)
	\|_{\widetilde{L^\infty_t} (\dot B^{\frac{d}{2}+1}_{2,1})}^h 
&\lesssim
\|(\gamma + \Lambda) (\alpha \tau^{\alpha-1} a) +\alpha \tau^{\alpha-1} m + \tau^{\alpha} F(a,m)
	\|_{\widetilde{L^\infty_t} (\dot B^{\frac{d}{2}-1}_{2,1})}^h,
\end{aligned}\end{equation}
where the constant appearing on the right hand-side 
does not depend on $j_0.$ 

We first handle the lower order linear terms on the right hand-side of the 
above:
\begin{equation}\nonumber
\|(\gamma + \Lambda) (\alpha \tau^{\alpha-1} a) +\alpha \tau^{\alpha-1} m 
	\|_{\widetilde{L^\infty_t} (\dot B^{\frac{d}{2}-1}_{2,1})}^h.
\end{equation}
For $u\in\{(\gamma + \Lambda)a), m\},$ we have 
\begin{equation}\nonumber
\| \alpha \tau^{\alpha-1} u\|_{\widetilde{L^\infty_t} (\dot B^{\frac{d}{2}-1}_{2,1})}^h
\le \| u\|_{\widetilde{L^\infty_t} (\dot B^{\frac{d}{2}-1}_{2,1})} \le X(t)
\end{equation}
when $0\le\tau\le t\le 2$ and
\begin{equation}\nonumber
\| \alpha \tau^{\alpha-1} u
	\|_{\widetilde{L^\infty} (0,1;\dot B^{\frac{d}{2}-1}_{2,1})}^h
\le \| u\|_{\widetilde{L^\infty_t} (\dot B^{\frac{d}{2}-1}_{2,1})} \le X(t)
\end{equation}
when $t\ge2$ and $0\le \tau \le 1.$ 
Moreover, when $t\ge2$ and $1\le\tau\le t,$ 
by the frequency cut-off of the norm, we have
\begin{equation}\nonumber\begin{aligned}
\| \alpha \tau^{\alpha-1} u\|_{\widetilde{L^\infty} (1,t; \dot B^{\frac{d}{2}-1}_{2,1})}^h
&= \alpha \sum_{j\ge j_0} 2^{j(\frac{d}{2}+1)} 2^{-2j} \|\tau^{\alpha-1} u\|_{L^{\infty}(1,t; L^2)}  \\
&\le \alpha 2^{-2j_0} 
\sum_{j\ge j_0} 2^{j(\frac{d}{2}+1)} 
	\|\tau^{\alpha} u\|_{L^{\infty}(0,t; L^2)} \\
&\le \alpha 2^{-2j_0} 
     \| \tau^{\alpha} u\|_{\widetilde{L^\infty_t} (\dot B^{\frac{d}{2}+1}_{2,1})}^h.
\end{aligned}\end{equation}
Taking $j_0\in\Z,$ such that $\alpha 2^{-2j_0} < \frac{1}{4},$ for instance, 
we may absorb the contributions of these terms into the left hand-side. 
As $\alpha$ only depends on the dimension and the parameter $\ep,$ 
so does $j_0.$ Thus, we arrive at 
\begin{equation}\label{t:CNSK.dcy.pr.hgh.est}
\begin{aligned}
\|\tau^{\alpha} ((\gamma + &\Lambda)a, m)
	\|_{\widetilde{L^\infty_t} (\dot B^{\frac{d}{2}+1}_{2,1})}^h 
&\lesssim X(t) + \|\tau^{\alpha} F(a,m)
		\|_{\widetilde{L^\infty_t} (\dot B^{\frac{d}{2}-1}_{2,1})}^h,
\end{aligned}\end{equation}
where the frequency-threshold $j_0$ depends only on the dimension $d$ and the given constants such as $\mu,$ $\lambda,$ $\kappa$ and $\gamma.$ 
It now comes down to estimating the nonlinear terms. 

\smallbreak
\noindent\underline{\sl Estimate of $\mathcal{L} (Q(a) m)$}: 
\begin{equation}\nonumber\begin{aligned}
\|\tau^{\alpha} \mathcal{L} (Q(a) m)
	&\|_{\widetilde{L^\infty_t} (\dot B^{\frac{d}{2}-1}_{2,1})}^h 
\lesssim \sum_{ij} \|\tau^{\alpha} \partial_i \partial_j (Q(a) m)
	\|_{\widetilde{L^\infty_t} (\dot B^{\frac{d}{2}-1}_{2,1})}^h \\
&\lesssim  \sum_{ij}  \|\tau^{\alpha} 
(\partial_i \partial_j  (Q(a)) m + 2 \partial_i (Q(a)) \partial_j m 
+ Q(a) \partial_i\partial_j m ) 
\|_{\widetilde{L^\infty_t} (\dot B^{\frac{d}{2}-1}_{2,1})}^h.
\end{aligned}\end{equation}
We have 
\begin{equation}\nonumber\begin{aligned}
\|\tau^{\alpha} \partial_i \partial_j  (Q(a)) m
	&\|_{\widetilde{L^\infty_t} (\dot B^{\frac{d}{2}-1}_{2,1})} 
\lesssim \|\tau^{\alpha} \partial_i \partial_j  (Q(a)) 
	\|_{\widetilde{L^\infty_t} (\dot B^{\frac{d}{2}}_{2,1})}
\| m\|_{\widetilde{L^\infty_t} (\dot B^{\frac{d}{2}-1}_{2,1})} \\
&\lesssim \|\tau^{\alpha} a \|_{\widetilde{L^\infty_t} (\dot B^{\frac{d}{2}+2}_{2,1})}^h
\| m\|_{\widetilde{L^\infty_t} (\dot B^{\frac{d}{2}-1}_{2,1})} 
\lesssim  X(t) \|\tau^{\alpha} a \|_{\widetilde{L^\infty_t} (\dot B^{\frac{d}{2}+2}_{2,1})};
\end{aligned}\end{equation}
\begin{equation}\nonumber\begin{aligned}
\|\tau^{\alpha}\partial_i (Q(a)) \partial_j m 
	&\|_{\widetilde{L^\infty_t} (\dot B^{\frac{d}{2}-1}_{2,1})} 
\lesssim \| \partial_i (Q(a))
	\|_{\widetilde{L^\infty_t} (\dot B^{\frac{d}{2}-1}_{2,1})}
\| \tau^{\alpha} \partial_j m  \|_{\widetilde{L^\infty_t} (\dot B^{\frac{d}{2}}_{2,1})} \\
&\lesssim \| a\|_{\widetilde{L^\infty_t} (\dot B^{\frac{d}{2}}_{2,1})}
\| \tau^{\alpha}  m\|_{\widetilde{L^\infty_t} (\dot B^{\frac{d}{2}+1}_{2,1})} 
\lesssim X(t) 
 \| \tau^{\alpha}  m\|_{\widetilde{L^\infty_t} (\dot B^{\frac{d}{2}+1}_{2,1})};
\end{aligned}\end{equation}
\begin{equation}\nonumber\begin{aligned}
\|\tau^{\alpha}  Q(a) \partial_i\partial_j m 
	&\|_{\widetilde{L^\infty_t} (\dot B^{\frac{d}{2}-1}_{2,1})} 
\lesssim \|  Q(a) 
	\|_{\widetilde{L^\infty_t} (\dot B^{\frac{d}{2}}_{2,1})}
\| \tau^{\alpha} \partial_i\partial_j m  \|_{\widetilde{L^\infty_t} (\dot B^{\frac{d}{2}-1}_{2,1})} \\
&\lesssim \| a\|_{\widetilde{L^\infty_t} (\dot B^{\frac{d}{2}-1}_{2,1})}
\| \tau^{\alpha}  m\|_{\widetilde{L^\infty_t} (\dot B^{\frac{d}{2}+1}_{2,1})} 
\lesssim X(t)
\|\tau^{\alpha} m\|_{\widetilde{L^\infty_t} (\dot B^{\frac{d}{2}+1}_{2,1})}.
\end{aligned}\end{equation}
Note that we have 
\begin{equation}\nonumber
\|\tau^{\alpha} a \|_{\widetilde{L^\infty_t} (\dot B^{\frac{d}{2}+2}_{2,1})}
\lesssim 
\|\langle \tau\rangle^{\alpha} a \|_{L^\infty_t (\dot B^{\frac{d}{2}+2-\ep}_{2,1})}^l
+ \|\tau^{\alpha} a \|_{\widetilde{L^\infty_t} (\dot B^{\frac{d}{2}+2}_{2,1})}^h
\lesssim D(t),
\end{equation}
provided that $\alpha \le \frac{1}{2} (\frac{d}{2}+2-\ep + \frac{d}{2}),$ i.e., 
\begin{equation}\nonumber
\alpha \le  \frac{1}{2} (d+2-\ep) = \frac{d}{2} +1 -\frac{\ep}{2}.
\end{equation}
We also have 
\begin{equation}\nonumber
\|\tau^{\alpha} m\|_{\widetilde{L^\infty_t} (\dot B^{\frac{d}{2}+1}_{2,1})}
\lesssim
\|\langle\tau\rangle^{\alpha} m\|_{L^\infty_t (\dot B^{\frac{d}{2}+1-\ep}_{2,1})}^l
+ \|\tau^{\alpha} m\|_{\widetilde{L^\infty_t} (\dot B^{\frac{d}{2}+1}_{2,1})}^h
\lesssim D(t),
\end{equation}
provided that $\alpha \le \frac{1}{2} (\frac{d}{2}+1-\ep + \frac{d}{2}),$ i.e., 
\begin{equation}\label{t:CNSK.dcy.pr.ac3}
\alpha \le  \frac{1}{2} (d+1-\ep).
\end{equation}
Thus, we have 
\begin{equation}\label{t:CNSK.dcy.pr.hgh.est1}
\|\tau^{\alpha} \mathcal{L} (Q(a) m)
	\|_{\widetilde{L^\infty_t} (\dot B^{\frac{d}{2}-1}_{2,1})}^h 
\lesssim X(t) D(t),
\end{equation}
provided that $\alpha$ satisfies \eqref{t:CNSK.dcy.pr.ac3}. 

\smallbreak
\noindent\underline{\sl Estimate of $\div\left( K(a) \right)$}: 
Since $\div \left( K(a) \right) = a \nabla\Delta a,$ we have 
\begin{equation}\nonumber\begin{aligned}
\|\tau^{\alpha} \div\left( K(a) \right)
	\|_{\widetilde{L^\infty_t} (\dot B^{\frac{d}{2}-1}_{2,1})}^h 
&= \|\tau^{\alpha} a \nabla\Delta a
	\|_{\widetilde{L^\infty_t} (\dot B^{\frac{d}{2}-1}_{2,1})}^h 
\lesssim
 \| a\|_{\widetilde{L^\infty_t} (\dot B^{\frac{d}{2}}_{2,1})}
 \|\tau^{\alpha} \Delta a
	\|_{\widetilde{L^\infty_t} (\dot B^{\frac{d}{2}}_{2,1})} \\
&\lesssim
X(t)  \|\tau^{\alpha} \Delta a
	\|_{\widetilde{L^\infty_t} (\dot B^{\frac{d}{2}}_{2,1})}. 
\end{aligned}\end{equation}
We have 
\begin{equation}\nonumber\begin{aligned}
  \|\tau^{\alpha} \Delta a \|_{\widetilde{L^\infty_t} (\dot B^{\frac{d}{2}}_{2,1})} 
\lesssim \|\tau^{\alpha} a \|_{L^\infty_t (\dot B^{\frac{d}{2}+2-\ep}_{2,1})}^l 
 + \|\tau^{\alpha} \Delta a \|_{\widetilde{L^\infty_t} (\dot B^{\frac{d}{2}}_{2,1})}^h, 
\end{aligned}\end{equation}
provided that $\alpha \le \frac{1}{2}(\frac{d}{2}+2-\ep + \frac{d}{2}),$ i.e., 
$\alpha \le \frac{1}{2}(d+2-\ep),$ 
which is satisfied if \eqref{t:CNSK.dcy.pr.ac3} holds. 
Thus 
\begin{equation}\label{t:CNSK.dcy.pr.hgh.est2}
\|\tau^{\alpha} \div \left( K(a) \right)
	\|_{\widetilde{L^\infty_t} (\dot B^{\frac{d}{2}-1}_{2,1})}^h 
\lesssim X(t) D(t)
\end{equation}
holds, provided that $\alpha$ satisfies \eqref{t:CNSK.dcy.pr.ac3}.

\smallbreak
\noindent\underline{\sl Estimate of $ \frac{1}{\rho_*}  \div(m\otimes m)$}: 
We have 
\begin{equation}\nonumber\begin{aligned}
\|\tau^{\alpha} \div(m\otimes m)
	\|_{\widetilde{L^\infty_t} (\dot B^{\frac{d}{2}-1}_{2,1})}^h 
&= \|\tau^{\alpha} (m \cdot \nabla m + m \div m)
	\|_{\widetilde{L^\infty_t} (\dot B^{\frac{d}{2}-1}_{2,1})}^h \\
&\lesssim
 \| m\|_{\widetilde{L^\infty_t} (\dot B^{\frac{d}{2}-1}_{2,1})}
 \|\tau^{\alpha} \nabla m
	\|_{\widetilde{L^\infty_t} (\dot B^{\frac{d}{2}}_{2,1})} \\
&\lesssim
X(t)  \|\tau^{\alpha} \nabla m 
	\|_{\widetilde{L^\infty_t} (\dot B^{\frac{d}{2}}_{2,1})}. 
\end{aligned}\end{equation}
As for the decay functional, we deduce 
\begin{equation}\nonumber\begin{aligned}
\|\tau^{\alpha} \nabla m \|_{\widetilde{L^\infty_t} (\dot B^{\frac{d}{2}}_{2,1})} 
\lesssim \|\tau^{\alpha} m \|_{L^\infty_t (\dot B^{\frac{d}{2}+1-\ep}_{2,1})}^l 
 + \|\tau^{\alpha} m \|_{\widetilde{L^\infty_t} (\dot B^{\frac{d}{2}+1}_{2,1})}^h, 
\end{aligned}\end{equation}
$\alpha \le \frac{1}{2}(\frac{d}{2}+1-\ep + \frac{d}{2}),$ i.e.,  
\eqref{t:CNSK.dcy.pr.ac3}. 
Thus, we have 
\begin{equation}\label{t:CNSK.dcy.pr.hgh.est3}
\|\tau^{\alpha} \div(m\otimes m)
	\|_{\widetilde{L^\infty_t} (\dot B^{\frac{d}{2}-1}_{2,1})}^h 
\lesssim X(t) D(t),
\end{equation}
provided that \eqref{t:CNSK.dcy.pr.ac3} holds. 

\smallbreak
\noindent\underline{\sl Estimate of $\div\left( Q(a)m\otimes m \right)$}: 
We have 
\begin{equation}\nonumber\begin{aligned}
\|\tau^{\alpha} \div\left( Q(a)m\otimes m \right)
	\|_{\widetilde{L^\infty_t} (\dot B^{\frac{d}{2}-1}_{2,1})}^h 
&\lesssim\|\tau^{\alpha} Q(a)m\otimes m 
	\|_{\widetilde{L^\infty_t} (\dot B^{\frac{d}{2}}_{2,1})}^h \\
&\lesssim\| a\|_{\widetilde{L^\infty_t} (\dot B^{\frac{d}{2}}_{2,1})} 
\| \tau^{\alpha} m\otimes m 
	\|_{\widetilde{L^\infty_t} (\dot B^{\frac{d}{2}}_{2,1})} \\
&\lesssim X(t)
\sum_{j} \| \tau^{\alpha} \partial_j (m\otimes m) 
	\|_{\widetilde{L^\infty_t} (\dot B^{\frac{d}{2}}_{2,1})}. 
\end{aligned}\end{equation}
By \eqref{t:CNSK.dcy.pr.hgh.est3}, we obtain
\begin{equation}\label{t:CNSK.dcy.pr.hgh.est4}
\|\tau^{\alpha} \div\left( Q(a)m\otimes m \right)
	\|_{\widetilde{L^\infty_t} (\dot B^{\frac{d}{2}-1}_{2,1})}^h 
\lesssim X(t) D(t),
\end{equation}
provided that \eqref{t:CNSK.dcy.pr.ac3} holds. 

\smallbreak
\noindent\underline{\sl Estimate of $(P'(a) - P'(0)) \nabla a$}: 
We have 
\begin{equation}\nonumber\begin{aligned}
\|\tau^{\alpha} (P'(a) - P'(0)) \nabla a
	\|_{\widetilde{L^\infty_t} (\dot B^{\frac{d}{2}-1}_{2,1})}^h 
&\lesssim \|a\|_{\widetilde{L^\infty_t} (\dot B^{\frac{d}{2}-1}_{2,1})} 
\| \tau^{\alpha} \nabla a \|_{\widetilde{L^\infty_t} (\dot B^{\frac{d}{2}}_{2,1})} \\
&\lesssim X(t) \| \tau^{\alpha} \nabla a
	\|_{\widetilde{L^\infty_t} (\dot B^{\frac{d}{2}}_{2,1})},
\end{aligned}\end{equation}
in which we deduce for the decay functional 
\begin{equation}\nonumber\begin{aligned}
 \| \tau^{\alpha} \nabla a \|_{\widetilde{L^\infty_t} (\dot B^{\frac{d}{2}}_{2,1})}
\lesssim \|\tau^{\alpha} a \|_{L^\infty_t (\dot B^{\frac{d}{2}+1-\ep}_{2,1})}^l 
 + \|\tau^{\alpha} a \|_{\widetilde{L^\infty_t} (\dot B^{\frac{d}{2}+1}_{2,1})}^h, 
\end{aligned}\end{equation}
if \eqref{t:CNSK.dcy.pr.ac3} holds. Thus, this implies 
\begin{equation}\label{t:CNSK.dcy.pr.hgh.est5}
\|\tau^{\alpha} \div\left( Q(a)m\otimes m \right)
	\|_{\widetilde{L^\infty_t} (\dot B^{\frac{d}{2}-1}_{2,1})}^h 
\lesssim X(t) D(t),
\end{equation}
provided that \eqref{t:CNSK.dcy.pr.ac3} holds. 

\bigbreak
Inserting 
\eqref{t:CNSK.dcy.pr.hgh.est1}, \eqref{t:CNSK.dcy.pr.hgh.est2}, 
\eqref{t:CNSK.dcy.pr.hgh.est3}, \eqref{t:CNSK.dcy.pr.hgh.est4} and 
\eqref{t:CNSK.dcy.pr.hgh.est5} into \eqref{t:CNSK.dcy.pr.hgh.est}, 
we deduce that  
there exists some frequency-threshold $j_0$ and a constant $C$ 
depending only on $d,$ $\mu,$ $\lambda,$ $\kappa$ and $\gamma$ 
such that the following inequality holds:  
\begin{equation}\label{t:CNSK.dcy.pr.hgh.est.f}
\begin{aligned}
\|\tau^{\alpha} ((\gamma + &\Lambda)a, m)
	\|_{\widetilde{L^\infty_t} (\dot B^{\frac{d}{2}+1}_{2,1})}^h 
& \le C( X(t) + (X(t) + D(t)) D(t) ). 
\end{aligned}\end{equation}

\medbreak
\noindent\underline{\bf Conclusion.}
By \eqref{t:CNSK.dcy.pr.ac1}, \eqref{t:CNSK.dcy.pr.ac2} 
and \eqref{t:CNSK.dcy.pr.ac3}, we now know that $\alpha$ 
must satisfy 
\begin{equation}\nonumber
\max\left\{\frac{d}{4}, \frac{s}{2} \right\}
	\le \alpha \le \frac{1}{2} (d+1-\ep)
\end{equation}
for some arbitrary small $\ep>0.$ 
Thus, the choice of $\alpha =\frac{1}{2} (d+1)-\ep$ is justified 
under our assumption on $s.$  

Putting together \eqref{t:CNSK.dcy.pr.low.est.f} 
and \eqref{t:CNSK.dcy.pr.hgh.est.f}, we finally arrive at 
\begin{equation}\nonumber
D(t) \lesssim D_0 + X(t) +X(t)^2 + D(t)^2. 
\end{equation}
As Theorem \ref{t:L2.WP} ensures that $X(t) \lesssim X_0$ 
with $X_0$ being small, 
we may now conclude that \eqref{t:CNSK.dcy:smllsol} is fulfilled 
for all time provided that $D_0$ and $X_0$ are small enough. 
This concludes Theorem \ref{t:CNSK.dcy}. 
\end{proof}

\bigbreak
\begin{proof}[Proof of Theorem \ref{t:CNSK.dcy.s0}]
As much of the argument is similar to the proof of Theorem \ref{t:CNSK.dcy}, 
we only sketch the proof here. 
As before, for notational simplicity, we denote  
\begin{equation}\nonumber\begin{aligned}
\tilde X(t) := \|(a,m)\|_{\widetilde{CL}_T}  = \|(\Lambda a, m)
		\|_{\widetilde{L^\infty_t}(\dot B^{\frac{d}{2}-1}_{2,1})} 
+ \|\Lambda^2 (\Lambda a, m)
	\|_{\widetilde{L^1_t}(\dot B^{\frac{d}{2}-1}_{2,1})}.
\end{aligned}\end{equation}
and
$\tilde X_0 := \|(\Lambda a_0, m_0) \|_{\dot{B}^{\frac{d}{2}-1}_{2,1}}.$

Note that, for any solution to \eqref{CNSK} with $\gamma=0$ constructed by 
Theorem \ref{t:L2.WP.szero}, if in addition $\tilde D(t)<\infty,$ we have
\begin{equation}\label{bnd:tlDt.h.inh}
\begin{aligned}
\| \langle\tau\rangle^{\alpha}(\Lambda a, m)
	\|_{\widetilde{L^\infty_t}(\dot{B}^{\frac{d}{2}-3}_{2,1}\cap\dot B^{\frac{d}{2}-1}_{p,1})}^h 
&\lesssim \|(\Lambda a, m)
	\|_{\widetilde{L^\infty_t}(\dot{B}^{\frac{d}{2}-3}_{2,1}\cap\dot B^{\frac{d}{2}-1}_{p,1})}^h 
+ \|\tau^{\alpha}(\Lambda a, m)
	\|_{\widetilde{L^\infty_t}(\dot B^{\frac{d}{2}+1}_{p,1})}^h \\
&\lesssim \tilde X(t) + \tilde D(t) < \infty. 
\end{aligned}\end{equation}
In addition, similarly to \eqref{t:CNSK.dcy.pr.fml1}, we have
\begin{equation}\nonumber
\sup_{0 \le \tau \le t} \langle \tau\rangle^{\frac{d}{4}} 
\|(\Lambda a,m)(\tau)\|_{\dot B^0_{2,1}} \lesssim \tilde X(t) + \tilde D(t). 
\end{equation}

Starting from \eqref{t:CNSK.dcy.pr.low.est}, our task is to bound 
\begin{equation}\nonumber
\int_0^{t} \langle t-\tau\rangle^{-\frac{1}{2}(s+\frac{d}{2})} 
	\|N (a,m)(\tau)\|_{\dot B^{-\frac{d}{2}}_{2,\infty}}^l d\tau, 
\end{equation}
provided that \eqref{t:CNSK.dcy.pr.ac1} holds. 

Identical calculations leading to \eqref{t:CNSK.dcy.pr.low.est1},
\eqref{t:CNSK.dcy.pr.low.est2}, \eqref{t:CNSK.dcy.pr.low.est3}, 
\eqref{t:CNSK.dcy.pr.low.est4} and 
\eqref{t:CNSK.dcy.pr.low.est5} yield 
\begin{equation}\label{t:CNSK.dcy.s0.pr.low.est1}
\int_0^{t} \langle t-\tau\rangle^{-\frac{1}{2}(s+\frac{d}{2})} 
	\|\mathcal{L} (Q(a) m)(\tau)\|_{\dot B^{-\frac{d}{2}}_{2,\infty}}^l d\tau
\lesssim  \langle t\rangle^{-\frac{1}{2}(s+\frac{d}{2})} 
	 (\tilde X(t) + \tilde D(t)) \tilde D(t), 
\end{equation}
\begin{equation}\label{t:CNSK.dcy.s0.pr.low.est2}
\int_0^{t} \langle t-\tau\rangle^{-\frac{1}{2}(s+\frac{d}{2})} 
	\|\div K(a(\tau)) \|_{\dot B^{-\frac{d}{2}}_{2,\infty}}^l d\tau
\lesssim  \langle t\rangle^{-\frac{1}{2}(s+\frac{d}{2})}  (\tilde X(t) + \tilde D(t))\tilde D(t), 
\end{equation}
\begin{equation}\label{t:CNSK.dcy.s0.pr.low.est3}
\int_0^{t} \langle t-\tau\rangle^{-\frac{1}{2}(s+\frac{d}{2})} 
	\left\| \div(m\otimes m)(\tau) 
		\right\|_{\dot B^{-\frac{d}{2}}_{2,\infty}}^l d\tau
\lesssim  \langle t\rangle^{-\frac{1}{2}(s+\frac{d}{2})}  (\tilde X(t) +\tilde D(t))\tilde D(t), 
\end{equation}
\begin{equation}\label{t:CNSK.dcy.s0.pr.low.est4}
\int_0^{t} \langle t-\tau\rangle^{-\frac{1}{2}(s+\frac{d}{2})} 
	\left\| \div(Q(a) m\otimes m)(\tau) 
		\right\|_{\dot B^{-\frac{d}{2}}_{2,\infty}}^l d\tau
\lesssim  \langle t\rangle^{-\frac{1}{2}(s+\frac{d}{2})}  \tilde X(t)(\tilde X(t) +\tilde D(t))\tilde D(t) 
\end{equation}
and
\begin{equation}\label{t:CNSK.dcy.s0.pr.low.est5}
\int_0^{t} \langle t-\tau\rangle^{-\frac{1}{2}(s+\frac{d}{2})} 
	\left\| \nabla(a \tilde P(a) )(\tau) 
		\right\|_{\dot B^{-\frac{d}{2}}_{2,\infty}}^l d\tau
\lesssim  \langle t\rangle^{-\frac{1}{2}(s+\frac{d}{2})}  (\tilde X(t) + \tilde D(t))^2, 
\end{equation}
respectively, provided that $\alpha$ satisfies 
\eqref{t:CNSK.dcy.pr.ac1} and \eqref{t:CNSK.dcy.pr.ac2}. 
In the end, by \eqref{t:CNSK.dcy.s0.pr.low.est1}, 
\eqref{t:CNSK.dcy.s0.pr.low.est2}, \eqref{t:CNSK.dcy.s0.pr.low.est3},
\eqref{t:CNSK.dcy.s0.pr.low.est4} and \eqref{t:CNSK.dcy.s0.pr.low.est5}, 
we obtain 
\begin{equation}\label{t:CNSK.dcy.s0.pr.low.est.f}
\sup_{0\le \tau \le t} \langle \tau\rangle^{\frac{1}{2}(s+\frac{d}{2})} \|U(\tau)\|_{\dot B^{s}_{2,1}}^l \le C
\left( \|U_0\|_{\dot B^{-\frac{d}{2}}_{2,\infty}}^l
+ \tilde X(t)(\tilde X(t) +\tilde D(t)) \tilde D(t)\right),
\end{equation}
where the constant $C$ depends on $d,$ $\mu,$ $\lambda$ and $\kappa$ 
and the frequency-threshold $j_0$ can be arbitrary.  

\medbreak
The bounds for the high frequencies are also carried out in the same way 
as the proof for Theorem \ref{t:CNSK.dcy}. 
Applying Lemma \ref{l:lFH} to \eqref{t:CNSK.dcy.pr.tsys} with $\gamma\equiv0$
and employing the same frequency cut-off argument leading to 
\eqref{t:CNSK.dcy.pr.hgh.est}, we obtain 
\begin{equation}\nonumber
\|\tau^{\alpha} (\Lambda a, m)
	\|_{\widetilde{L^\infty_t} (\dot B^{\frac{d}{2}+1}_{2,1})}^h 
\lesssim \tilde X(t) + \|\tau^{\alpha} F(a,m)
		\|_{\widetilde{L^\infty_t} (\dot B^{\frac{d}{2}-1}_{2,1})}^h,
\end{equation}
where the frequency-threshold $j_0$ depends only on the dimension $d$ and the given constants such as $\mu,$ $\lambda,$ $\kappa.$ 

We begin estimating the nonlinear terms in the high-frequency regime. 
This too is carried out in the same way as before. 
By the same computations leading to 
\eqref{t:CNSK.dcy.pr.hgh.est1}, \eqref{t:CNSK.dcy.pr.hgh.est2}, 
\eqref{t:CNSK.dcy.pr.hgh.est3}, \eqref{t:CNSK.dcy.pr.hgh.est4}
we may easily see 
\begin{equation}\label{t:CNSK.dcy.s0.pr.hgh.est1}
\|\tau^{\alpha} \mathcal{L} (Q(a) m)
	\|_{\widetilde{L^\infty_t} (\dot B^{\frac{d}{2}-1}_{2,1})}^h 
\lesssim \tilde X(t) \tilde D(t),
\end{equation}
\begin{equation}\label{t:CNSK.dcy.s0.pr.hgh.est2}
\|\tau^{\alpha} \div \left( K(a) \right)
	\|_{\widetilde{L^\infty_t} (\dot B^{\frac{d}{2}-1}_{2,1})}^h
\lesssim \tilde X(t) \tilde D(t),
\end{equation}
\begin{equation}\label{t:CNSK.dcy.s0.pr.hgh.est3}
\|\tau^{\alpha} \div(m\otimes m)
	\|_{\widetilde{L^\infty_t} (\dot B^{\frac{d}{2}-1}_{2,1})}^h 
\lesssim \tilde X(t) \tilde D(t),
\end{equation}
\begin{equation}\label{t:CNSK.dcy.s0.pr.hgh.est4}
\|\tau^{\alpha} \div\left( Q(a)m\otimes m \right)
	\|_{\widetilde{L^\infty_t} (\dot B^{\frac{d}{2}-1}_{2,1})}^h 
\lesssim \tilde X(t) \tilde D(t),
\end{equation}
respectively, provided that $\alpha$ satisfies \eqref{t:CNSK.dcy.pr.ac3}.

Lastly, to estimate the pressure term, we have to 
slightly modify the calculation of \eqref{t:CNSK.dcy.pr.hgh.est5}. 
We have 
\begin{equation}\label{t:CNSK.dcy.s0.pr.hgh.est5}
\begin{aligned}
\|\tau^{\alpha} (P'(a) - P'(0)) \nabla a
	\|_{\widetilde{L^\infty_t} (\dot B^{\frac{d}{2}-1}_{2,1})}^h 
&\lesssim \|a\|_{\widetilde{L^\infty_t} (\dot B^{\frac{d}{2}}_{2,1})} 
\| \tau^{\alpha} \nabla a \|_{\widetilde{L^\infty_t} (\dot B^{\frac{d}{2}-1}_{2,1})} \\
&\lesssim \tilde X(t) \| \langle\tau\rangle^{\alpha} \nabla a
	\|_{\widetilde{L^\infty_t} (\dot B^{\frac{d}{2}-1}_{2,1})} \\
&\lesssim \tilde X(t) (\tilde X(t) + \tilde D(t)),
\end{aligned}\end{equation}
where we have used \eqref{bnd:tlDt.h.inh} in the last inequality. 
Thus, combining \eqref{t:CNSK.dcy.s0.pr.hgh.est1},
\eqref{t:CNSK.dcy.s0.pr.hgh.est2}, \eqref{t:CNSK.dcy.s0.pr.hgh.est3},
\eqref{t:CNSK.dcy.s0.pr.hgh.est4} and \eqref{t:CNSK.dcy.s0.pr.hgh.est5}, 
we obtain the bound for the high-frequency regime. 
\begin{equation}\label{t:CNSK.dcy.s0.pr.hgh.est.f}
\|\tau^{\alpha} (\Lambda a, m)
	\|_{\widetilde{L^\infty_t} (\dot B^{\frac{d}{2}+1}_{2,1})}^h 
 \le C( \tilde X(t) + (\tilde X(t) + \tilde D(t)) \tilde D(t) ). 
\end{equation}

Now it is clear that \eqref{t:CNSK.dcy.s0:smllsol} follows from 
\eqref{t:CNSK.dcy.s0.pr.low.est.f} and \eqref{t:CNSK.dcy.s0.pr.hgh.est.f}.
This concludes Theorem \ref{t:CNSK.dcy.s0}. 
\end{proof}

\section{Appendix}
\subsection{Calculus facts}
The following inequalities are useful in the proof of decay estimates.
\begin{lem}[\cite{DanXu2017}]\label{l:t-est}
Let $\langle t\rangle:= \sqrt{1+t^2}.$ For any positive real numbers $a,b$, 
there exists a positive constant $C$ such that the 
following inequalities hold:
If $\max(a,b) >1$, then
\begin{equation}\nonumber
\int_{0}^{t} \langle \tau\rangle^{-a} \langle t-\tau\rangle ^{-b}\, d\tau \le C \langle t \rangle^{-\min(a,b)}
\quad\text{for all}\quad t\ge0.
\end{equation}
\end{lem}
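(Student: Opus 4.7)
The plan is to split the integration interval at $\tau = t/2$ and exploit the symmetry of the integrand under $\tau\mapsto t-\tau$. The short-time range $0\le t\le 1$ is handled by a trivial bound: the integrand is uniformly bounded there, so the integral is $\lesssim 1 \lesssim \langle t\rangle^{-\min(a,b)}$, and the remainder of the argument concerns $t\ge 1$.

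For $t\ge 1$, I would write $\int_0^t = \int_0^{t/2}+\int_{t/2}^t =: I_1 + I_2$. On $[0,t/2]$ we have $t-\tau \ge t/2$ and therefore $\langle t-\tau\rangle \gtrsim \langle t\rangle$, so
$$I_1 \lesssim \langle t\rangle^{-b}\int_0^{t/2}\langle\tau\rangle^{-a}\,d\tau.$$
Symmetrically, on $[t/2,t]$ we have $\langle\tau\rangle\gtrsim \langle t\rangle$, and the substitution $s=t-\tau$ yields
$$I_2 \lesssim \langle t\rangle^{-a}\int_0^{t/2}\langle s\rangle^{-b}\,ds.$$
The remaining one-dimensional integrals grow in one of three ways depending on the exponent: they are bounded when the exponent exceeds $1$, they behave like $\log\langle t\rangle$ when the exponent equals $1$, and they behave like $\langle t\rangle^{1-\text{exp}}$ when the exponent is less than $1$.

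Without loss of generality I would relabel so that $b = \max(a,b)>1$, which is precisely where the hypothesis is used. Then the piece $I_2$ already yields $\lesssim \langle t\rangle^{-a} = \langle t\rangle^{-\min(a,b)}$, since $\int_0^{t/2}\langle s\rangle^{-b}\,ds$ is uniformly bounded. For $I_1$ one obtains $\langle t\rangle^{-b}$ multiplied by at most $\langle t\rangle^{\max(0,1-a)}$ (with a logarithmic correction at $a=1$), and the strict inequality $b>1$ supplies a surplus factor $\langle t\rangle^{-(b-1)}$ that absorbs this growth back into the target bound $\langle t\rangle^{-a}$.

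The main (mild) obstacle is the separate verification of the three sub-cases $a>1$, $a=1$, and $a<1$ inside $I_1$: in the critical case one must dominate $\log\langle t\rangle$ by $\langle t\rangle^{b-1}$, and in the sub-critical case one must dominate $\langle t\rangle^{1-a}$ by $\langle t\rangle^{b-1}\cdot \langle t\rangle^{b-a}\cdot\langle t\rangle^{-(b-a)}$, i.e. use $1-a \le (b-1)+(b-a)\cdot 0$ rewritten as $1-a-b\le -a$, which is exactly $b\ge 1$. Both reductions are elementary, but keeping the bookkeeping straight is the only point that requires attention.
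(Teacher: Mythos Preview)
The paper does not supply a proof of this lemma; it is stated in the Appendix with a citation to \cite{DanXu2017} and left unproved. Your argument is the standard one and is correct: split at $t/2$, pull out the factor that is comparable to $\langle t\rangle$ on each half, and use the hypothesis $\max(a,b)>1$ to make one of the remaining one-dimensional integrals uniformly bounded.

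One minor comment: the algebra in your final paragraph is written in a garbled way. For the sub-critical case $a<1$ in $I_1$ you simply need
\[
\langle t\rangle^{-b}\,\langle t\rangle^{1-a} = \langle t\rangle^{1-a-b} \le \langle t\rangle^{-a},
\]
which is equivalent to $1-b\le 0$, and this is exactly $b\ge 1$ (you have $b>1$). There is no need for the intermediate expression ``$1-a \le (b-1)+(b-a)\cdot 0$''; state the inequality directly and the case is one line. Likewise in the critical case $a=1$ the bound $\log\langle t\rangle \lesssim \langle t\rangle^{b-1}$ is immediate from $b>1$. With that cleanup the proof is complete.
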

Recall that we have $\langle t\rangle \simeq (1+t) \simeq \max(1,t).$ 
In practical situation, we choose either of the above equivalent quantities 
that fit our convenience. 

The following is well-known. 
\begin{lem}[\cite{BCD2011} P73]\label{lem:unifbd}
Let $c_0$ be a positive real number. 
For any positive real number $r$, 
there exists a positive constant $C(r)$ depending on $r$ such that 
\begin{equation}
\nonumber
\sup_{t\ge0} \sum_{k\in\Z} ( 2^{k}t^{\frac{1}{2}} ) ^{r} e^{-c_0 2^{2k}t} \le C(r).
\end{equation}
\end{lem}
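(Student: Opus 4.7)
The plan is to reduce the supremum over $t\ge 0$ of a doubly-infinite sum to bounding a simple profile evaluated on a geometric progression, and then split the sum at a $t$-dependent index across which the exponential switches from $\approx 1$ to being dominant.

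First, I would dispose of the trivial case $t=0$: since $r>0$, each factor $(2^k t^{1/2})^r$ vanishes identically, so the sum is $0$. For $t>0$, I would introduce $s_k:=2^k t^{1/2}$, so $s_{k+1}=2 s_k$, and recast the quantity as $\sum_{k\in\Z} s_k^r e^{-c_0 s_k^2}$. The key auxiliary observation is that $\{s_k\}_{k\in\Z}$ is a doubly-infinite geometric sequence with ratio $2$, merely shifted by a $t$-dependent amount; pinning the shift at a well-chosen reference point should yield a bound independent of $t$.

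Concretely, I would define $k_0=k_0(t)\in\Z$ to be the smallest integer with $s_{k_0}\ge 1$; this forces $s_{k_0}\in[1,2)$, a $t$-independent interval, and lets me write $s_k=2^{k-k_0}s_{k_0}$ uniformly in $k$. I would then split the sum at $k_0$. For the low tail $k<k_0$, $s_k<1$ so $e^{-c_0 s_k^2}\le 1$, and $s_k^r\le 2^r\cdot 2^{-r(k_0-k)}$, giving $\sum_{k<k_0} s_k^r e^{-c_0 s_k^2}\le 2^r\sum_{j\ge 1}2^{-rj}=2^r/(2^r-1)$. For the high tail $k\ge k_0$, setting $j=k-k_0\ge 0$, I use $s_k^r e^{-c_0 s_k^2}\le 2^r\cdot 2^{rj} e^{-c_0\,2^{2j}}$, and the resulting series $\sum_{j\ge 0}2^{rj} e^{-c_0\,2^{2j}}$ converges super-exponentially, uniformly in $t$.

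Summing the two bounds produces a constant $C=C(r,c_0)$ independent of $t$, which proves the claim. The only nontrivial point is verifying that both tails are genuinely $t$-independent; this follows from the fact that the definition of $k_0$ pins $s_{k_0}$ in the fixed interval $[1,2)$, so after factoring $s_k=2^{k-k_0}s_{k_0}$ the two sums depend only on $r$ and $c_0$. I do not expect a significant obstacle beyond this bookkeeping.
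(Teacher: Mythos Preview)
Your argument is correct. The substitution $s_k=2^k t^{1/2}$ reduces the problem to bounding $\sum_{k\in\Z} s_k^r e^{-c_0 s_k^2}$ along a geometric progression of ratio $2$, and your anchoring of $s_{k_0}\in[1,2)$ renders both tails manifestly independent of $t$: the low tail is a geometric series in $2^{-r}$, and the high tail is dominated by the fixed convergent series $\sum_{j\ge0}2^{rj}e^{-c_0 2^{2j}}$. The resulting bound depends on $r$ and $c_0$, as you note (the statement's ``$C(r)$'' tacitly allows dependence on the fixed $c_0$).

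As for comparison with the paper: there is nothing to compare against. The paper does not supply a proof of this lemma; it is quoted directly from \cite{BCD2011}, page~73, as a known fact. Your self-contained argument is exactly the sort of elementary proof one would expect for this estimate, and it stands on its own.
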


\subsection{Praproduct estimates}
We introduce the notation of Bony's paradifferential calculus.
Recall Definition \ref{def:h-LP-proj} and the operators
\begin{equation}\nonumber
\dot \Delta_j u = \mathcal{F}^{-1}[\widehat\phi_j \widehat u], \quad
\widetilde{\dot\Delta}_j u = \sum_{|j-k|\le2} \dot\Delta_k u 
  \quad\text{and}\quad
\dot S_j u =\sum_{j'\le j} \Delta_{j'} u 
\quad\text{for}\quad j\in\Z.
\end{equation}
Given $u, v \in \mathcal{S}'(\R^d)$ 
we have a formal decomposition of the pointwise product $uv$:
\begin{equation}\nonumber
uv = \sum_{j\in\Z} \dot S_{j-3} u \ \dot\Delta_j v
     +\sum_{j\in\Z} \dot S_{j-3} v \ \dot\Delta_j u
     +\sum_{j\in\Z} \widetilde{\dot\Delta}_j u \dot\Delta_{j} v \\
   =: \dot T_u v + \dot T_v u + \dot R(u,v).
\end{equation}
One may easily verify that, with our choice of $\phi,$ the following relations hold:
\begin{equation}\label{supp.property}
  \dot\Delta_j \dot\Delta_k u=0 
     \quad \text{if} \quad |k-j| \ge 3 
\quad \text{and } \quad
  \dot\Delta_k (\dot S_{j-3} u \dot\Delta_j u)=0 
     \quad \text{if} \quad |k-j| \ge 4. 
\end{equation}
Property \eqref{supp.property} readily implies
\begin{equation}\nonumber
\dot\Delta_\ell (\dot T_u v) 
=\sum_{|j-\ell|\le3}\dot\Delta_\ell( \dot S_{j-3} u \ \dot\Delta_j v)
\quad \text{and} \quad
\dot\Delta_\ell (\dot R(u,v)) 
=\sum_{j\ge\ell-4} \dot\Delta_\ell
  (\dot\Delta_j u \widetilde{\dot\Delta}_{j} v).
\end{equation}
For the details of the paradifferential calculus, we refer to 
Chapter 2 of \cite{BCD2011}. 
The up-to-date homogeneous paraproduct estimates are the following. 
\begin{lem}[\cite{AbPa2007}]
\label{l:offdiag}
Let $s_1,s_2 \in \R,$ $1\le p, p_1, p_2 \le\infty,$ $1\le \sigma, \sigma_1,\sigma_2\le\infty$
with $\frac{1}{\sigma}=\frac{1}{\sigma_1}+\frac{1}{\sigma_2}$ and further assume that  
$\frac{1}{p} \le \frac{1}{p_2} + \frac{1}{\lambda} \le 1,$ $1\le \lambda \le \infty$ with $p_1 \le \lambda.$ 
Then there exists a positive constant $C$ 
depending on $d,$ $p,$ $p_1,$ $p_2,$ $s_1,$ $s_2,$ $\sigma_1$ and $\sigma_2,$ such that we have 
\begin{equation}\nonumber
\|\dot T_u v\|_{\dot B^{s_1+s_2+\frac{d}{p}-\frac{d}{p_1}-\frac{d}{p_2}}_{p,\sigma}}
\le C \left\{
	\begin{aligned}
	&\|u\|_{\dot B^{s_1}_{p_1,\sigma_1}}\|v\|_{\dot B^{s_2}_{p_2,\sigma_2}}
		&& \text{if} \quad s_1 + \frac{d}{\lambda}< \frac{d}{p_1}, \\
	&\|u\|_{\dot B^{s_1}_{p_1,1}}\|v\|_{\dot B^{s_2}_{p_2,\sigma}}
		&& \text{if} \quad s_1 + \frac{d}{\lambda}= \frac{d}{p_1}. \\
	\end{aligned}
	\right.
\end{equation}
\end{lem}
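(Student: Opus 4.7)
The plan is to prove the paraproduct estimate frequency by frequency, exploiting the spectral localization of $\dot T_u v$ and then summing dyadically. By property \eqref{supp.property},
\begin{equation*}
\dot\Delta_\ell(\dot T_u v) = \sum_{|j-\ell|\le 3} \dot\Delta_\ell\bigl(\dot S_{j-3}u\,\dot\Delta_j v\bigr),
\end{equation*}
so we are reduced to bounding $\|\dot S_{j-3}u\,\dot\Delta_j v\|_{L^p}$ for each such $j$. First, I would apply H\"older's inequality with the intermediate exponent $q$ defined by $\tfrac{1}{q}=\tfrac{1}{\lambda}+\tfrac{1}{p_2}$ (the condition $\tfrac{1}{p_2}+\tfrac{1}{\lambda}\le 1$ ensures $q\ge 1$), and then Bernstein's inequality (justified since the product has spectrum of size $\sim 2^\ell$ and $q\le p$ by assumption) to pass from $L^q$ to $L^p$ at the cost of the factor $2^{\ell d(1/q-1/p)}$.

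The heart of the matter is to control $\|\dot S_{j-3}u\|_{L^\lambda}=\bigl\|\sum_{k\le j-3}\dot\Delta_k u\bigr\|_{L^\lambda}$. Applying Bernstein once more (this requires $p_1\le\lambda$), I would rewrite
\begin{equation*}
\|\dot S_{j-3}u\|_{L^\lambda}\lesssim \sum_{k\le j-3} 2^{k(d/p_1-d/\lambda-s_1)}\bigl(2^{ks_1}\|\dot\Delta_k u\|_{L^{p_1}}\bigr).
\end{equation*}
In the non-endpoint case $s_1+d/\lambda<d/p_1$, the exponent of $2^k$ is strictly positive, so Young's convolution inequality for sequences yields
\begin{equation*}
\|\dot S_{j-3}u\|_{L^\lambda}\lesssim 2^{j(d/p_1-d/\lambda-s_1)}\,\gamma_j\,\|u\|_{\dot B^{s_1}_{p_1,\sigma_1}},
\end{equation*}
for some nonnegative sequence $(\gamma_j)_{j\in\Z}$ of unit $\ell^{\sigma_1}$ norm. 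Combining this with $\|\dot\Delta_j v\|_{L^{p_2}}=2^{-js_2}\delta_j\|v\|_{\dot B^{s_2}_{p_2,\sigma_2}}$ where $(\delta_j)\in\ell^{\sigma_2}$, and using $|j-\ell|\le 3$ to replace $2^j$ by $2^\ell$, a direct arithmetic check shows that the total exponent of $2^\ell$ is exactly $-(s_1+s_2+d/p-d/p_1-d/p_2)$. Taking the $\ell^\sigma$ norm in $\ell$ and applying H\"older with $\sigma^{-1}=\sigma_1^{-1}+\sigma_2^{-1}$ to the product $(\gamma_\ell\delta_\ell)$ then delivers the desired bound.

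The main obstacle is the endpoint case $s_1+d/\lambda=d/p_1$, where the exponent of $2^k$ vanishes and the convolution argument above fails: one can no longer extract a weighted $\ell^{\sigma_1}$ factor at scale $2^j$. To overcome this, I would instead bound the sum uniformly in $j$:
\begin{equation*}
\|\dot S_{j-3}u\|_{L^\lambda}\lesssim \sum_{k\le j-3} 2^{ks_1}\|\dot\Delta_k u\|_{L^{p_1}}\le \|u\|_{\dot B^{s_1}_{p_1,1}}.
\end{equation*}
This forces the summability index $\sigma_1=1$ on the right-hand side but recovers a clean, $j$-independent bound. Since no $\ell$-summability is spent on the $u$ factor, the final $\ell^\sigma$ summation inherits its index entirely from the $v$ factor, yielding $\sigma=\sigma_2$ and matching the second case of the statement.
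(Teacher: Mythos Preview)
The paper does not supply its own proof of this lemma; it is quoted in the Appendix with a citation to Abidi--Paicu \cite{AbPa2007} and used as a black box. Your argument is the standard one and is correct: frequency-localize via \eqref{supp.property}, use Bernstein on $\dot\Delta_\ell$ to pass from $L^q$ to $L^p$ (with $1/q=1/\lambda+1/p_2$), split the product by H\"older, apply Bernstein again on each $\dot\Delta_k u$ (this is where $p_1\le\lambda$ enters), and then either run the discrete Young/convolution argument when $d/p_1-d/\lambda-s_1>0$ or take the uniform $\ell^1$ bound at the endpoint. The exponent bookkeeping you indicate is correct, and the summability indices match the two cases in the statement.
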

\begin{lem}[\cite{AbPa2007}]\label{l:diag}
Let $s_1,s_2 \in \R,$ $1\le p_1, p_2 \le \infty$, $1\le\sigma,\sigma_1,\sigma_2\le\infty$, $\frac{1}{\sigma}=\frac{1}{\sigma_1}+\frac{1}{\sigma_2}$, $\frac{1}{p}\le\frac{1}{p_1}+\frac{1}{p_2}$ and 
$$s_1+s_2+d\inf\left(0,1-\frac{1}{p_1}-\frac{1}{p_2}\right)>0.$$ 
Then there exists a constant $C>0$ depending on 
$d,p, p_1, p_2,\sigma,\sigma_1$ and $\sigma_2$ such that 
\begin{equation*}
\|\dot R(u,v)\|_{\dot B^{s_1+s_2+\frac{d}{p}-\frac{d}{p_1}-\frac{d}{p_2}}_{p,\sigma}}
\le C\|u\|_{\dot B^{s_1}_{p_1,\sigma_1}}
     \|v\|_{\dot B^{s_2}_{p_2,\sigma_1}}
\end{equation*}
Let furthermore $\frac{1}{p}\le\frac{1}{p_1}+\frac{1}{p_2}\le1$ and $s_1+s_2=0$. 
Then there exists a constant $C>0$ depending on 
$d,$ $p,$ $p_1,$ $p_2$ $s_1$ and $s_2$ such that 
\begin{equation*}
\|\dot R(u,v)\|_{\dot B^{\frac{d}{p}-\frac{d}{p_1}-\frac{d}{p_2}}_{p,\infty}}
\le C\|u\|_{\dot B^{s_1}_{p_1,1}}
     \|v\|_{\dot B^{s_2}_{p_2,\infty}}.
\end{equation*}
\end{lem}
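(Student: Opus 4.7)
The plan is a dyadic block-by-block analysis exploiting the key spectral property of the remainder: the Fourier support of each summand $\dot\Delta_j u\,\widetilde{\dot\Delta}_j v$ is contained in a ball of radius $\sim 2^j$, rather than in an annulus. This implies that $\dot\Delta_\ell$ annihilates all contributions with $j\le \ell - N_0$ for an absolute $N_0$, so that
$$
\dot\Delta_\ell \dot R(u,v) = \sum_{j\ge \ell - N_0} \dot\Delta_\ell\bigl(\dot\Delta_j u \, \widetilde{\dot\Delta}_j v\bigr).
$$

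For the generic case, I would first use H\"older's inequality with $\frac{1}{\tilde p} := \frac{1}{p_1}+\frac{1}{p_2}$ and then a ball-version of Bernstein's inequality (applicable since $\tilde p \le p$ by hypothesis) to write
$$
\|\dot\Delta_\ell \dot R(u,v)\|_{L^p} \lesssim \sum_{j\ge \ell - N_0} 2^{dj(1/\tilde p - 1/p)}\|\dot\Delta_j u\|_{L^{p_1}}\|\widetilde{\dot\Delta}_j v\|_{L^{p_2}}.
$$
Multiplying by $2^{\ell s}$ with $s := s_1+s_2+\frac{d}{p}-\frac{d}{p_1}-\frac{d}{p_2}$ and relabeling with $k := j-\ell+N_0\ge 0$, the weight becomes $2^{-k(s_1+s_2+d\min(0,1-\frac{1}{p_1}-\frac{1}{p_2}))}$ times dyadic factors on the individual blocks. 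The sign condition in the hypothesis makes this kernel geometrically decaying, and a discrete Young inequality $\ell^{\sigma_1}*\ell^{\sigma_2}\hookrightarrow\ell^{\sigma}$ closes the $\dot B^{\bullet}_{p,\sigma}$ estimate.

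For the endpoint $s_1+s_2=0$, the geometric decay is lost, so I would descend to the weakest third index $\sigma=\infty$. The idea is to estimate $\|\dot\Delta_\ell \dot R(u,v)\|_{L^p}$ via an $\ell^1(\Z)\cdot\ell^\infty(\Z)$ pairing of the sequences $\{2^{js_1}\|\dot\Delta_j u\|_{L^{p_1}}\}_j$ and $\{2^{js_2}\|\widetilde{\dot\Delta}_j v\|_{L^{p_2}}\}_j$: the stronger $\dot B^{s_1}_{p_1,1}$ norm supplies absolute convergence on the $u$-side, while $\dot B^{s_2}_{p_2,\infty}$ yields only uniform-in-$j$ control on the $v$-side. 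The resulting bound is uniform in $\ell$, which produces the $\dot B^{\bullet}_{p,\infty}$ target norm.

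The principal obstacle will be the careful bookkeeping of spectral supports and the choice of Bernstein exponents so that the shift $\frac{d}{p}-\frac{d}{p_1}-\frac{d}{p_2}$ is produced cleanly in the mismatched regime $\frac{1}{p_1}+\frac{1}{p_2}\neq\frac{1}{p}$; otherwise the argument is standard and follows the template of Bony's decomposition as presented in \cite{BCD2011}, in the refined form of \cite{AbPa2007}.
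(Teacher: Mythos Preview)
Your proposal is correct and follows the standard route for remainder estimates in Bony's calculus. Note, however, that the paper does not supply its own proof of this lemma: it is quoted from \cite{AbPa2007} and stated without argument in the Appendix, so there is nothing to compare against beyond the literature itself.

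One small point of bookkeeping worth tightening: in the regime $\frac{1}{p_1}+\frac{1}{p_2}>1$ your intermediate exponent $\tilde p$ drops below $1$, and the convolution bound $\|\dot\Delta_\ell f\|_{L^{\tilde p}}\lesssim\|f\|_{L^{\tilde p}}$ is no longer available via Young. The usual fix is either to invoke the quasi-Banach Bernstein inequality directly on the product (which is supported in a ball of radius $\sim 2^j$) to pass from $L^{\tilde p}$ to $L^p$ before applying $\dot\Delta_\ell$, or to first Bernstein one factor up to $L^{p_2'}$ so that H\"older lands in $L^1$. Either route produces the kernel $2^{-k(s_1+s_2+d(1-\frac{1}{p_1}-\frac{1}{p_2}))}$ in this case, matching the $\min$ in the hypothesis. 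In the complementary regime $\frac{1}{p_1}+\frac{1}{p_2}\le 1$ you should apply Bernstein at scale $2^\ell$ rather than $2^j$ (since $\dot\Delta_\ell$ localizes to the annulus $|\xi|\sim 2^\ell$), which yields the kernel $2^{-k(s_1+s_2)}$; using $2^j$ there would give the wrong decay exponent. With these two cases separated, your Young-in-$\ell^\sigma$ conclusion and the $\ell^1\cdot\ell^\infty$ endpoint argument go through exactly as you describe.
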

Lemma \ref{l:offdiag} and Lemma \ref{l:diag} immediately lead to 
the following product estimates, which are used throughout the paper. 
\begin{lem}\label{lem:prd.p=2}
Let $d\ge1$, $s_1, s_2 \le \frac{d}{2}$ and
$s_1+s_2> 0.$ 
Then it holds that 
\begin{equation*}
\|uv\|_{\dot B^{s_1+s_2-\frac{d}{2}}_{2,1}}
\le C \|u\|_{\dot B^{s_1}_{2,1}}\|v\|_{\dot B^{s_2}_{2,1}}.
\end{equation*}
\end{lem}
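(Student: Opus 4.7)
The plan is to invoke Bony's paraproduct decomposition
\[
uv = \dot T_u v + \dot T_v u + \dot R(u,v)
\]
and bound each of the three pieces separately by means of Lemma \ref{l:offdiag} and Lemma \ref{l:diag}, with the choice $p=p_1=p_2=2$ throughout. With these exponents, the target index $s_1+s_2-\frac{d}{2}$ matches the generic output regularity $s_1+s_2+\frac{d}{p}-\frac{d}{p_1}-\frac{d}{p_2}$ of both lemmas, so no additional gain or loss of derivatives is needed.

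First I would handle the paraproducts. For $\dot T_u v$ I apply Lemma \ref{l:offdiag} with $\sigma=\sigma_2=1$ and $\sigma_1=\infty$, and I pick the auxiliary exponent $\lambda\in[2,\infty]$ according to the position of $s_1$ relative to $\frac{d}{2}$: if $s_1<\frac{d}{2}$ I take $\lambda$ large enough that $s_1+\frac{d}{\lambda}<\frac{d}{2}$ and use the first (strict) case of the lemma; if $s_1=\frac{d}{2}$ I take $\lambda=\infty$ and invoke the endpoint case, which is why the summation index $\sigma=1$ must be carried by $v$. This gives
\[
\|\dot T_u v\|_{\dot B^{s_1+s_2-\frac{d}{2}}_{2,1}}\lesssim \|u\|_{\dot B^{s_1}_{2,1}}\|v\|_{\dot B^{s_2}_{2,1}}.
\]
The estimate for $\dot T_v u$ is obtained by symmetry, using $s_2\le\frac{d}{2}$.

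For the remainder I apply Lemma \ref{l:diag} with $p=p_1=p_2=2$. The structural condition $s_1+s_2+d\inf\bigl(0,\,1-\tfrac{1}{p_1}-\tfrac{1}{p_2}\bigr)>0$ reduces precisely to the hypothesis $s_1+s_2>0$ (since $\inf(0,0)=0$), so the lemma yields
\[
\|\dot R(u,v)\|_{\dot B^{s_1+s_2-\frac{d}{2}}_{2,1}}\lesssim \|u\|_{\dot B^{s_1}_{2,1}}\|v\|_{\dot B^{s_2}_{2,1}}.
\]
Summing the three contributions completes the proof.

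The only genuine subtlety will be the endpoint cases $s_1=\frac{d}{2}$ or $s_2=\frac{d}{2}$ in the paraproduct estimates, where the first (strict) branch of Lemma \ref{l:offdiag} fails and one must pass to its limiting branch; this is exactly why the hypothesis is stated with non-strict inequalities $s_i\le \frac{d}{2}$ and why the third index $\sigma=1$ is essential. Everything else is a mechanical application of the two preceding lemmas, so I do not expect any further obstacle.
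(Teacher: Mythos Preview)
Your proposal is correct and is exactly the argument the paper has in mind: the paper does not spell out a proof of Lemma \ref{lem:prd.p=2} at all, but simply states that it (together with Lemma \ref{l:prd.g.lmt.p2}) follows immediately from Lemma \ref{l:offdiag} and Lemma \ref{l:diag}. Your Bony decomposition with $p=p_1=p_2=2$, the handling of the endpoint $s_i=\frac{d}{2}$ via the limiting branch of Lemma \ref{l:offdiag}, and the observation that the remainder condition in Lemma \ref{l:diag} reduces to $s_1+s_2>0$ are precisely the intended details.
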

For the limiting case $s_1+s_2=0,$ we have the following. 
\begin{lem}\label{l:prd.g.lmt.p2}
Let $s_1,s_2 \in \R$, $s_1 \le \frac{d}{2}$, $s_2 \le \frac{d}{2}$ and $s_1+s_2\ge0.$ 
Then there exists a constant $C>0$ depending on 
$d,$ $s_1$ and $s_2$ such that 
\begin{equation}\nonumber
\| u v \|_{\dot B^{s_1+s_2-\frac{d}{2}}_{2,\infty}}
\le C\|u\|_{\dot B^{s_1}_{2,1}}    \|v\|_{\dot B^{s_2}_{2,1}}.
\end{equation}
\end{lem}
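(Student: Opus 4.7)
The plan is to invoke Bony's paraproduct decomposition
\[
uv = \dot T_u v + \dot T_v u + \dot R(u,v)
\]
and estimate each piece separately using Lemma \ref{l:offdiag} and Lemma \ref{l:diag}. The target space is $\dot B^{s_1+s_2-\frac{d}{2}}_{2,\infty}$, which is strictly weaker than $\dot B^{s_1+s_2-\frac{d}{2}}_{2,1}$; this slackness is precisely what must be spent on the remainder in the endpoint $s_1+s_2=0$.

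For the paraproducts, I would apply Lemma \ref{l:offdiag} with $p=p_1=p_2=2$, which places $\dot T_u v$ and $\dot T_v u$ into $\dot B^{s_1+s_2-\frac{d}{2}}_{2,1}$. When $s_1<\frac{d}{2}$ one may take $\sigma_1=\infty$ and $\sigma_2=1$ (so $\sigma=1$) with $\lambda=\infty$, since the strict inequality $s_1<\frac{d}{p_1}$ holds; when $s_1=\frac{d}{2}$ one falls in the equality case and must switch to $\sigma_1=1$, $\sigma_2=\infty$. Either way $\|\dot T_u v\|_{\dot B^{s_1+s_2-d/2}_{2,1}} \lesssim \|u\|_{\dot B^{s_1}_{2,1}}\|v\|_{\dot B^{s_2}_{2,1}}$, using $\dot B^{s}_{2,1}\hookrightarrow \dot B^{s}_{2,\infty}$ to absorb the non-optimal index. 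The same argument, with the roles of $u$ and $v$ swapped, handles $\dot T_v u$.

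For the remainder I would split into two cases according to the sign of $s_1+s_2$. If $s_1+s_2>0$, then since $\frac{1}{p_1}+\frac{1}{p_2}=1$, the quantity $s_1+s_2+d\inf(0,1-\frac{1}{p_1}-\frac{1}{p_2})=s_1+s_2$ is positive, so the first part of Lemma \ref{l:diag} (with $\sigma_1=\sigma_2=1$, hence $\sigma=1$) gives $\dot R(u,v)\in \dot B^{s_1+s_2-\frac{d}{2}}_{2,1}$ with the expected bound, and we embed into $\dot B^{s_1+s_2-\frac{d}{2}}_{2,\infty}$. If on the other hand $s_1+s_2=0$, the first part of Lemma \ref{l:diag} fails and we must resort to the second (limiting) part, which yields
\[
\|\dot R(u,v)\|_{\dot B^{-d/2}_{2,\infty}} \le C\|u\|_{\dot B^{s_1}_{2,1}}\|v\|_{\dot B^{s_2}_{2,\infty}} \le C\|u\|_{\dot B^{s_1}_{2,1}}\|v\|_{\dot B^{s_2}_{2,1}},
\]
which is exactly the endpoint bound we want. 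Summing the three pieces yields the lemma.

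The main obstacle (and the whole point of weakening $\sigma=1$ to $\sigma=\infty$ on the target) is the endpoint case $s_1+s_2=0$ of the remainder $\dot R(u,v)$: the standard $\ell^1$-summation of dyadic annuli for $\dot R$ requires a positive regularity gain, and at the critical value $0$ one can only recover an $\ell^\infty$ bound, which is why Lemma \ref{l:prd.g.lmt.p2} is necessarily stated with target $\dot B^{s_1+s_2-\frac{d}{2}}_{2,\infty}$ rather than the sharper $\dot B^{s_1+s_2-\frac{d}{2}}_{2,1}$ available in Lemma \ref{lem:prd.p=2} under $s_1+s_2>0$.
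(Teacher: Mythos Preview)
Your approach is correct and matches the paper's: the paper states (without details) that the lemma follows immediately from Lemmas~\ref{l:offdiag} and~\ref{l:diag}, and your Bony decomposition with the case split at $s_1+s_2=0$ supplies precisely those details. One cosmetic slip: in the non-endpoint remainder case the choice $\sigma_1=\sigma_2=1$ violates the constraint $\tfrac{1}{\sigma}=\tfrac{1}{\sigma_1}+\tfrac{1}{\sigma_2}$ as written in Lemma~\ref{l:diag}; take $\sigma_1=\sigma_2=2$ (or $\sigma_1=1$, $\sigma_2=\infty$) instead and then embed.
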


We recall a standard composition estimate in Besov spaces. 
\begin{lem}[\cite{DanXu2017}]
\label{l:comp}
Let $F:I\to \R$ be a smooth function 
(with $I$ an open interval of $\R$ containing 0) 
vanishing at 0. Then for any $s>0$, $1\le p\le\infty$ 
and interval $J$ compactly supported 
in $I$ there exists a constant $C$ such that 
\begin{equation*}
\|F(a)\|_{\dot B^{s}_{p,1}}
\le C \|a\|_{\dot B^{s}_{p,1}}
\end{equation*}
for any $a\in\dot B^{s}_{p,1}(\R^d)$ with values in $J$. 
\smallbreak
In the case $s>-\min(\frac{d}{p},\frac{d}{p'})$ if in addition to the above hypotheses
we have $a\in\dot B^{\frac dp}_{p,1}(\R^d)$ then 
$F(a)\in \dot B^{\frac dp}_{p,1}(\R^d)\cap\dot B^{s}_{p,1}(\R^d)$ and
$$
\|F(a)\|_{\dot B^{s}_{p,1}}\le C \|a\|_{\dot B^{s}_{p,1}}\bigl(|F'(0)|+C\|a\|_{\dot B^{\frac dp}_{p,1}}\bigr).
$$
\end{lem}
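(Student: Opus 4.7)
The plan is to establish both statements by Meyer's first linearization combined with Bony's paradifferential calculus, in the standard manner of \cite{BCD2011}.

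For the first inequality, the hypothesis that $a$ takes values in the compact subinterval $J \Subset I$ gives $a\in L^\infty$, and the truncations $\dot S_j a$ take values in a slightly enlarged compact subinterval $\widetilde J\subset I$. Since $F(0)=0$ and $s>0$ guarantees that $\dot S_j a \to 0$ in $\mathcal{S}'_h$ as $j\to-\infty$, the telescoping identity
\begin{equation*}
F(a)=\sum_{j\in\Z}\bigl(F(\dot S_{j+1}a)-F(\dot S_j a)\bigr)=\sum_{j\in\Z} m_j\,\dot\Delta_j a,\qquad
m_j:=\int_0^1 F'(\dot S_j a+\theta\,\dot\Delta_j a)\,d\theta,
\end{equation*}
converges and satisfies $\|m_j\|_{L^\infty}\le M:=\sup_{\widetilde J}|F'|$ uniformly in $j$. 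I would then bound $\|\dot\Delta_k F(a)\|_{L^p}$ by splitting $m_j=\dot S_{j-3}m_j+(\id-\dot S_{j-3})m_j$: the first piece produces a paraproduct-type term whose spectral support enforces $|k-j|\le 4$ and is controlled pointwise by $M\,\|\dot\Delta_j a\|_{L^p}$, while the second piece inherits frequency-localised smoothness from Bernstein bounds on $\nabla(\dot S_j a)$, yielding a summable decay in $|k-j|$. Weighting by $2^{ks}$ and applying Young's inequality in $\ell^1(\Z)$ (with $s>0$ closing the sum over $k\ge j$) gives the claim.

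For the second inequality, the extra hypothesis $a\in\dot B^{d/p}_{p,1}$ together with the embedding $\dot B^{d/p}_{p,1}\hookrightarrow L^\infty$ again provides the $L^\infty$ control required for Meyer's decomposition. The central idea is to apply Bony's paralinearization
\begin{equation*}
F(a)=F'(0)\,a+\dot T_{F'(a)-F'(0)}\,a+\dot R(F,a)
\end{equation*}
and bound the three pieces separately. The linear term is controlled by $|F'(0)|\,\|a\|_{\dot B^s_{p,1}}$ trivially. The paraproduct $\dot T_{F'(a)-F'(0)}\,a$ is handled by Lemma~\ref{l:offdiag} once one knows that $F'(a)-F'(0)\in\dot B^{d/p}_{p,1}$; this in turn follows by invoking the first part of the lemma applied to the smooth function $G(x):=F'(x)-F'(0)$, which vanishes at $0$. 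The remainder $\dot R(F,a)$ is estimated through a second round of Meyer's trick combined with Lemma~\ref{l:diag}, producing a contribution in $\dot B^{s+d/p}_{p,1}\hookrightarrow\dot B^s_{p,1}$ carrying a factor of $\|a\|_{\dot B^{d/p}_{p,1}}$.

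The main obstacle will be verifying the absolute convergence of the various Meyer series at the negative endpoint $s>-\min(d/p,d/p')$: this is precisely the threshold at which Lemma~\ref{l:offdiag} and Lemma~\ref{l:diag} apply without logarithmic loss, but the bookkeeping of the indices is delicate because one must simultaneously track how $m_j$ depends on the low-frequency mollification of $a$ and how this interacts with the high-frequency block $\dot\Delta_j a$. A minor but necessary preliminary is to check that $F(a)\in\mathcal{S}'_h(\R^d)$ so that the homogeneous Besov norm is well defined; this follows from the telescoping identity itself together with the $L^\infty$ bound on $a$.
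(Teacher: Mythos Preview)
The paper does not prove this lemma; it is stated in the Appendix as a result quoted from \cite{DanXu2017} (see also \cite{BCD2011}, Theorems~2.61 and~2.87). There is therefore no paper-proof to compare your proposal against.

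That said, your approach via Meyer's first linearization is the standard one and is sound in outline. Two remarks. For the first inequality, a single derivative of $m_j$ yields a gain $2^{j-k}$ in the piece $(\id-\dot S_{j-3})m_j\,\dot\Delta_j a$, which closes the weighted $\ell^1$ sum only for $0<s<1$; for larger $s$ you must iterate the Bernstein bound on higher derivatives of $m_j$ (each one buying an extra factor $2^{j-k}$), a point your sketch does not mention. For the second inequality, your paralinearization route will work but is heavier than necessary and leaves the estimate of the remainder $R(F,a)$ somewhat vague. Once the first part is established, the second follows in one line: write $F(a)=F'(0)\,a+a\,G(a)$ with $G(x)=x\int_0^1(1-\theta)F''(\theta x)\,d\theta$ smooth and vanishing at $0$, and apply the product law $\|a\,G(a)\|_{\dot B^s_{p,1}}\lesssim\|G(a)\|_{\dot B^{d/p}_{p,1}}\|a\|_{\dot B^s_{p,1}}$ (a direct consequence of Lemmas~\ref{l:offdiag} and~\ref{l:diag}, valid precisely for $s>-\min(d/p,d/p')$) together with the first part applied to $G$ at regularity $d/p$. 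This is the argument used in the cited references; it makes the threshold $s>-\min(d/p,d/p')$ appear transparently as the condition on the Bony remainder $\dot R(a,G(a))$ and avoids any bookkeeping for a paralinearization remainder.
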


\bigbreak
\centerline{\bf Ackowledgement}\smallbreak
The authors are indebted to Professor H. Abels for his valuable comments. 
The first author is supported by JSPS 
Grant-in-Aid for Young Scientists (B) 17K14216 
and Grant-in-Aid for JSPS Research Fellow 18J00557. 
The second author is supported by JSPS Grant-in-Aid for Scientific Research (C) 18K03368 and (B) 16H03945. 

\begin{bibdiv}
 \begin{biblist}[\normalsize]

\bib{AbPa2007}{article}{
   author={H., Abidi\text{,}},
   author={M., Paicu\text{,}},
   title={Existence globale pour un fluide inhomog\`ene},
   journal={Ann. Inst. Fourier (Grenoble)},
   volume={57},
   date={2007},
   pages={883--917},
}

\bib{BCD2011}{book}{
   author={H., Bahouri\text{,}},
   author={J.-Y., Chemin\text{,}},
   author={R., Danchin\text{,}},
   title={Fourier analysis and nonlinear partial differential equations},
   series={Grundlehren der Mathematischen Wissenschaften},
   volume={343},
   publisher={Springer},
   place={Heidelberg},
   date={2011},
   pages={xvi+523},
}

\bib{ChDa2010}{article}{
   author={F., Charve\text{,}},
   author={R., Danchin\text{,}},
   title={A global existence result for the compressible Navier-Stokes
   equations in the critical $L\sp p$ framework},
   journal={Arch. Ration. Mech. Anal.},
   volume={198},
   date={2010},
   number={1},
   pages={233--271},
}

\bib{ChaHas2011}{article}{
   author={F., Charve\text{,}},
   author={B., Haspot\text{,}},
   title={Convergence of capillary fluid models: from the non-local to the
   local Korteweg model},
   journal={Indiana Univ. Math. J.},
   volume={60},
   date={2011},
   number={6},
   pages={2021--2059},
}

\bib{ChaHas2013}{article}{
   author={F., Charve\text{,}},
   author={B., Haspot\text{,}},
   title={On a Lagrangian method for the convergence from a non-local to a
   local Korteweg capillary fluid model},
   journal={J. Funct. Anal.},
   volume={265},
   date={2013},
   number={7},
   pages={1264--1323},
}

\bib{Cha2014}{article}{
   author={F., Charve\text{,}},
   title={Local in time results for local and non-local capillary
   Navier-Stokes systems with large data},
   journal={J. Differential Equations},
   volume={256},
   date={2014},
   number={7},
   pages={2152--2193},
   issn={0022-0396},
}

\bib{ChDaXu2018}{article}{
   author={F., Charve\text{,}},
   author={R., Danchin\text{,}},
   author={J., Xu\text{,}},
   title={Gevrey analyticity and decay for the compressible Navier-Stokes system with capillarity},
   journal={arXiv:1805.01764v1},
}

\bib{CheGal2009}{article}{
   author={J.-Y., Chemin\text{,}},
   author={I., Gallagher\text{,}},
   title={Wellposedness and stability results for the Navier-Stokes
   equations in ${\bf R}\sp 3$},
   journal={Ann. Inst. H. Poincar\'e Anal. Non Lin\'eaire},
   volume={26},
   date={2009},
   number={2},
   pages={599--624},
}

\bib{ChLe95}{article}{
   author={J.-Y., Chemin\text{,}},
   author={N., Lerner\text{,}},
   title={Flot de champs de vecteurs non lipschitziens et \'equations de
   Navier-Stokes},
   journal={J. Differential Equations},
   volume={121},
   date={1995},
   pages={314--328},
}

\bib{Da2015}{article}{
   author={R., Danchin\text{,}},
   title={Fourier analysis methods for the compressible Navier-Stokes equations, arXiv:1507.02637},
}

\bib{DanDes2001}{article}{
   author={R., Danchin\text{,}},
   author={B., Desjardins\text{,}},
   title={Existence of solutions for compressible fluid models of Korteweg
   type},
   language={English, with English and French summaries},
   journal={Ann. Inst. H. Poincar\'{e} Anal. Non Lin\'{e}aire},
   volume={18},
   date={2001},
   number={1},
   pages={97--133},
}

\bib{DanXu2017}{article}{
   author={R., Danchin\text{,}},
   author={J., Xu\text{,}},
   title={Optimal time-decay estimates for the compressible Navier-Stokes
   equations in the critical $L^p$ framework},
   journal={Arch. Ration. Mech. Anal.},
   volume={224},
   date={2017},
   number={1},
   pages={53--90},
}

\bib{DunSer1985}{article}{
   author={J. E., Dunn\text{,}},
   author={J.., Serrin\text{,}},
   title={On the thermomechanics of interstitial working},
   journal={Arch. Rational Mech. Anal.},
   volume={88},
   date={1985},
   number={2},
   pages={95--133},
}

\bib{HatLi1996}{article}{
   author={H., Hattori\text{,}},
   author={D., Li\text{,}},
   title={Global solutions of a high-dimensional system for Korteweg
   materials},
   journal={J. Math. Anal. Appl.},
   volume={198},
   date={1996},
   number={1},
   pages={84--97},
}

\bib{HatLi1994}{article}{
   author={H., Hattori\text{,}},
   author={D., Li\text{,}},
   title={Solutions for two-dimensional system for materials of Korteweg
   type},
   journal={SIAM J. Math. Anal.},
   volume={25},
   date={1994},
   number={1},
   pages={85--98},
}

\bib{KobTsu2018}{article}{
   author={T., Kobayashi\text{,}},
   author={K., Tsuda\text{,}},
   title={Global existence and time decay estimate of solutions to the compressible two phase flow system under critical condition, preprint},
}

\bib{Kot2014}{article}{
   author={M., Kotschote\text{,}},
   title={Existence and time-asymptotics of global strong solutions to
   dynamic Korteweg models},
   journal={Indiana Univ. Math. J.},
   volume={63},
   date={2014},
   number={1},
   pages={21--51},
}

\bib{TanGao2016}{article}{
   author={T., Tang\text{,}},
   author={H., Gao\text{,}},
   title={On the compressible Navier-Stokes-Korteweg equations},
   journal={Discrete Contin. Dyn. Syst. Ser. B},
   volume={21},
   date={2016},
   number={8},
   pages={2745--2766},
}

\bib{TanZha2014}{article}{
   author={Z., Tan\text{,}},
   author={R., Zhang\text{,}},
   title={Optimal decay rates of the compressible fluid models of Korteweg
   type},
   journal={Z. Angew. Math. Phys.},
   volume={65},
   date={2014},
   number={2},
   pages={279--300},
}

\bib{Tsu2016}{article}{
   author={K., Tsuda\text{,}},
   title={Existence and stability of time periodic solution to the
   compressible Navier-Stokes-Korteweg system on $\mathbb{R}^3$},
   journal={J. Math. Fluid Mech.},
   volume={18},
   date={2016},
   number={1},
   pages={157--185},
}


 \end{biblist}
\end{bibdiv}

\end{document}